\numberwithin{equation}{section}
\newtheorem{theorem}{Theorem}[section]
\newtheorem{lemma}[theorem]{Lemma}
\newtheorem{remark}[theorem]{Remark}
\newtheorem{proposition}[theorem]{Proposition}
\numberwithin{equation}{section}
\newcommand{\te}{\theta}
\newcommand{\al}{\alpha}
\newcommand{\tp}{\tilde{\psi}}
\newcommand{\om}{\omega}
\newcommand{\ml}{\mathcal{L}}
\newcommand{\smu}{\sqrt{\mu}}
\newcommand{\eq}{\sqrt{\frac{\e}{Q}}}
\newcommand{\qe}{\sqrt{\frac{Q}{\e}}}
\newcommand{\hh}{\hslash}
\newcommand{\bu}{\bar{u}}
\newcommand{\bv}{\bar{v}}
\newcommand{\BA}{\bar{A}_s^+}
\newcommand{\Up}{\Upsilon}
\newcommand{\UB}{\bar{U}}
\newcommand{\VB}{\bar{V}}
\newcommand{\MO}{\mathcal{O}}
\newcommand{\Bz}{\Big\z}
\newcommand{\By}{\Big\y}
\newcommand{\bz}{\big\z}
\newcommand{\by}{\big\y}
\newcommand{\TT}{\tilde{\mathcal{T}}}
\newcommand{\dd}{\mathrm{d}}
\newcommand{\e}{\varepsilon}
\newcommand{\T}{\mathcal{T}}
\newcommand{\X}{\mathbb{X}}
\newcommand{\Y}{\mathbb{Y}}
\newcommand{\f}{\mathcal{F}}
\newcommand{\z}{\langle}
\newcommand{\y}{\rangle}
\newcommand{\p}{\partial}
\begin{document}

\title[Prandtl Boundary Layers in An Infinitely Long Convergent Channel]
{Prandtl Boundary Layers in An Infinitely Long Convergent Channel}

\author{Chen GAO}
\address{The Institute of Mathematical Sciences and Department of Mathematics, The Chinese University of Hong Kong, Shatin, NT, Hong Kong}
\email{gaochen@amss.ac.cn}

\author{Zhouping Xin}
\address{The Institute of Mathematical Sciences and Department of Mathematics, The Chinese University of Hong Kong, Shatin, NT, Hong Kong}
\email{zpxin@ims.cuhk.edu.hk}

\begin{abstract}
This paper concerns the large Reynold number limits and asymptotic behaviors of solutions to the 2D steady Navier-Stokes equations in an infinitely long convergent channel. It is shown that for a general convergent infinitely long nozzle whose boundary curves satisfy curvature-decreasing and any given finite negative mass flux, the Prandtl's viscous boundary layer theory holds in the sense that there exists a Navier-Stokes flow with no-slip boundary condition for small viscosity, which is approximated uniformly by the superposition of an Euler flow and a Prandtl flow. Moreover, the singular asymptotic behaviors of the solution to the Navier-Stokes equations near the vertex of the nozzle and at infinity are determined by the given mass flux, which is also important for the constructions of the Prandtl approximation solution due to the possible singularities at the vertex and non-compactness of the nozzle. One of the key ingredients in our analysis is that the curvature-decreasing condition on boundary curves of the convergent nozzle ensures that the limiting inviscid flow is pressure favourable and plays crucial roles in both the Prandtl expansion and the stability analysis.
\end{abstract}
%\date{}
\maketitle

\section{Introduction}\label{sec:intro}
Consider the steady Navier-Stokes equations
\begin{equation}\label{NSE}
\left\{
\begin{aligned}
&u u_r +\frac{vu_{\te}}{r}-\frac{v^2}{r}+p_r-\e\Big(u_{rr}+\frac{u_r}{r}+\frac{u_{\te\te}}{r^2}-\frac{2v_{\te}}{r^2}-\frac{u}{r^2}\Big)=0,\\
&u v_r +\frac{vv_{\te}}{r}+\frac{uv}{r}+\frac{p_{\te}}{r}-\e\Big(v_{rr}+\frac{v_r}{r}+\frac{v_{\te\te}}{r^2}+\frac{2u_{\te}}{r^2}-\frac{v}{r^2}\Big)=0,\\
&u_r+\frac{u}{r}+v_{\te}=0
\end{aligned}
\right.
\end{equation}
in a two dimensional symmetric domain $\Omega=\big\{(r,\te)\big| r>0,\big|\te|<g(r)\big\}$. Here $u(r,\te)$ and $v(r,\te)$ are the radial and angular component of the velocity in polar coordinates respectively. We assume that $g$ is a smooth function satisfying $g(0)=\al$, $g>0$, and $\underset{{r\rightarrow\infty}}{\lim}g(r)=\beta$ with $0<\al\leqslant\beta\leqslant\frac{\pi}{2}$. 

The velocity satisfies the no-slip boundary condition 
\begin{align}\label{BC}
[u,v]\big|_{\Gamma^{\pm}}=0,
\end{align}
where $\Gamma^{\pm}=\big\{(r,\te)\big| r>0,\te=\pm g(r)\big\}$. The velocity is allowed to be singular at the vertex. The mass flux of the flow is prescribed as a negative constant, i.e.
\begin{align}\label{FX}
\int_{\mathcal{C}_{\rho}}u\dd s=-Q,\quad Q>0,
\end{align}
where $\mathcal{C}_{\rho}=\big\{(r,\te)\big|r=\rho,|\te|<g(r)\big\}$. Due to the divergence-free condition, $\int_{\mathcal{C}_{\rho}}u\dd s$ is independent of $\rho$, so
\begin{align}\label{FXF}
\underset{{\rho\rightarrow \infty}}{\lim}\int_{\mathcal{C}_{\rho}}u\dd s=\underset{{\rho\rightarrow0}}{\lim}\int_{\mathcal{C}_{\rho}}u\dd s=-Q.
\end{align}
It will be shown later that the asymptotic behaviors of the solution at the vertex of the nozzle and infinity can be determined by the given mass flux. It is an extremely important problem both physically and theoretically to study the asymptotic behavior as the viscosity approaching zero of the solution to (\ref{NSE}) with the non-slip boundary condition (\ref{BC}) and the flux condition (\ref{FX}) on $\Omega$.

\begin{figure}[h]
\centering
\includegraphics[scale=0.6]{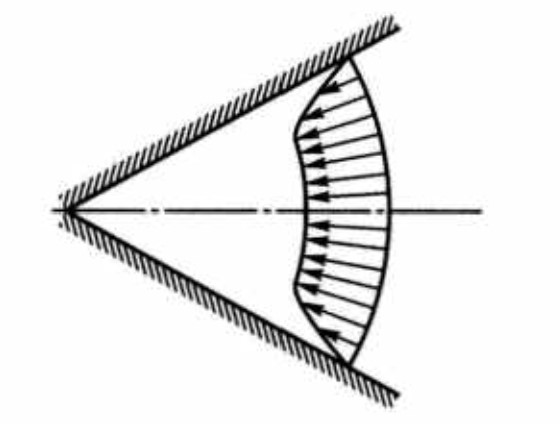}
\caption{Velocity profile in $\Omega_\beta$.}
\label{pic}
\end{figure}

In the special case that $\Gamma^{\pm}$ are two rays, $\Omega_\beta:=\big\{(r,\te)\big||\te|<\beta\big\}$, there have been many physics literature, see \cite{LL} and \cite{SG}. The solution to (\ref{NSE})-(\ref{FX}) is given as
\begin{align}\label{similar}
\left\{
\begin{aligned}
u=&-\frac{Q}{2\beta r}-\frac{Q}{2\beta r}\Big[f'\big(\sqrt{\frac{Q}{2\beta\e}}(\te+\beta)\big)-1\Big]\\
&-\frac{Q}{2\beta r}\Big[f'\big(\sqrt{\frac{Q}{2\beta\e}}(\beta-\te)\big)-1\Big]+\mathcal{O}\Big(\frac{\sqrt{\e Q}}{r}\Big),\\
v=&0,
\end{aligned}
\right.
\end{align}
as $\e\rightarrow 0$. Here $[-\frac{Q}{2\beta r},0]$ is an irrotational Euler flow called a sink. $[-\frac{Q}{2\beta r}f'\big(\sqrt{\frac{Q}{2\beta\e}}(\te+\beta)\big),0]$ is some kind of self-similar solution to the Prandtl equations, and $f$ satisfies
\begin{equation}\label{FSE}
\left\{
\begin{aligned}
&f'''-(f')^2+1=0,\\
&f'(0)=f''(+\infty)=0, \\
&f'(+\infty)=1.
\end{aligned}
\right.
\end{equation}
(\ref{similar}) shows that the solution behaves as the Euler flow in the region away from the boundaries, while as the Prandtl flow close to the boundaries. Thus, the Prandtl's viscous boundary layer theory holds true in this special case. One of the main results of this paper is to show that the similar theory holds true for general convergent nozzles.

Due to the geometry of an infinitely long convergent nozzle, motivated by (\ref{similar}), we need to understand more precisely the asymptotic behaviors of solution to (\ref{NSE})-(\ref{FX}) as $r\rightarrow+\infty$ and $r\rightarrow0$ for general $\Omega$, which can be done formally as follows. Since the arc length of $\mathcal{C}_\rho$ equals to $2\beta \rho$ as $\rho$ goes to $+\infty$ ($2\al\rho$ as $\rho$ goes to 0), the flux condition (\ref{FXF}) indicates the following asymptotic behaviors of the solution:
$$[ru,rv]=\mathcal{O}\big(1\big)\quad\text{ as $r\rightarrow+\infty$ and $r\rightarrow0$.}$$
More precisely, if there exists $[A^{\infty}(\te), B^{\infty}(\te)]\in C^\infty[-\beta,\beta]$ such that
\begin{align}\label{asymb}
\begin{aligned}
&r^k\p_r^k\p_\te^l[ru(r,\te)-A^\infty(\te)]=\mathit{o}(1),\\
&r^k\p_r^k\p_\te^l[rv(r,\te)-B^\infty(\te)]=\mathit{o}(1), \quad \text{ as $r\rightarrow+\infty$,}
\end{aligned}
\end{align}
where $k$ and $l$ are non-negative integers, then (\ref{BC}) implies:
$$A^\infty(-\beta)=A^\infty(\beta)=B^\infty(-\beta)=B^\infty(\beta)=0.$$
It follows from the divergence-free condition and (\ref{asymb}) that
\[
\begin{split}
\p_\te B^\infty&=\p_\te[B^\infty-rv]+r\p_\te v=\p_\te[B^\infty-rv]-r\p_r[ru]\\
               &=\p_\te[B^\infty-rv]-r\p_r[ru-A^\infty]=\mathit{o}(1),\quad\text{as $r\rightarrow +\infty$}.
\end{split}
\]
This, together with $B^\infty(\beta)=0$, implies $B^\infty\equiv 0$, namely, $\underset{r\rightarrow \infty}{\lim}rv=0$. Let $\Phi$ be a streamfunction of $[u,v]$, i.e. $\p_\te\Phi=-ru$, $r\p_r\Phi=rv$. Then 
\[
\underset{r\rightarrow \infty}{\lim}[\p_\te\Phi,r\p_r\Phi]=[-A^\infty,0].
\]
Since $\Phi$ satisfies the Navier-Stokes equations in the following form:
\begin{align}\label{NSPHI}
r^2\Phi_\te\Delta(r\Phi_r)-r^2(r\Phi_r)\Delta\Phi_\te-2r^2\Phi_\te\Delta\Phi+\e r^4\Delta^2\Phi=0,
\end{align}
evaluating the above equation as $r\rightarrow\infty$ shows that $A^\infty$ solves
\begin{equation}\label{INBE}
\left\{
\begin{aligned}
&-2A^\infty\frac{\dd A^\infty}{\dd \te}-\e\Big(\frac{\dd^3 A^\infty}{\dd\te^3}+4\frac{\dd A^\infty}{\dd \te}\Big)=0,\\
&A^\infty(-\beta)=A^\infty(\beta)=0, \\
&\int_{-\beta}^\beta A^\infty(\te)\dd\te=-Q.
\end{aligned}
\right.
\end{equation}
The last integration condition in (\ref{INBE}) follows from the flux condition (\ref{FXF}). It can be shown that if $\e/Q$ is small enough, there is only one solution to (\ref{INBE}) which is non-positive. So the mass flux $-Q$ determines the asymptotic behavior at $+\infty$ completely, i.e.
\[
\underset{r\rightarrow \infty}{\lim}[ru,rv]=[A^\infty,0],
\]
where $A^\infty$ solves system (\ref{INBE}). Moreover, $\big[\frac{A^\infty(\te)}{r},0\big]$ is the solution to (\ref{NSE}) with the boundary condition (\ref{BC}) and the flux condition (\ref{FX}) when $\Omega=\Omega_\beta$, and $\big[\frac{A^\infty(\te)}{r},0\big]$ satisfies (\ref{similar}) for small $\e$. Similarly, $\underset{r\rightarrow 0}{\lim}[ru,rv]=[A^0(\te),0]$, $A^0$ solves the following problem:
\begin{equation}\label{ZEBE}
\left\{
\begin{aligned}
&-2A^0\frac{\dd A^0}{\dd \te}-\e\Big(\frac{\dd^3 A^0}{\dd\te^3}+4\frac{\dd A^0}{\dd \te}\Big)=0,\\
&A^0(-\al)=A^0(\al)=0, \\
&\int_{-\al}^\al A^0(\te)\dd\te=-Q.
\end{aligned}
\right.
\end{equation}
$\big[\frac{A^0(\te)}{r},0\big]$ satisfies (\ref{similar}) if all $\beta$ in (\ref{similar}) is replaced with $\al$.

In this paper, we will show rigorously that if the curvature of $\Gamma^{\pm}$ monotonically decreases concerning $r$, the flux of flow is negative, then for small enough $\e$, the Prandtl boundary layer expansion is valid for the Navier-Stokes flows. Moreover, the singular asymptotic behaviors of the viscous flow at the infinity and the origin are determined by the systems (\ref{INBE}) and (\ref{ZEBE}).

Rigorous justification of the Prandtl's strong boundary layer expansion for the non-slip boundary condition has always been an important challenging problem. There are some important results for steady flows in recent years. Guo and Nguyen first studied the case of a moving plate in \cite{GN}. They considered a Navier-Stokes flow in $(0,L)\times(0,\infty)$ with its horizontal component on the boundary $y=0$ equal to a constant $u_b>0$, and there is a mismatch between $u_b$ and the horizontal component of a shear Euler flow on $y=0$. They showed the validity of the Prandtl expansion when posing some suitable boundary conditions of the remainder. Then Iyer extended this result to a Navier-Stokes flow over a rotating disk in the case the Euler flow is a rotating shear flow in \cite{Iy17}. The case that the Euler flow is a perturbation of a shear flow is considered by Iyer in \cite{Iy19}. In \cite{GI}, Guo and Iyer justified the Prandtl boundary layer expansion in $(0,L)\times(0,\infty)$ for the non-slip boundary condition, i.e. the velocity of the viscous flow equals to $(0,0)$ on $y=0$, with the inviscid Euler flow being a shear flow. And meanwhile, Gerard-Varet and Maekawa in \cite{GM} showed a stability result of the forced steady Navier-Stokes equations for some shear Prandtl type profile in $x$-periodic domain. Later, Gao and Zhang in \cite{GZ21} generalized the result in \cite{GI} to the case of non-shear Euler flows. However, all the mentioned results, \cite{GZ21}-\cite{Iy19}, are considered in a narrow domain, i.e. $L\ll 1$ or the period in $x$ direction is small. For possible large $x$, Iyer studied the case over a moving plate in $(0,\infty)\times(0,\infty)$ in \cite{Iy20}, the horizontal component of the viscous flow on $y=0$ equals to $1-\delta$, $0<\delta\ll1$, and the inviscid flow is the Shear flow $[1,0]$. In the case of no-slip boundary condition, Gao and Zhang in \cite{GZ20} got a result in the domain $(0,L)\times(0,\infty)$ for any positive constant $L$, where the Euler flow is a shear flow and the Prandtl profile is concave in $y$ direction. After that, Iyer and Masmoudi studied the case $L=\infty$ for the shear Euler flow $[1,0]$ and the self-similar Prantdl flow called Blasius flow in \cite{IM20}. And Chen, Wu and Zhang in \cite{CWZ} obtained a stability result for the Prandtl type profile in $x$-periodic domain with a large period under some spectral condition. Fei, Gao, Lin and Tao in \cite{FGLT1} and \cite{FGLT2} considered the problems in disk and annulus. The boundary conditions they are concerned with are the velocity slightly differs from the rigid-rotation. The selection principle of the Euler flow in the zero-viscosity limit seems interesting. Thanks to the Prandtl-Batchelor theory, the vorticity of the Euler flow must be a constant. And this constant is determined by the wood formula.     

For an infinitely long convergent nozzle, the vanishing viscosity problem seems more subtle due to the possible singular asymptotic behaviors of viscous flows as indicated in the special case (\ref{similar}). Besides the usual non-slip boundary condition (\ref{BC}) on the boundaries $\Gamma^\pm$ for the viscous flow, one needs to specify the asymptotic behaviors of viscous solution upstream and downstream at the vertex of the nozzle, which turn to be determined completely by prescribing the max flux as in (\ref{FX}). Indeed, due to the divergence-free conditions and no-penetration condition, (\ref{FX}) implies (\ref{FXF}). Then a formal asymptotic analysis as $r\rightarrow\infty$ and $0$ shows the solution must satisfy the asymptotic behaviors $\underset{r\rightarrow \infty}{\lim}[ru,rv]=[A^\infty,0]$ and $\underset{r\rightarrow 0}{\lim}[ru,rv]=[A^0,0]$. Here $A^\infty$ and $A^0$ are solutions of systems (\ref{INBE}) and (\ref{ZEBE}) with the flux $-Q$, respectively. Thus the mass flux determines the asymptotic behaviors at $r\rightarrow\infty$ and $0$ for the viscous flow.

In addition to the usual difficulties in the analysis for such kind inviscid limits problems, such as singular limits with large sharp changes of gradients near the boundaries due to the possible creation of vorticity, there are at least two substantial new difficulties arising for the case of flows in the infinitely long convergent nozzle in both the constructions of the Prandtl's approximation and the stability analysis. First, since the solution may be singular at the origin and vanishes at infinity (see (\ref{similar})), one should construct an approximate solution with suitable asymptotic behaviors and estimate the remainder in an appropriate space. Second, for the general domain $\Omega$, the inviscid flow is non-shear, which makes the stability analysis of the linearized Navier-Stokes equations much more difficult.
 
To be precise, we write the boundary curves in the Cartesian coordinate system with arc length parameter as
\begin{align}\nonumber
\Gamma^\pm=\Big\{\big(x(s),\pm y(s)\big)\in\mathbb{R}^2\Big|x(s)=r(s)cos\big(g\big(r(s)\big)\big),\hspace{1mm}y(s)=r(s)sin\big(g\big(r(s)\big)\big),\hspace{1mm} s\geqslant0\Big\},
\end{align}
with $\overset{.}{x}^2(s)+\overset{.}{y}^2(s)=1$. $\big(x(s),y(s)\big)$ satisfies
\begin{align}\label{CUR}
\begin{aligned}
&\big(x(0),y(0)\big)=(0,0),\quad (\overset{.}{x}(0),\overset{.}{y}(0))=(\cos\al,\sin\al),\\
&\big(\overset{.}{x}-\cos\beta,\overset{.}{y}-\sin\beta\big)\text{ and its derivatives decay fast to 0 as $s\rightarrow\infty$},
\end{aligned}
\end{align}
where $0<\al\leqslant\beta\leqslant\frac{\pi}{2}$. Set $\kappa(s):=|\frac{\dd}{\dd s}\arctan\big(\frac{\overset{.}{y}(s)}{\overset{.}{x}(s)}\big)|=|\overset{..}{y}(s)\overset{.}{x}(s)-\overset{..}{x}(s)\overset{.}{y}(s)|$ to be the curvature of the $\Gamma^{+}$. We assume that the curvature in the arc length satisfies:
\begin{align}\label{CURC}
\frac{\dd^2}{\dd s^2}\arctan\Big(\frac{\overset{.}{y}(s)}{\overset{.}{x}(s)}\Big)\leqslant0,\text{ for any $s\geqslant0$.}
\end{align}
It follows from (\ref{CUR}) and (\ref{CURC}) that $\kappa=\frac{\dd}{\dd s}\arctan\big(\frac{\overset{.}{y}}{\overset{.}{x}}\big)\geqslant0$, hence the curve $\Gamma^+$ is convex. Therefore (\ref{CURC}) implies that the curvature decreases as $s$ increases, namely, as the arc length increases, the curve becomes flatter. See Figure \ref{cpic} below.
\begin{figure}[h]
\centering
\includegraphics[width=4cm, height=3.9cm]{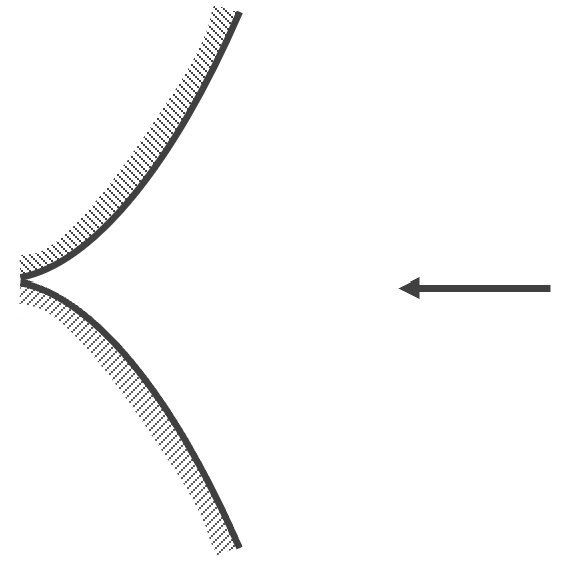}
\caption{Boundary curves with (\ref{CURC}).}
\label{cpic}
\end{figure}

To overcome the difficulties mentioned above, we introduce a new variable $(\xi,\psi)$. First, let $\psi$ be the harmonic function in $\Omega$ satisfying:
\begin{equation}\label{psi}
\left\{
\begin{aligned}
&-\Delta\psi=0\quad\text{ in }\Omega,\\
&\psi|_{\Gamma^+}=1, \hspace{2mm} \psi|_{\Gamma^-}=-1,\\
&\psi|_{r\rightarrow\infty}=\frac{\te}{\beta},\hspace{2mm} \psi|_{r\rightarrow0}=\frac{\te}{\al}.
\end{aligned}
\right.
\end{equation}
By the maximum principle, $\psi_\te$ is bounded and strictly positive in $\bar{\Omega}$. Due to the boundary conditions at the origin and the infinity,
$$\underset{r\rightarrow\infty}{\lim}\psi_\te=\frac{1}{\beta},\hspace{2mm}\underset{r\rightarrow0}{\lim}\psi_\te=\frac{1}{\al}.$$
It will be shown in the next section that $r^k\p_r^k\p_\te^l[\psi-\frac{\te}{\beta}]=\mathcal{O}(\frac{1}{r^\lambda}), \hspace{1mm}\text{for some }\lambda>0$ as $r\rightarrow \infty$. Set
$$
\xi:=\frac{1}{\beta}\ln{r}-\int_{r}^\infty\frac{1}{\rho}\big(\psi_\te(\rho,\te)-\frac{1}{\beta}\big)\dd\rho.
$$
It is easy to see that
\[\xi\sim
\left\{
\begin{split}
&\frac{1}{\beta}\ln{r}, \quad r\rightarrow \infty,\\
&\frac{1}{\al}\ln{r}, \quad r\rightarrow0.
\end{split}
\right.
\]
The symbol `$\sim$' means $\underset{r\rightarrow \infty}{\lim}\frac{\xi}{\frac{1}{\beta}\ln{r}}=1$. Since $\psi$ is harmonic,  the new variable $(\xi,\psi)$ satisfies
$$\p_r\xi=\frac{\p_\te\psi}{r}, \quad \frac{\p_\te\xi}{r}=-\p_r\psi.
$$
So $(\xi,\psi): \hspace{1mm} \Omega\rightarrow (-\infty,\infty)\times(-1,1)$ is a conformal isomorphism. Moreover, $\p_\xi$ and $\p_\psi$ are non-dimensional derivatives, which help us to deal with the asymptotic behaviors of the solution as $r\rightarrow\infty$ and $r\rightarrow0$.

Next we show that a stream-function of the limiting Euler flow can be chosen as $\frac{Q}{2}\psi$. Indeed, denote by $\Phi_e$ the stream-function of the Euler flow with the flux $-Q$. Similar to the viscous flow, there exists a smooth function $A^\infty_e(\te)\in C^\infty[-\beta,\beta]$ such that  
\begin{align}\label{asyE}
\begin{aligned}
&r^k\p_r^k\p_\te^l[-\p_\te\Phi_e(r,\te)-A_e^\infty(\te)]=\mathit{o}(1),\\
&r^k\p_r^k\p_\te^l[r\p_r\Phi_e(r,\te)-0]=\mathit{o}(1), \quad \text{ as $r\rightarrow+\infty$.}
\end{aligned}
\end{align}
If the inviscid flow has no stagnation point in $\Omega$, then there exists a smooth function $F_e$ such that
$$-\p_r^2\Phi_e-\frac{\p_r\Phi_e}{r}-\frac{\p^2_\te\Phi_e}{r^2}=F_e(\Phi_e).
$$
Taking the limit $r\rightarrow\infty$ in the above equation, one gets $F_e\equiv0$, so $\Phi_e$ is harmonic. It follows from (\ref{asyE}) that 
$$
\p_\te A^\infty_e=\p_\te[\p_\te\Phi_e+A_e^\infty]-\p^2_\te\Phi_e=\p_\te[\p_\te\Phi_e+A_e^\infty]+r\p_r[r\p_r\Phi_e]=\mathit{o}(1),
$$   
as $r\rightarrow \infty$. It implies $A^\infty_e\equiv C$. Then the mass flux condition shows $\int_{-\beta}^\beta A_e^\infty\dd\te=-Q$, so $A^\infty_e\equiv-\frac{Q}{2\beta}$. The no-penetration condition of the Euler flow implies $\Phi_e|_{\Gamma^+}$ and $\Phi_e|_{\Gamma^-}$ are two constants, and the mass flux condition shows $\Phi_e|_{\Gamma^+}-\Phi_e|_{\Gamma^-}=Q$. Without loss of generality, one may assume $\Phi_e|_{\Gamma^+}=Q/2$, so $\Phi_e$ solves the following problem:
\begin{equation}\label{psie}
\left\{
\begin{aligned}
&-\Delta\Phi_e=0\quad\text{ in }\Omega,\\
&\Phi_e|_{\Gamma^+}=\frac{Q}{2}, \hspace{2mm} \Phi_e|_{\Gamma^-}=-\frac{Q}{2},\\
&\Phi_e|_{r\rightarrow\infty}=\frac{Q\te}{2\beta},\hspace{2mm} \Phi_e|_{r\rightarrow0}=\frac{Q\te}{2\al},
\end{aligned}
\right.
\end{equation}
namely, $\Phi_e=\frac{Q\psi}{2}$. The asymptotic behavior (\ref{asyE}) implies that the vorticity of the Euler flow is zero upstream, thus, it is expected that the limiting Euler flow is irrotational in the whole region.  

In terms of the variable $(\xi,\psi)\in(-\infty,\infty)\times(-1,1)$, the Navier-Stokes equation (\ref{NSPHI}) for $\Phi$ reads
\begin{equation}\label{NSP}
\left\{
\begin{aligned}
&J\Phi_\psi\p_\xi\big(J\Delta_{\xi,\psi}\Phi\big)-J\Phi_\xi\p_\psi\big(J\Delta_{\xi,\psi}\Phi\big)+\e J\Delta_{\xi,\psi}\big(J\Delta_{\xi,\psi}\Phi\big)=0, \\
&\Phi|_{\psi=1}=\frac{Q}{2}, \quad\Phi|_{\psi=-1}=-\frac{Q}{2},\\
&\Phi_\psi|_{\psi=1}=\Phi_\psi|_{\psi=-1}=0,
\end{aligned}
\right.
\end{equation}
where $J=\frac{\psi_\te^2}{r^2}+\psi_r^2$, and $\Delta_{\xi,\psi}=\p^2_\xi+\p^2_\psi$. The boundary condition (\ref{BC}) is equivalent to $[\Phi_\xi,\Phi_\psi]|_{\psi=\pm1}=0$, so $\Phi|_{\psi=1}$ and $\Phi|_{\psi=-1}$ are two constants independent on $\xi$. The mass flux condition (\ref{FX}) implies $\Phi|_{\psi=1}-\Phi|_{\psi=-1}=Q$, so the boundary conditions in the system (\ref{NSP}) follow. 

The mismatching of the boundary conditions between the corresponding Euler flow and Navier-Stokes flow, i.e. $\p_\psi\Phi_e|_{\psi=\pm1}=\frac{Q}{2}\neq0=\p_\psi\Phi|_{\psi=\pm1}$, gives rise of the viscous boundary layers. To study this, we introduce the following Prandtl variables:
$$
(\xi,\eta)=\big(\xi,\sqrt{\frac{Q}{\e}}(\psi+1)\big), \quad \sqrt{\e Q}\tilde{\Phi}(\xi,\eta)=\Phi(\xi,\psi).
$$
Then the equation (\ref{NSP}) now reads
\begin{align}\label{NSPP}
\begin{aligned}
&J\tilde{\Phi}_\eta\p_\xi\Big(J\big(\frac{\e}{Q}\tilde{\Phi}_{\xi\xi}+\tilde{\Phi}_{\eta\eta}\big)\Big)-J\tilde{\Phi}_\xi\p_\eta\Big(J\big(\frac{\e}{Q}\tilde{\Phi}_{\xi\xi}+\tilde{\Phi}_{\eta\eta}\big)\Big)\\
&+J\Big(\frac{\e}{Q}\p^2_\xi+\p^2_\eta\Big)\Big(J\big(\frac{\e}{Q}\tilde{\Phi}_{\xi\xi}+\tilde{\Phi}_{\eta\eta}\big)\Big)=0.
\end{aligned}
\end{align}
Taking the formal limit $\e\rightarrow0$, one obtains an equation for $\Phi_p$ as:
$$
J(\xi,-1)\Phi_{p\eta}\p_\xi\big(J(\xi,-1)\Phi_{p\eta\eta}\big)-J^2(\xi,-1)\Phi_{p\xi}\Phi_{p\eta\eta\eta}-J^2(\xi,-1)\Phi_{p\eta\eta\eta\eta}=0,
$$
where $(\xi,\eta)\in(-\infty,\infty)\times(0,\infty)$. Since $J$ is positive, the above equation can be rewritten as
$$
\Phi_{p\eta}\Phi_{p\xi\eta\eta}+\frac{J_{\xi}(\xi,-1)}{J(\xi,-1)}\Phi_{p\eta}\Phi_{p\eta\eta}-\Phi_{p\xi}\Phi_{p\eta\eta\eta}+\Phi_{p\eta\eta\eta\eta}=0.
$$
Thus,
$$
\p_\eta\Big(\Phi_{p\eta}\Phi_{p\xi\eta}-\Phi_{p\xi}\Phi_{p\eta\eta}+\frac{1}{2}\frac{J_\xi(\xi,-1)}{J(\xi,-1)}(\Phi_{p\eta})^2+\Phi_{p\eta\eta\eta}\Big)=0.
$$
Therefore, there exists a function $C(\xi)$ depending only on $\xi$ such that
\begin{align}
\Phi_{p\eta}\Phi_{p\xi\eta}-\Phi_{\xi}\Phi_{p\eta\eta}+\frac{1}{2}\frac{J_\xi(\xi,-1)}{J(\xi,-1)}(\Phi_{p\eta})^2+\Phi_{p\eta\eta\eta}+C(\xi)=0.
\end{align}
The boundary conditions of $\Phi_p$ are
$$
\Phi_{p\xi}|_{\eta=0}=\Phi_{p\eta}|_{\eta=0}=0, \quad Q\cdot\Phi_{p\eta}|_{\eta\rightarrow\infty}=\Phi_{e\psi}(\xi,-1)=\frac{Q}{2}.
$$
Set $[u_p,v_p]:=[-\Phi_{p\eta},\Phi_{p\xi}]$. Then we have to solve the following Prandtl's problem in $(-\infty,\infty)\times(0,\infty)$:
\begin{equation}\label{PRA}
\left\{
\begin{aligned}
&u_pu_{p\xi}+v_pu_{p\eta}+\frac{1}{2}\frac{J_\xi(\xi,-1)}{J(\xi,-1)}u^2_p-u_{p\eta\eta}+C(\xi)=0,\\
&u_{p\xi}+v_{p\eta}=0,\\
&u_p|_{\eta=0}=v_p|_{\eta=0}=0,\quad u_p|_{\eta\rightarrow\infty}=-\frac{1}{2},
\end{aligned}
\right.
\end{equation}
with $C(\xi)=-\frac{1}{8}\frac{J_\xi(\xi,-1)}{J(\xi,-1)}$ due to the Bernoulli equation.  

If $u_p$ is negative, $-\xi$ can be regarded as a time variable and $\eta$ as a space variable, and $[u_p,v_p]$ is an entire solution to a parabolic system. We will prove the existence of the entire solution to (\ref{PRA}) under the curvature-decreasing condition (\ref{CURC}). Now the main results can be stated as
\begin{theorem}\label{main} Consider the steady Navier-Stokes system (\ref{NSE}) on the symmetric domain $\Omega$ with the smooth boundaries $\Gamma^\pm$. If $\Gamma^+$ satisfies (\ref{CUR}) and (\ref{CURC}), and the mass flux of the viscous flow is a negative constant $-Q$,    
\noindent then there exists $\delta$ depending only on $\Gamma^\pm$, such that for $0<\e\leqslant\delta Q$, there exists a solution to the Navier-Stokes problem (\ref{NSP}) with no-slip boundary conditions on $\Gamma^\pm$, which satisfies 
\begin{align}\nonumber
\begin{aligned}
&\bigg\|U(\xi,\psi)-Q\Big[-\frac{1}{2}+\Big(u_p\big(\xi,\sqrt{\frac{Q}{\e}}(\psi+1)\big)+\frac{1}{2}\Big)+\Big(u_p\big(\xi,\sqrt{\frac{Q}{\e}}(1-\psi)\big)+\frac{1}{2}\Big)\Big]\bigg\|_{L^\infty}\leqslant C\sqrt{\e Q},\\
&\hspace{5cm}\Big\|V(\xi,\psi)\Big\|_{L^\infty}\leqslant C\sqrt{\e Q},
\end{aligned}
\end{align}
where $[U,V]=[-\Phi_\psi,\Phi_\xi]$, $[u_p,v_p]$ is the solution to the Prandtl problem (\ref{PRA}), and $C$ is a constant independent of $\e$ and $Q$. Moreover, $[U,V]$ has the asymptotic behaviors as $\xi\rightarrow\pm\infty$:
\begin{align}\label{SBX}
\begin{aligned}
&\underset{\xi\rightarrow\infty}{\lim}[U(\xi,\psi),V(\xi,\psi)]=[\beta A^\infty(\beta\psi),0],\\
&\underset{\xi\rightarrow-\infty}{\lim}[U(\xi,\psi),V(\xi,\psi)]=[\al A^0(\al\psi),0],
\end{aligned}
\end{align}
with $A^\infty$ and $A^0$ being the unique non-positive solutions to problems (\ref{INBE}) and (\ref{ZEBE}) respectively. 
\end{theorem}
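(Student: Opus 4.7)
The plan is to follow the standard multi-scale framework, adapted to the infinitely long strip geometry in the conformal variables $(\xi,\psi)$ already introduced. I first construct an approximate velocity $U^{app}$ equal to the composite expression displayed inside the norm in Theorem \ref{main}, obtained by superposing the harmonic Euler flow $\Phi_e = \frac{Q}{2}\psi$ (from (\ref{psie})) with two Prandtl boundary-layer correctors localized near $\psi = \mp 1$ in the stretched variables $\eta^\mp = \sqrt{Q/\e}(\psi\pm 1)$, built from a solution $[u_p,v_p]$ of (\ref{PRA}). Writing the actual Navier--Stokes solution as $\Phi = \Phi^{app} + \sqrt{\e Q}\,\Phi^R$, the task is to solve for the remainder $\Phi^R$ in a weighted energy space on the strip $(-\infty,\infty)\times(-1,1)$ whose norm controls $\|[-\p_\psi\Phi^R,\p_\xi\Phi^R]\|_{L^\infty}$ by a constant independent of $\e$, which then yields the $O(\sqrt{\e Q})$ bound of the theorem.

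Before using $[u_p,v_p]$, I need an entire solution of the Prandtl system (\ref{PRA}) on $(-\infty,\infty)\times(0,\infty)$ with the correct $\xi$-asymptotics. Since $u_p<0$, the system is nonlinear parabolic with $-\xi$ in the role of time, so I solve it on truncated strips $(-N,\infty)\times(0,\infty)$ with the self-similar $\al$-sink Prandtl profile at $\xi=-N$ as initial data, use the curvature-decreasing condition (\ref{CURC}) to show that $J_\xi(\xi,-1)/J(\xi,-1)$ has a favourable sign translating the geometric convexity of $\Gamma^+$ into a favourable pressure gradient in the layer equation, derive uniform weighted-in-$\eta$ bounds on $u_p$ and its derivatives, and pass to $N\to\infty$. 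The asymptotic flatness of $\Gamma^\pm$ near the vertex and at $s=\infty$ pins down $J$ at $\xi=\pm\infty$ and forces $u_p$ to match the $\beta$- and $\al$-sink Prandtl profiles as $\xi\to\pm\infty$ respectively.

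Substituting $\Phi^{app}$ into (\ref{NSP}) produces the forcing $\mathcal{R}^{app}$ of the remainder equation, which splits into an Euler contribution coming from the non-constancy of $J(\xi,\psi)$ (through both the nonlinear coupling and the term $\e J\Delta_{\xi,\psi}(J\Delta_{\xi,\psi}\Phi_e)$), a Prandtl contribution from the discrepancy $J(\xi,\psi)-J(\xi,\mp 1)$ inside each layer (handled by Taylor expansion in $\sqrt{\e/Q}\,\eta$), and an interaction term between the two wall layers, which is exponentially small by the Prandtl decay. Each piece is controlled at order $O(\sqrt{\e Q})$ in a weighted $L^2$ norm carrying $\xi$-decay at $\pm\infty$, so that the forcing is genuinely small.

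The main obstacle is the stability step: closing an energy estimate for the linearized Navier--Stokes operator around $U^{app}$ on the infinite strip. This operator is non-shear in $\xi$ through $J(\xi,\pm 1)$, degenerate at the walls, and the domain is unbounded, so no Stokes-type energy is directly coercive. I plan to adapt the positivity-multiplier technique of Guo--Iyer and Gao--Zhang, testing the vorticity form of the remainder equation against $\Phi^R_\psi/U^{app}_\psi$ weighted by a Prandtl weight in $\eta$ and a slow $\xi$-weight; the favourable pressure property, a direct consequence of (\ref{CURC}) already responsible for the positive term $\tfrac{1}{2}\tfrac{J_\xi(\xi,-1)}{J(\xi,-1)}u_p^2$ in (\ref{PRA}), yields a dissipative contribution at the linearized level that dominates the indefinite commutators from the non-shear background. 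A contraction argument then produces $\Phi^R$ of the required size, giving the bound in Theorem \ref{main}. The asymptotic limits (\ref{SBX}) finally follow because $u_p$ was constructed to converge to the $\al$- and $\beta$-sink profiles at $\xi=\mp\infty$, because the Euler part already has the right asymptotics after conversion to $(r,\te)$, and because the weighted decay of $\Phi^R$ propagates into the full viscous velocity; the limiting profiles agree with the unique non-positive solutions of (\ref{INBE}) and (\ref{ZEBE}) identified formally in the introduction.
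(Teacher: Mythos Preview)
Your outline captures the broad architecture but misses two structural points that the paper identifies as essential, and without them the argument does not close.

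\textbf{The residual of the leading-order approximation does not decay as $\xi\to\pm\infty$.} You assert that each piece of $\mathcal{R}^{app}$ lies in a weighted $L^2$ space ``carrying $\xi$-decay at $\pm\infty$''. This is false already for the exact high-order expansion, and a fortiori for your leading-order composite. When you evaluate the Navier--Stokes operator on $\Phi^{app}$ and send $\xi\to+\infty$, the coefficients $J_\xi/J$, $J_{\xi\xi}/J$ freeze to $-2\beta$, $4\beta^2$ and the residual converges to the nonzero quantity $-2\beta\Phi^{+}_{s\psi}\Phi^{+}_{s\psi\psi}+\e\Phi^{+}_{s\psi\psi\psi\psi}+4\beta^2\e\Phi^{+}_{s\psi\psi}$, where $\Phi^{+}_s=\lim_{\xi\to\infty}\Phi^{app}$; this is exactly the failure of the leading Prandtl profile to solve the limiting ODE (\ref{Ape}) (equivalently (\ref{INBE})). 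Hence $\mathcal{R}^{app}\notin L^2(\mathbb{R}\times(-1,1))$ and your remainder equation has no finite-energy solution. The paper's remedy (Section~5) is to first construct the \emph{exact} non-positive solutions $A^{\pm}$ of the limiting ODEs (\ref{Ape}), (\ref{Ame}) by a separate contraction argument (Lemma~5.2, Proposition~5.1), and then patch the approximate stream function so that $\bar\Phi_s$ agrees with $-\int A^{\pm}$ for large $|\xi|$. Only after this modification does the residual $\bar R_s$ land in $L^2$. This step is also what actually produces the asymptotics (\ref{SBX}): the limits are the exact ODE solutions, not the Prandtl profiles you propose to read off from $u_p$.

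\textbf{A leading-order composite is far too crude to close the contraction.} The linear stability estimate the paper proves (Proposition~6.1) gives $\|\nabla\Pi\|+\sqrt{\e/Q}\,\|\nabla^2\Pi\|\leqslant CQ^{-1}\|F\|$, and recovering $\|\nabla\Pi\|_{L^\infty}$ from the working norm costs a factor $(\e/Q)^{-5/4}$, while the nonlinearity $N(\Pi)$ costs $(\e/Q)^{-5/2}\|\Pi\|^2$. To absorb these losses one needs $\|\bar R_s\|\leqslant C(\e/Q)^{n}Q^2$ with $n$ large; the paper takes $n=12$ and builds the full hierarchy of inviscid correctors $\Phi_e^m$ and linearized-Prandtl correctors $\Phi_b^m$ for $1\leqslant m\leqslant 12$ (Section~4). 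Your single-layer approximation gives at best $(\e/Q)^{1/2}Q^2$, which is nowhere near enough.

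Two smaller points. First, since $-\xi$ is the parabolic time, the well-posed truncation places data at $\xi=X_0\to+\infty$ (the $\beta$-end), not at $\xi=-N$; the $\alpha$-profile is the large-time limit, recovered by damping. Second, the multiplier the paper uses is $G=\Pi/\bar U$ (stream function divided by approximate horizontal velocity), not $\Phi^R_\psi/U^{app}_\psi$; your denominator changes sign in the interior and would not produce the needed positivity.
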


\begin{remark}It is the first result on rigorous justification of the viscous boundary layer theory corresponding to the sink-type inviscid flow to our knowledge (except for the case in a straight nozzle which can be reduced to problem (\ref{INBE})). This result shows the global stability of the matching of the inviscid flow and the Prandtl flow which are both singular at the vertex of the nozzle. Furthermore, the leading Euler flow in $\Omega$ is non-shear (i.e. the velocity depends both on $r$ and $\te$) if the curvature of $\Gamma^\pm$ is not vanishing everywhere (i.e. $\kappa\not\equiv0$).
\end{remark}

\begin{remark}Besides posing the non-slip boundary condition (\ref{BC}) as usual, we prescribe the mass flux condition (\ref{FX}) for the solution to the Navier-Stokes equations in contrast with the case of plate boundary layers in \cite{GZ20}, \cite{GZ21} and \cite{GN}-\cite{IM20}, which prescribe the boundary data on $x=0$ and $x=L$ depending on the viscosity and changing sharply in the $y$-direction. We find that the mass flux $-Q$ completely determines the asymptotic behaviors of the solution at the vertex of the nozzle and infinity. In this sense, the given mass flux condition is consistent with the geometry of the underlying nozzle.
\end{remark}

\begin{remark}Since $\p_\xi\approx\beta r\p_r$, $\p_\psi\approx\beta\p_\te$ as $r\rightarrow\infty$; and $\p_\xi\approx\al r\p_r$, $\p_\psi\approx\al\p_\te$ as $r\rightarrow0$, thus,
(\ref{SBX}) implies
\[
\begin{split}
&[ru(r,\te),rv(r,\te)]\rightarrow[A^\infty(\te),0] \text{ as }r\rightarrow\infty,\\
&[ru(r,\te),rv(r,\te)]\rightarrow[A^0(\te),0] \text{ as }r\rightarrow0,
\end{split}
\]
where $A^\infty$ and $A^0$ are the unique non-positive solutions to (\ref{INBE}) and (\ref{ZEBE}) respectively. Note that 
\[
\begin{split}
&A^\infty(\te)=-\frac{Q}{2\beta }-\frac{Q}{2\beta }\Big[f'\big(\sqrt{\frac{Q}{2\beta\e}}(\te+\beta)\big)-1\Big]-\frac{Q}{2\beta }\Big[f'\big(\sqrt{\frac{Q}{2\beta\e}}(\beta-\te)\big)-1\Big]+\mathcal{O}(\sqrt{\e Q}),\\
&A^0(\te)=-\frac{Q}{2\al }-\frac{Q}{2\al }\Big[f'\big(\sqrt{\frac{Q}{2\al\e}}(\te+\al)\big)-1\Big]-\frac{Q}{2\al }\Big[f'\big(\sqrt{\frac{Q}{2\al\e}}(\al-\te)\big)-1\Big]+\mathcal{O}(\sqrt{\e Q}).
\end{split}
\]
It means that the superposition of the sink flow and its corresponding self-similar Prandtl flow describes the behavoirs of the viscous channel flow both at the vertex and infinity and the viscous flow indeed is singular at the vertex of the nozzle.
\end{remark}

\begin{remark}The curvature-decreasing condition (\ref{CURC}) implies that
\begin{align} \label{FPC}
\frac{1}{2}\frac{J_\xi(\xi,\pm1)}{J(\xi,\pm1)}<0 \text{ for any } \xi\in\mathbb{R}.
\end{align}
(\ref{FPC}) means that the inviscid flow accelerates along the boundaries, equivalently, the pressure is strictly favourable on the boundaries. It guarantees the global well-posedness for the Prandtl equations. It is observed by the physicists that if the curvature of $\Gamma^\pm$ increases too fast, the pressure of the inviscid flow becomes adverse on the boundaries.
\end{remark}

\begin{remark}The leading order inviscid flow (modulo a positive multiplicative constant) is determined by $\Gamma^\pm$. One can generalize the condition (\ref{CURC}) to the curves $\Gamma^\pm$ where (\ref{FPC}) holds true. It covers the case that $\Gamma^\pm$ are perturbations of the straight rays. Precisely, $\Gamma^+=\Big\{\big(x(s),y(s)\big)\Big\}$ satisfies (\ref{CUR}) with $\al,\beta\in (0,\frac{\pi}{2})$, and 
\begin{align}\nonumber
\Big\|\frac{\dd^k}{\dd s^k}\arctan\Big(\frac{\overset{.}{y}}{\overset{.}{x}}\Big)\Big\|_{L^\infty(0,\infty)}\leqslant \delta\ll 1,\text{ for any $1\leqslant k\leqslant M$,}
\end{align}
where $M$ is a positive large integer. Hence, our analysis indicates also that the sink flow is structural stable under suitable perturbation of the nozzle and the viscosity if corrected with some Prandtl's boundary layers.  
\end{remark}

\begin{remark} 
Similar result holds true for any non-symmetric domain enclosed by $\Gamma^\pm$ with $\Gamma^+$ satisfying (\ref{CUR}) and (\ref{CURC}) while  $\Gamma^-=\Big\{\big(x^-(s),y^-(s)\big)\Big\}$ such that 
\begin{align}\nonumber
\begin{aligned}
&\big(x^-(0),y^-(0)\big)=(0,0),\quad (\overset{.}{x}^-(0),\overset{.}{y}^-(0))=(\cos\al^-,\sin\al^-),\\
&\big(\overset{.}{x}^--\cos\beta^-,\overset{.}{y}^--\sin\beta^-\big)\text{ and its derivatives decay fast to 0 as $s\rightarrow\infty$},
\end{aligned}
\end{align}
with $-\frac{\pi}{2}\leqslant\beta^-\leqslant\al^-<0$, and furthermore $\kappa^-=\big|\frac{\dd}{\dd s}\arctan\big(\frac{\overset{.}{y}^-}{\overset{.}{x}^-}\big)\big|$ satisfying
\begin{align}\nonumber
\frac{\dd^2}{\dd s^2}\arctan\Big(\frac{\overset{.}{y}^-}{\overset{.}{x}^-}\Big)\geqslant0 \text{ for any $s\geqslant0$.}
\end{align}
\end{remark}

\begin{remark} 
For positive flux, even in $\Omega_\beta$, i.e. $\Gamma^\pm$ are two rays, the inviscid flow decelerates along the boundaries, so the boundary layer separation will occur (see \cite{LL} and \cite{SG}).   
\end{remark}

To prove the theorem, we construct an approximate solution to the Navier-Stokes equations and then analyze the stability of the linearized Navier-Stokes equations around the approximate solution. 

Due to the geometry of the infinitely long convergent nozzle and possible singularity of both the corresponding inviscid and Prandtl's flows, the construction of the approximate solution is non-trivial. We should first show the favourable pressure condition (\ref{FPC}) for the inviscid flow since it is necessary in general to ensure the global well-posedness of the Prandtl problem (\ref{PRA}). One can prove (\ref{FPC}) by analyzing the flow angle $q:=-\arctan(\frac{\psi_x}{\psi_y})$, where $(x,y)=(r\cos\te,r\sin\te)$ is the original rectangular coordinate, which coincides with $\pm\arctan\big(\frac{\overset{.}{y}}{\overset{.}{x}}\big)$ along the streamlines $\Gamma^\pm$. It will be shown in next section that $\frac{1}{2}\ln J-iq$ is a holomorphic function. Thus, it follows from the Cauchy-Riemann equations that $\frac{1}{2}\frac{J_\xi}{J}=-q_\psi$. We will show that $q_\psi$ is strictly positive by the maximum principle under the assumption of the curvature-decreasing condition (\ref{CURC}). Therefore, $-\frac{1}{2}\frac{J_\xi}{J}$ is strictly positive. Based on this favourable pressure condition, one can obtain the global well-posedness of the Prandtl equations by the method of Oleinik. Furthermore, we will carry out some delicate analysis to obtain the asymptotic behaviors of the solution to the Prandtl equations as $\xi\rightarrow\pm\infty$.

The are two main ingredients in the construction of a high-order approximate solution by Prandtl's expansion. One is the stability analysis of the linearized Prandtl system, which is not standard since it is an entire parabolic system. However, (\ref{FPC}) and the concavity of the Prandtl solution lead to a damping effect in this estimate. The other one is the elaborate analysis of the asymptotic behaviors of the approximate solution at both vertex and upstream. Indeed, it is necessary to modify the approximate solution to ensure that the remainder to vanish as $\xi\rightarrow\pm\infty$ in order to carry out the energy estimates for the remainder. However, the approximate solution constructed by the standard matched asymptotic expansion method does not possess this property. To overcome this difficulty, we need to give some careful stability analysis for the solutions to problems (\ref{INBE}) and (\ref{ZEBE}). One can view it as a special case of the stability of the linearized Navier-Stokes equations. Based on these analyses, we will obtain a high-order approximate solution with the mass flux $-Q$, which has the same asymptotic behaviors as the solution to the Navier-Stokes equations as $\xi\rightarrow\pm\infty$.  

At last, the critical step here is the stability analysis of the linearized Navier-Stokes equations around the approximate solution. We will estimate the quotient of the stream-function to the first component of the velocity for the Prandtl flow, introduced first in \cite{GZ20}. A key idea is to perform an energy estimate for this quantity with a suitable weight to achieve 
\begin{itemize}
\item uniform positivity due to the favourable pressure condition (\ref{FPC});
\item cancellation of terms caused by the non-shear Euler flow;
\item cancellation of terms caused by the normal derivatives of the Prandtl solution near the boundaries.
\end{itemize}
Based on these and the concavity of the Prandtl solution, we will obtain some positive terms in the energy estimate from the convect terms in the linearized Navier-Stokes equations, which leads to a stability estimate. Once the stability estimate holds true, one can establish the main theorem easily by the contraction mapping method.

{\bf Notation.}
For convenience, the following conventions will be used: $$\z\cdot,\cdot\y=\z\cdot,\cdot\y_{L^2_{\xi,\psi}\big(\mathbb{R}\times(-1,1)\big)}, $$ $$\z\cdot,\cdot\y|_{\psi=\pm1}=\z\cdot,\cdot\y_{L^2_\xi\big(\mathbb{R}\times\pm1\big)},$$ $$\|\cdot\|=\|\cdot\|_{L^2_{\xi,\psi}\big(\mathbb{R}\times(-1,1)\big)}$$ and $$\|\cdot\|_{\infty}=\|\cdot\|_{L^\infty_{\xi,\psi}}.$$ 
$a=\MO\big(b\big)$ means that there exists a positive constant $C$ depending only on $\Gamma^\pm$, such that $|a|\leqslant Cb$.

\section{The leading-order Euler flow and favourable pressure condition}\label{Euler-section}
In this section, we discuss the limiting Euler flow and its favorable pressure condition on the boundaries. Since the limiting Euler flow is determined by the curves $\Gamma^\pm$, so we start with the discussion on the properties of the boundary curves. Recall 
\[
\Gamma^+=\Big\{\big(x(s),y(s)\big)\in\mathbb{R}^2\Big|x(s)=r(s)cos\big(g\big(r(s)\big)\big),\hspace{1mm}y(s)=r(s)sin\big(g\big(r(s)\big)\big),\hspace{1mm} s\geqslant0\Big\}.
\]
The following lemma for $g(r)$ holds.
\begin{lemma}\label{g}
Assume that $\Gamma^+$ is a smooth curve satisfying (\ref{CUR}) and (\ref{CURC}). It then holds that
\[
\begin{split}
&g(0)=\al,\quad g(\infty)=\beta,\\
&g'(r)\geqslant0 \text{ for any $r\geqslant0$},
\end{split}
\]
and 
\begin{align}\label{BHG}
\begin{aligned}
&\Big|r^k\frac{\dd^k }{\dd r^k} g(r)\Big|\leqslant \frac{C_k}{r}, \text{ for $r\geqslant1$,}\\
&\Big|r^k\frac{\dd^k }{\dd r^k} g(r)\Big|\leqslant C_kr, \text{ for $r<1$,}
\end{aligned}
\end{align}
where $k$ is any positive integer.
\end{lemma}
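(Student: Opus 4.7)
The plan is to reduce everything to the arc-length formulation. Parametrize $\Gamma^+$ by $s$ and set $\phi(s) := \arctan(\dot{y}(s)/\dot{x}(s))$ for the tangent angle. A direct differentiation of $(x,y) = (r\cos g,\, r\sin g)$ along the curve gives
\[
\dot{r} = \cos(\phi - g(r)), \qquad g'(r)\,\dot{r} = \frac{\sin(\phi - g(r))}{r},
\]
so that $g'(r) = \tan(\phi - g)/r$ whenever $\dot{r}>0$. The limits $g(0)=\alpha$ and $g(\infty)=\beta$ are then immediate from the boundary data in (\ref{CUR}).

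The monotonicity $g'(r)\geqslant 0$ I would extract from a scalar ODE comparison. Setting $u(s) := \phi(s) - g(r(s))$, the identities above give $u(0) = 0$ and
\[
\dot{u} = \kappa(s) - \frac{\sin u}{r},
\]
where (\ref{CURC}) together with $\dot{\phi}(\infty) = 0$ yields $\kappa = \dot{\phi} \geqslant 0$. A short L'H\^opital expansion at $s=0$ (using $r\sim s$) gives $\dot{u}(0) = \kappa(0)/2 \geqslant 0$, so $u \geqslant 0$ near the origin. If there were a first $s_1 > 0$ at which $u$ returned to $0$ from positive values, the ODE would force $\kappa(s_1) = 0$; by (\ref{CURC}) this means $\kappa \equiv 0$ on $[s_1,\infty)$, i.e.\ $\Gamma^+$ continues as a straight ray through the origin on which $u \equiv 0$. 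Either way $u \geqslant 0$, and the bound $u(s) \leqslant \phi(s) - \alpha \leqslant \beta - \alpha \leqslant \pi/2$ keeps $\tan u / r$ finite, proving $g' \geqslant 0$.

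For the bounds (\ref{BHG}) at small $r$, since $\dot{r}(0) = \cos 0 = 1$ the inverse map $s = s(r)$ is smooth at $r = 0$, so $g(r) = \phi(s(r)) - u(s(r))$ is smooth at $r = 0$. Hence every $g^{(k)}(r)$ is bounded in a neighbourhood of the origin and the estimate $|r^k g^{(k)}(r)| \leqslant C_k r^k \leqslant C_k r$ for $r < 1$, $k \geqslant 1$, follows immediately. For large $r$ the decay hypothesis in (\ref{CUR}) gives rapid decay of $\kappa = \dot{\phi}$ and all of its $s$-derivatives; integrating $(ru)' = r\kappa + \text{h.o.t.}$ yields $u(s) = \mathcal{O}(s^{-1})$ with each additional $s$-derivative improving the decay by an extra factor $s^{-1}$. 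Because $\dot{r} \to 1$ forces $r(s) = s + \mathcal{O}(1)$, a chain-rule computation then converts these $s$-decay statements into $|r^k g^{(k)}(r)| \leqslant C_k/r$ for $r \geqslant 1$.

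I expect the main technical difficulty to be the inductive bookkeeping in the large-$r$ regime: each time one differentiates the composite $g(r) = \phi(s(r)) - u(s(r))$, a tree of products of $s$-derivatives of $\phi$, $u$, and $s(r)$ appears, and one must argue by induction on $k$ (via Fa\`a di Bruno's formula) that the cumulative decay rate is exactly $r^{-(k+1)}$. In contrast, the monotonicity assertion and the small-$r$ bounds are essentially direct consequences of the ODE for $u$ and the smoothness of the arc-length parametrization at $s = 0$.
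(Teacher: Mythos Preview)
Your approach is correct but genuinely different from the paper's. The paper works directly with the Cartesian quantity $\dot{y}x-\dot{x}y$: it observes that $rg'(r)=(\dot{y}x-\dot{x}y)/(x\dot{x}+y\dot{y})$, and for monotonicity simply differentiates $h(s):=x-(\dot{x}/\dot{y})y$ to get $\dot{h}=\kappa y/\dot{y}^2\geqslant 0$, hence $h\geqslant h(0)=0$ and $g'\geqslant 0$ in one line. The decay bounds (\ref{BHG}) are then obtained by estimating $\dot{y}x-\dot{x}y$ directly: a Taylor expansion near $s=0$ gives $|\dot{y}x-\dot{x}y|\leqslant Cs^2$, while for large $s$ one writes $\dot{y}x-\dot{x}y=(\dot{y}-\sin\beta)x-(\dot{x}-\cos\beta)y+\sin\beta\,x-\cos\beta\,y$ and uses the fast decay in (\ref{CUR}) to show boundedness. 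Your route via the angle deficit $u=\phi-g$ and its ODE $\dot{u}=\kappa-\sin u/r$ is more geometric and arguably illuminates \emph{why} curvature-decreasing forces $g'\geqslant 0$; the paper's route is more computational but shorter, especially for the monotonicity.

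Two places in your sketch deserve tightening. First, the bound $u\leqslant\phi-\alpha$ you invoke to keep $\tan u$ finite uses $g\geqslant\alpha$, which in turn relies on $g'\geqslant 0$; this is a bootstrap (on any interval where $0\leqslant u<\pi/2$ one has $g'\geqslant 0$, hence $g\geqslant\alpha$, hence $u\leqslant\beta-\alpha<\pi/2$, so the interval extends) and should be stated as such. Second, the identity is exactly $(ru)'=r\kappa+(u\cos u-\sin u)$, and since $u\cos u-\sin u\leqslant 0$ on $[0,\pi/2)$ one gets $(ru)'\leqslant r\kappa$; integrating from $0$ with $ru(0)=0$ gives $ru\leqslant\int_0^\infty r\kappa<\infty$ directly, without first proving $u\to 0$ or treating the cubic term as ``h.o.t.''. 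With these two points made explicit, your argument goes through.
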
   
\begin{proof}
We first prove (\ref{BHG}). Since $\big(\overset{.}{x}(s)-\cos\beta,\overset{.}{y}(s)-\sin\beta\big)$ and its derivatives decay fast to 0 as $s\rightarrow\infty$, thus $\underset{s\rightarrow\infty}{\lim}\kappa(s)=0$. It follows from (\ref{CURC}) that 
$$\kappa(s)=\frac{\dd}{\dd s}\arctan\big(\frac{\overset{.}{y}}{\overset{.}{x}}\big)\geqslant0 \text{ for } s\geqslant0.$$
Set $\tilde{\te}(s)=\arctan\big(\frac{\overset{.}{y}(s)}{\overset{.}{x}(s)}\big)$. Then $\frac{\dd \tilde{\te}}{\dd s}=\kappa\geqslant0$. So $\tilde{\te}$ is a non-decreasing function with $\tilde{\te}(0)=\al$ and $\tilde{\te}(\infty)=\beta$. It follows that
\begin{align}
\begin{aligned}
&0\leqslant \cos\beta\leqslant \overset{.}{x}\leqslant \cos\al,\\
&0< \sin\al\leqslant \overset{.}{y}\leqslant \sin\beta.
\end{aligned}
\end{align}
For $r(s)=\sqrt{x^2(s)+y^2(s)}$, $\frac{\dd r}{\dd s}=\frac{x\overset{.}{x}+y\overset{.}{y}}{r}$. Due to $\overset{.}{x}^2(s)+\overset{.}{y}^2(s)=1$, one has $\frac{\dd r}{\dd s}\leqslant1$, and hence $r(s)\leqslant s$. Since 
\[
\begin{split}
&x(s)\overset{.}{x}(s)=\big(x(s)-x(0)\big)\overset{.}{x}(s)\geqslant 0,\\
&y(s)\overset{.}{y}(s)=\big(y(s)-y(0)\big)\overset{.}{y}(s)\geqslant s\overset{.}{y}(s)\sin\al\geqslant s\sin^2\al,
\end{split}
\]
one has 
$$
\frac{\dd r}{\dd s}\geqslant \frac{y\overset{.}{y}}{r}\geqslant \frac{s\sin^2\al}{r}\geqslant \sin^2\al>0.
$$
It follows that $\sin^2\al\leqslant \frac{\dd r}{\dd s}\leqslant 1$ and $s\sin^2\al \leqslant r\leqslant s$.

Recall $\te(s)=g(r(s))=\arctan(\frac{y(s)}{x(s)})$. Thus 
$$
rg'(r)=r\frac{\dd \te}{\dd s}\big/\frac{\dd r}{\dd s}=\frac{\overset{.}{y}x-\overset{.}{x}y}{x\overset{.}{x}+y\overset{.}{y}}\hspace{1mm}.
$$
For $s\leqslant \sin^{-2}\al$,
\[
\begin{split}
&\overset{.}{y}(s)x(s)-\overset{.}{x}(s)y(s)\\
=&\overset{.}{y}(0)\big[x(s)-s\overset{.}{x}(0)\big]-\overset{.}{x}(0)\big[y(s)-s\overset{.}{y}(0)\big]+\big[\overset{.}{y}(s)-\overset{.}{y}(0)\big]x(s)-\big[\overset{.}{x}(s)-\overset{.}{x}(0)\big]y(s),\\
\end{split}
\]
so $|\overset{.}{y}(s)x(s)-\overset{.}{x}(s)y(s)|\leqslant Cs^2$. It implies
$$
|r g'(r)|\leqslant C r \text{ for } r\leqslant 1.
$$
For $s$ large enough, 
$$
\overset{.}{y}(s)x(s)-\overset{.}{x}(s)y(s)=\big[\overset{.}{y}(s)-\sin\beta\big]x(s)-\big[\overset{.}{x}(s)-\cos\beta]y(s)+\sin\beta\cdot x(s)-\cos\beta\cdot y(s).
$$
Since $\big(\overset{.}{x}(s)-\cos\beta,\overset{.}{y}(s)-\sin\beta\big)$ decays fast to 0 as $s\rightarrow\infty$, one gets that
\[
\begin{split}
\sin\beta\cdot x(s)-\cos\beta\cdot y(s)=\sin\beta\cdot x(1)-\cos\beta\cdot y(1)+\int_{1}^{s}\big[\overset{.}{x}(\tau)\sin\beta-\overset{.}{y}(\tau)\cos\beta\big]\dd \tau
\end{split}
\]
is bounded as $s\rightarrow\infty$. Therefore, $\overset{.}{y}x-\overset{.}{x}y$ is bounded for $s\geqslant 1$, so
$$
|r g'(r)|\leqslant \frac{C}{r} \text{ for } r\geqslant 1.
$$
The estimates on high-order derivatives in (\ref{BHG}) are similar. 

Since $rg'(r)=\frac{\overset{.}{y}x-\overset{.}{x}y}{x\overset{.}{x}+y\overset{.}{y}}$, we write $h(s):=x-\frac{\overset{.}{x}}{\overset{.}{y}}y$, then
$$
\frac{\dd h}{\dd s}=\frac{\kappa y}{\overset{.}{y}^2}\geqslant0,
$$
so $h(s)\geqslant h(0)=0$, which implies $g'\geqslant0$.

Since
$\overset{.}{x}=\overset{.}{r}\Big(\cos\big(g(r)\big)-rg'(r)\sin\big(g(r)\big)\Big)$, $\overset{.}{y}=\overset{.}{r}\Big(\sin\big(g(r)\big)+rg'(r)\cos\big(g(r)\big)\Big)$, thus
$$\frac{\cos\big(g(r)\big)}{\sin\big(g(r)\big)}=\frac{\overset{.}{x}+\overset{.}{r}rg'\cos\big(g(r)\big)}{\overset{.}{y}-\overset{.}{r}rg'\sin\big(g(r)\big)}.
$$
Taking the limit as $s\rightarrow0$ and $s\rightarrow \infty$ yields $g(0)=\al$ and $g(\infty)=\beta$ immediately from (\ref{BHG}).
\end{proof}

Now we show the existences of the Euler flow. Set $\Phi_e=\frac{Q\psi}{2}$, where $\psi$ solves
\begin{equation}\label{psi-Euler}
\left\{
\begin{aligned}
&-\Delta\psi=0\quad\text{ in }\Omega,\\
&\psi|_{\Gamma^+}=1, \hspace{2mm} \psi|_{\Gamma^-}=-1,\\
&\psi|_{r\rightarrow\infty}=\frac{\te}{\beta},\hspace{2mm} \psi|_{r\rightarrow0}=\frac{\te}{\al}.
\end{aligned}
\right.
\end{equation}
Then the following lemma for $\psi$ holds.
\begin{lemma}\label{psi-E-est}
Under the assumption of Lemma \ref{g}, (\ref{psi-Euler}) admits a solution satisfying
\begin{align}\label{NEFE}
\begin{aligned}
&\Big|r^k\p_r^k\p_\te^l\Big[\psi(r,\te)-\frac{\te}{\beta}\Big]\Big|\leqslant\frac{C_{k,l}(\lambda)}{r^\lambda},\text{   for }r\geqslant1,\\
&\Big|r^k\p_r^k\p_\te^l\Big[\psi(r,\te)-\frac{\te}{\al}\Big]\Big|\leqslant C_{k,l}(\lambda)r^\lambda,\text{   for }r\leqslant1,
\end{aligned}
\end{align}
where $\lambda<\frac{1}{2}$ is a positive constant. And there exists a positive constant $b$, such that
\begin{align}\label{lob}
\underset{\Omega}{\inf}\hspace{1mm}\psi_\te\geqslant b>0.
\end{align}
\end{lemma}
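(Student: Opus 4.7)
The plan is to prove the lemma in three steps: existence of $\psi$, the polynomial decay (\ref{NEFE}), and the strict lower bound (\ref{lob}). For existence, I would use the ansatz $\psi_0(r,\theta) := \theta/g(r)$, which lies in $C^\infty(\bar\Omega\setminus\{0\})$ since $g \geq \alpha > 0$, and which satisfies $\psi_0|_{\Gamma^\pm} = \pm 1$ together with the correct asymptotic profiles $\theta/\beta$, $\theta/\alpha$ at the two ends. A direct polar computation gives
$$\Delta\psi_0 = -\frac{\theta g''(r)}{g(r)^2} + \frac{2\theta g'(r)^2}{g(r)^3} - \frac{\theta g'(r)}{r g(r)^2},$$
which is $\MO(r^{-3})$ for $r \geq 1$ and $\MO(r)$ for $r \leq 1$ by Lemma \ref{g}. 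Writing $\psi = \psi_0 + w$, the problem reduces to the Dirichlet problem $-\Delta w = \Delta\psi_0$ with $w|_{\Gamma^\pm} = 0$ and $w \to 0$ at both ends, which I would solve by exhaustion on $\Omega_n := \Omega \cap \{1/n < r < n\}$ (with $w=0$ on the end-arcs) using uniform $H^1$ energy bounds against the integrable source, followed by weak compactness.

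For the polynomial decay (\ref{NEFE}) at infinity, I would study $h := \psi - \theta/\beta$, which is harmonic in $\Omega$ and whose boundary values on $\Gamma^\pm$ are $\pm(1 - g(r)/\beta) = \MO(1/r)$ by Lemma \ref{g}. The key observation is that for $\lambda < 1/2 \leq \pi/(2\beta)$, the explicit harmonic
$$\phi_\lambda(r,\theta) := M r^{-\lambda}\cos(\lambda\theta)$$
is strictly positive on $\{|\theta| < \beta\}$ and decays like $r^{-\lambda}$; choosing $M$ large enough that $\phi_\lambda \geq |h|$ on $\Gamma^\pm \cap \{r \geq R_0\}$ and on the arc $\{r = R_0\}$, the maximum principle on $\Omega \cap \{r > R_0\}$ yields $|h| \leq \phi_\lambda$ there. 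A symmetric argument using $r^\lambda\cos(\lambda\theta)$ handles the region near the origin. The higher-order bounds $|r^k\p_r^k\p_\theta^l h| \leq C_{k,l}(\lambda)r^{-\lambda}$ follow by rescaling $r \mapsto Rr$ on dyadic annuli $\{R/2 < r < 2R\}$ and applying interior Schauder estimates to the rescaled harmonic function, since $r\p_r$ and $\p_\theta$ are scale-invariant.

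For the uniform positivity (\ref{lob}), $\psi_\theta$ is itself harmonic in $\Omega$ because $[\p_\theta,\Delta]=0$ in polar coordinates. From (\ref{NEFE}), $\psi_\theta \to 1/\beta$ as $r \to \infty$ and $\psi_\theta \to 1/\alpha$ as $r \to 0$, uniformly in $\theta$. On $\Gamma^+$, using $\p_s\psi = 0$ and outward unit normal $\mathbf{n} = (-\dot y,\dot x)$, one has $\nabla\psi = (\p_n\psi)\mathbf{n}$; combining with $\p_\theta = -y\p_x + x\p_y$,
$$\psi_\theta\big|_{\Gamma^+} = (\p_n\psi)(x\dot x + y\dot y) = (\p_n\psi)\,r\,\tfrac{dr}{ds},$$
where $\p_n\psi > 0$ by Hopf's lemma (applied to $\psi \leq 1 = \psi|_{\Gamma^+}$) and $\tfrac{dr}{ds} \geq \sin^2\alpha > 0$ from Lemma \ref{g}. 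By symmetry the same holds on $\Gamma^-$. Fixing $R_1 < R_2$ so that (\ref{NEFE}) forces $\psi_\theta \geq 1/(2\alpha)$ on $\{r\leq R_1\}$ and $\psi_\theta \geq 1/(2\beta)$ on $\{r \geq R_2\}$, the minimum principle on the compact region $\Omega \cap \{R_1 \leq r \leq R_2\}$ reduces the lower bound to a continuous, pointwise-positive function on a compact set and delivers the uniform constant $b > 0$.

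The main obstacle is the decay step: existence is a routine Dirichlet problem with integrable data, and the positivity is a clean max/min-principle argument once (\ref{NEFE}) is in hand, but promoting the $L^\infty$ barrier bound on $h$ to the full family of weighted derivative bounds $r^k\p_r^k\p_\theta^l h = \MO(r^{-\lambda})$ requires the dyadic-scaling argument above, with care to use the scale-invariant $r\p_r$ rather than $\p_r$ alone, so that each differentiation preserves rather than degrades the decay exponent.
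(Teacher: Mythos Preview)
Your argument is correct, but it takes a genuinely different route from the paper for the decay estimate (\ref{NEFE}). The paper passes to the logarithmic coordinate $t=\ln r$, subtracts $\theta/g(e^t)$, and runs a weighted $L^2$ energy estimate with weight $\cosh(2\lambda t)$; the restriction $\lambda<\tfrac12$ arises there from the Poincar\'e inequality on the cross-section $(-g,g)$ combined with $g\le\tfrac{\pi}{2}$. Your barrier argument with the explicit harmonics $r^{\mp\lambda}\cos(\lambda\theta)$ is more elementary and actually gives the larger range $\lambda<\pi/(2\beta)$. The trade-off is that the paper's energy method delivers all the weighted Sobolev bounds at once, whereas you need the separate dyadic--Schauder step for derivatives; both are standard.

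Two small points to tighten. First, near the origin $\Delta\psi_0=\MO(1/r)$, not $\MO(r)$; what is $\MO(r)$ is $r^2\Delta\psi_0=(\partial_t^2+\partial_\theta^2)\psi_0$, and it is this quantity that enters the energy bound in $(t,\theta)$ coordinates, so your exhaustion argument should be phrased there (as the paper does) to avoid an apparent non-integrability. Second, the Schauder step must be \emph{boundary} Schauder up to $\Gamma^\pm$, not merely interior, since (\ref{NEFE}) is asserted on $\bar\Omega$; this is fine because after rescaling $r\mapsto Rr'$ the boundary curve $\theta=g(Rr')$ has $C^k$ norms that are $\MO(1/R)$ by Lemma~\ref{g}, and the Dirichlet data $\pm(1-g(Rr')/\beta)$ likewise. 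For (\ref{lob}) the two arguments coincide: Hopf on $\Gamma^\pm$, uniform positive limits $1/\alpha,1/\beta$ at the ends from (\ref{NEFE}), and the minimum principle for the harmonic function $\psi_\theta$.
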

\begin{proof}
In the coordinate $(t,\te)$, where $t=\ln r$, (\ref{psi-Euler}) reads
\begin{equation}\label{maxi}
\left\{
\begin{aligned}
&-\p_t^2\psi-\p_\te^2\psi=0, \quad t\in\mathbb{R},\hspace{1mm}|\te|<g(e^t),\\
&\psi|_{\te=g(e^t)}=1, \hspace{2mm} \psi|_{\te=-g(e^t)}=-1,\\
&\psi|_{t\rightarrow\infty}=\frac{\te}{\beta},\hspace{2mm} \psi|_{t\rightarrow-\infty}=\frac{\te}{\al}.
\end{aligned}
\right.
\end{equation}
Let $\tilde{\psi}=\psi-\frac{\te}{g(e^t)}$. Then $\tp$ solves the following problem
\begin{equation}\label{eqpsi}
\left\{
\begin{aligned}
&-\p_t^2\tp-\p_\te^2\tp=\tilde{f},\quad t\in\mathbb{R},\hspace{1mm}|\te|<g(e^t),\\
&\tp|_{\te=g(e^t)}=0, \hspace{2mm} \tp|_{\te=-g(e^t)}=0,\\
&\tp|_{t\rightarrow\infty}=0,\hspace{2mm} \tp|_{t\rightarrow-\infty}=0,
\end{aligned}
\right.
\end{equation}
where $\tilde{f}=\frac{2\te\big(e^tg'(e^t)\big)^2}{g^3}-\frac{\te e^tg'(e^t)}{g^2}-\frac{\te e^{2t}g''(e^t)}{g^2}$. It follows from (\ref{BHG}) that 
\[
\begin{split}
&|\tilde{f}(t,\te)|\leqslant Ce^{-t} \text{   for }t\geqslant 0,\\
&|\tilde{f}(t,\te)|\leqslant Ce^{t} \text{   for }t\leqslant 0.
\end{split}
\]
Multiplying the first equation in (\ref{eqpsi}) by $\tp \cosh(2\lambda t)$ and integrating in $\Big\{(t,\te)\big||\te|< g(e^t),\hspace{1mm}t\in\mathbb{R}\Big\}$ yield that
\begin{align}\nonumber
\begin{aligned}
&\int_{-\infty}^{\infty}\int_{-g(e^t)}^{g(e^t)}\Big[\big|\p_t\tp\big|^2+\big|\p_\te\tp\big|^2-4\lambda^2\big|\tp\big|^2\Big]\cosh(2\lambda t)\dd \te\dd t\\
=&\int_{-\infty}^{\infty}\int_{-g(e^t)}^{g(e^t)}\tilde{f}\tp \cosh(2\lambda t)\dd \te\dd t.
\end{aligned}
\end{align}
Since $\tp|_{\te=\pm g}=0$, so the Poincar\'{e} inequality gives  
$$
\int_{-g}^g \big|\tp\big|^2 \dd \te\leqslant \big(\frac{2g}{\pi}\big)^2\int_{-g}^g \big|\p_\te\tp\big|^2 \dd \te\leqslant\int_{-g}^g \big|\p_\te\tp\big|^2 \dd \te.
$$
For $\lambda<\frac{1}{2}$, it holds that 
$$
\int_{-\infty}^{\infty}\int_{-g(e^t)}^{g(e^t)}\Big[\big|\p_t\tp\big|^2+\big|\p_\te\tp\big|^2\Big]\cosh(2\lambda t)\dd \te\dd t\leqslant C(\lambda) \int_{-\infty}^{\infty}\int_{-g(e^t)}^{g(e^t)}|\tilde{f}|^2 \cosh(2\lambda t)\dd \te\dd t.
$$
This implies the existences of a solution to (\ref{eqpsi}). Moreover, by a similar argument, one can get
\begin{align}\nonumber
\begin{aligned}
\int_{-\infty}^{\infty}\int_{-g(e^t)}^{g(e^t)}\big|\p^k_t\p^l_\te\tp\big|^2\cosh(2\lambda t)\dd \te\dd t\leqslant C_{k,l}(\lambda)< \infty.
\end{aligned}
\end{align}
Thus, it holds that 
\begin{align}\label{wht}
\big|\p_t^k\p^l_\te \tp(t,\te)\big|\leqslant\frac{C_{k,l}(\lambda)}{\cosh(\lambda t)}, \text{   for }t\in\mathbb{R},\hspace{1mm}|\te|\leqslant g(e^t),
\end{align}
which implies (\ref{NEFE}). 

To obtain (\ref{lob}), we use the maximum principle. Since $\psi$ is harmonic respect to $(t,\te)$, and $\psi$ attains its strictly maximum on $\te=g(e^t)$, and the Hopf's lemma implies 
$$\psi_{\te}(t,g(e^t))>0 \text{  for any } t\in \mathbb{R}.
$$
(\ref{wht}) shows that $\psi_{\te}(t,\te)\rightarrow\frac{1}{\beta}$ uniformly as $t\rightarrow\infty$ and $\psi_{\te}(t,\te)\rightarrow\frac{1}{\al}$ uniformly as $t\rightarrow-\infty$, hence there exists a positive constant $b$ such that $\psi_{\te}(t,g(e^t))\geqslant b$ for any $t\in\mathbb{R}$. Similarly $\psi_{\te}(t,-g(e^t))\geqslant b$. Since $\psi_\te$ is a harmonic function, the maximum principle implies (\ref{lob}). The proof of Lemma \ref{psi-E-est} is completed.
\end{proof}

Recall the definition of $\xi$:
$$\xi=\frac{1}{\beta}\ln{r}-\int_{r}^\infty\frac{1}{\rho}\big(\psi_\te(\rho,\te)-\frac{1}{\beta}\big)\dd\rho.$$ 
Since $\psi$ is a harmonic function satisfying (\ref{NEFE}), it holds that
$$\p_r\xi=\frac{\p_\te\psi}{r}, \quad \frac{\p_\te\xi}{r}=-\p_r\psi.
$$ 
Since
\begin{align}\label{xib}
\xi\sim
\left\{
\begin{aligned}
&\frac{1}{\beta}\ln{r}, \quad r\rightarrow \infty,\\
&\frac{1}{\al}\ln{r}, \quad r\rightarrow0,
\end{aligned}
\right.
\end{align}
and $\underset{\Omega}{\inf}\hspace{1mm}\psi_\te\geqslant b>0,$ it follows that the mapping $(\xi,\psi): \hspace{1mm} \Omega\rightarrow (-\infty,\infty)\times(-1,1)$ is a conformal isomorphism. It is easy to see that $\xi$ and $\psi$ are smooth up to the boundaries $\Gamma^\pm$ except for the origin. 

Set $J=\frac{\psi_\te^2}{r^2}+\psi_r^2$. Then the magnitude of the velocity for the Euler flow is $\frac{Q}{2}J^\frac{1}{2}$. By choosing $\lambda=\frac{1}{4}$ in (\ref{NEFE}), one can obtain the following asymptotic behaviors for $J$ as $\xi\rightarrow\pm\infty$:  
\begin{align}\label{asybJ1}
\begin{aligned}
&\bigg|\p_\xi^k\p_\psi^l\Big[\frac{1}{J}\frac{\p^{j} J}{\p \xi^j}-\big(-2\beta\big)^j\Big]\bigg|\leqslant C_{k,l,j}e^{-\frac{\beta}{4}\xi} \quad\text{for } \xi\geqslant0,\\
&\bigg|\p_\xi^k\p_\psi^l\Big[\frac{1}{J}\frac{\p^{j} J}{\p \xi^j}-\big(-2\al\big)^j\Big]\bigg|\leqslant C_{k,l,j}e^{\frac{\al}{4}\xi} \quad\text{for } \xi<0,
\end{aligned}
\end{align}
\begin{align}\label{asybJ2}
\begin{aligned}
&\bigg|\p_\xi^k\p_\psi^l\Big[\frac{1}{J}\frac{\p^{j} J}{\p \psi^j}\Big]\bigg|\leqslant C_{k,l,j}e^{-\frac{\beta}{4}\xi} \quad\text{for } \xi\geqslant0,\\
&\bigg|\p_\xi^k\p_\psi^l\Big[\frac{1}{J}\frac{\p^{j} J}{\p \psi^j}\Big]\bigg|\leqslant C_{k,l,j}e^{\frac{\al}{4}\xi} \quad\text{for } \xi<0,
\end{aligned}
\end{align}
where $k\geqslant0$, $l\geqslant0$, and $j\geqslant1$ are integers. If choosing $k=0$, $l=0$, and $j=1$ in (\ref{asybJ1}) yields
\begin{align}\label{Jlimit}
\begin{aligned}
\underset{\xi\rightarrow\infty}{\lim}-\frac{1}{2}\frac{J_\xi}{J}=\beta,\quad \underset{\xi\rightarrow-\infty}{\lim}-\frac{1}{2}\frac{J_\xi}{J}=\al,
\end{aligned}
\end{align}
uniformly for $\psi\in[-1,1]$. Next lemma will show that the curvature-decreasing condition (\ref{CURC}) implies the inviscid flow accelerating on the boundaries. 
\begin{lemma}\label{keyE}
If $\Gamma^\pm$ are smooth curves satisfying (\ref{CUR}) and (\ref{CURC}), then there exists a constant $\mu>0$, such that
\begin{align}\label{key1}
-\frac{1}{2}\frac{J_\xi(\xi,\pm1)}{J(\xi,\pm1)}\geqslant \mu  \text{   for any }\xi\in\mathbb{R}.
\end{align}
Moreover, since $\frac{J_\xi}{J}$ is harmonic, it holds that
\begin{align}\label{key2}
\underset{\mathbb{R}\times(-1,1)}{\inf}-\frac{1}{2}\frac{J_\xi(\xi,\psi)}{J(\xi,\psi)}\geqslant \mu.
\end{align}
\end{lemma}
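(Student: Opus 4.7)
The plan is to reduce the problem to a boundary statement via the Cauchy--Riemann equations for the holomorphic function $F(w):=\frac{1}{2}\log J-iq$ (with $w=\xi+i\psi$) established earlier in this section, then prove the boundary positivity using (\ref{CURC}), and finally propagate it to the interior by the maximum principle. The CR equations yield $-\frac{1}{2}J_\xi/J=q_\psi$, so (\ref{key1}) is equivalent to $q_\psi(\xi,\pm 1)\geq \mu>0$. The boundary values of $q$ have a clean geometric form: along $\Gamma^+$ the flow is tangent, so $q(\xi,1)=q^+(\xi):=\arctan(\dot y/\dot x)|_{s=s(\xi)}$, and a direct CR calculation together with $\nabla\psi=\sqrt J\,N$ on $\Gamma^+$ (where $N=(-\dot y,\dot x)$ is the outward unit normal) yields the arclength-to-conformal relation
\[
\frac{d\xi}{ds}=\sqrt{J(\xi,1)}.
\]
Thus $q^+(\xi)$ is smooth, strictly increasing, and bounded between $\alpha$ and $\beta$, with $q^+(-\infty)=\alpha$ and $q^+(+\infty)=\beta$; by antisymmetry one also has $q(\xi,0)=0$ and $q(\xi,-1)=-q^+(\xi)$.

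For the boundary positivity step I would work on the upper half-strip $\mathbb{R}\times(0,1)$ and introduce the auxiliary function $\phi(\xi,\psi):=q(\xi,\psi)-\psi\, q^+(\xi)$. This vanishes on $\psi=0$, on $\psi=1$, and at $\xi=\pm\infty$ (using the asymptotic identities $q\to\beta\psi,\alpha\psi$ and $q^+\to\beta,\alpha$ implicit in (\ref{Jlimit})), and its Laplacian is $\Delta\phi=-\psi\,(q^+)''(\xi)$. Differentiating $q^+(\xi)=\tilde\theta(s(\xi))$ twice and using $d\xi/ds=\sqrt J$ together with $J_\xi=-2Jq_\psi$ produces the identity
\[
(q^+)''(\xi)=\frac{\kappa'(s)}{J(\xi,1)}+\frac{\kappa(s)\,q_\psi(\xi,1)}{\sqrt{J(\xi,1)}},
\]
in which (\ref{CURC}) enters through $\kappa'(s)\leq 0$. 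Once $(q^+)''(\xi)\leq 0$ is available, $\phi$ is subharmonic, the maximum principle gives $\phi\leq 0$ on the half-strip, and the Hopf boundary-point lemma at $\psi=1$ produces $\phi_\psi(\xi,1)\geq 0$, i.e.\ $q_\psi(\xi,1)\geq q^+(\xi)\geq\alpha>0$; the symmetric argument on the lower half handles $\psi=-1$. Finally, since $q_\psi$ is itself harmonic on the strip (as a partial derivative of the harmonic function $q$) and the asymptotic limits $\beta,\alpha$ at $\xi\to\pm\infty$ from (\ref{Jlimit}) are both positive, (\ref{key2}) follows from (\ref{key1}) by a single application of the maximum principle on the closed strip.

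The hard part will be resolving the self-referential structure in the identity for $(q^+)''(\xi)$: although $\kappa'(s)/J\leq 0$, the second term $\kappa\,q_\psi/\sqrt J$ has ambiguous sign, since it involves the very quantity we wish to bound. I would attempt to resolve this either by a continuity/bootstrap argument, first establishing the weaker bound $q_\psi\geq 0$ on $\psi=\pm 1$ through a coarser comparison and then upgrading to strict positivity, or by introducing a more refined barrier than $\psi q^+(\xi)$ that absorbs the $\kappa q_\psi$ term into the boundary geometry. An alternative, potentially cleaner route is to work directly with the boundary speed $V=\frac{Q}{2}\sqrt J$ by exploiting the harmonicity of $\log V$ together with the identities $(\log V)_n|_{\Gamma^+}=\kappa(s)$ and $(\log V)_{sn}|_{\Gamma^+}=\kappa'(s)\leq 0$ in boundary-fitted coordinates, which link (\ref{CURC}) directly to the monotone decay of the boundary speed along $\Gamma^+$ and hence to $q_\psi(\xi,\pm 1)>0$.
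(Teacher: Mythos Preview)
Your setup is correct and coincides with the paper's: the holomorphic function $\tfrac12\log J - iq$, the reduction $-\tfrac12 J_\xi/J=q_\psi$, the boundary identification $q(\xi,1)=\tilde\theta(s(\xi))$, the relation $d\xi/ds=\sqrt{J}$, and the identity
\[
q_{\xi\xi}(\xi,1)=\frac{\kappa'(s)}{J(\xi,1)}+\frac{\kappa(s)}{\sqrt{J(\xi,1)}}\,q_\psi(\xi,1)
\]
all appear in the paper. The difference is in how the circularity you flagged is broken.

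Your barrier $\phi=q-\psi\,q^+(\xi)$ requires $(q^+)''\le 0$ \emph{globally} in $\xi$, and that needs $q_\psi(\xi,1)\ge 0$ for all $\xi$, which is exactly the unknown. The paper avoids this by dropping the barrier entirely and applying the maximum principle to the harmonic function $q_\psi$ itself. Since $q_\psi\to\alpha,\beta>0$ at $\xi\to\mp\infty$, a negative infimum of $q_\psi$ on the closed strip would be attained at some boundary point $(\xi_0,1)$ (or $(\xi_0,-1)$). At that single point the sign of $q_\psi$ is known by hypothesis, so the troublesome term $\kappa\,q_\psi/\sqrt{J}$ has the \emph{favorable} sign, and together with $\kappa'\le 0$ one gets $q_{\psi\psi}(\xi_0,1)=-q_{\xi\xi}(\xi_0,1)\ge -\tfrac{\kappa}{\sqrt J}q_\psi>0$ when $\kappa(\xi_0)>0$, contradicting the Hopf lemma for the minimum of $q_\psi$. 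The degenerate case $\kappa(\xi_0)=0$ is handled separately: then $\kappa\equiv 0$ for $s\ge s(\xi_0)$ by (\ref{CURC}), so $q(\xi_0,1)=\beta=\max q$, forcing $q_\psi(\xi_0,1)\ge 0$. This pointwise-at-the-minimum argument is the missing idea; once $q_\psi\ge 0$ on the boundary is established this way, strict positivity follows from the strong maximum principle and Hopf applied again to $q_\psi$, exactly as in your last paragraph.

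In short: your diagnosis of the obstacle is accurate, but neither of your proposed fixes (bootstrap via an unspecified ``coarser comparison'', or a refined barrier) is carried out, and the clean resolution is to localize the identity to a hypothetical negative boundary minimum of $q_\psi$, where the self-referential term automatically has the right sign.
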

\begin{proof}
Let $(x,y)=(r\cos\te,r\sin\te)$ be the rectangular coordinate. Then $\xi$ and $\psi$ satisify the Cauchy-Riemann equations:
$$\xi_x=\psi_y,\quad \xi_y=-\psi_x.$$
Hence $\xi+i\psi$ is a holomorphic function respect to $x+iy$, so is $\xi_x+i\psi_x$. It is easy to see that $J=\psi_x^2+\psi_y^2=\psi_x^2+\xi_x^2$ and $J>0$ in $\Omega$, and then $\ln(\xi_x+i\psi_x)=\frac{1}{2}\ln J+i\arg (\xi_x+i\psi_x)$ is holomorphic.

Since $\psi$ attains its strictly maximum on $\Gamma^+$, and the outward normal vector of $\Gamma^+$ is $\big(-\overset{.}{y}(s),\overset{.}{x}(s)\big)$ with $\overset{.}{y}(s)> 0$ and $\overset{.}{x}(s)\geqslant 0$, the Hopf's lemma implies $-\psi_x|_{\Gamma^+}>0$ and $\psi_y|_{\Gamma^+}\geqslant0$. Similarly $\psi_x|_{\Gamma^-}>0$ and $\psi_y|_{\Gamma^-}\geqslant0$. It follows from (\ref{NEFE}) that $\psi_y\geqslant0$ as $r\rightarrow 0$ and $\infty$. The strong maximum principle implies $\psi_y>0$ in $\Omega$, hence $q:=\arctan\big(-\frac{\psi_x}{\psi_y}\big)$ is well-defined in $\Omega$. Recall $\tilde{\te}(s)=\arctan(\frac{\overset{.}{y}(s)}{\overset{.}{x}(s)})$. Since $\psi\big(x(s),y(s)\big)\equiv1$, thus $\overset{.}{x}\psi_x+\overset{.}{y}\psi_y=0$. $\overset{.}{y}>0$ and $-\psi_x|_{\Gamma^+}>0$, thus $ \frac{\overset{.}{x}(s)}{\overset{.}{y}(s)}=-\frac{\psi_y}{\psi_x}\big|_{\Gamma^+}$, which implies $q|_{\Gamma^+}=\tilde{\te}$. It is easy to check that $q$ is smooth up to the boundary except for the orgin because $J>0$. According to $\xi_x=\psi_y$, one gets $\ln(\xi_x+i\psi_x)=\frac{1}{2}\ln J+i\arg (\xi_x+i\psi_x)=\frac{1}{2}\ln J-iq$.

Since $(\xi,\psi): \hspace{1mm} \Omega\rightarrow (-\infty,\infty)\times(-1,1)$ is a conformal isomorphism, thus $\frac{1}{2}\ln J-iq$ is also a holomorphic function with respect to $\xi+i\psi$. The Cauchy-Riemann equations show 
$$\frac{1}{2}\frac{J_\xi}{J}=-q_\psi.
$$
So it suffices to prove $q_\psi\geqslant\mu>0$. That $\frac{1}{2}\ln J-iq$ is holomorphic implies that $q$ is harmonic. It follows from (\ref{Jlimit}) that $\underset{\xi\rightarrow\infty}{\lim} q_\psi=\beta$ and $\underset{\xi\rightarrow-\infty}{\lim} q_\psi=\al$ uniformly for $\psi\in[-1,1]$. Therefore $q$ solves the following problem:
\begin{equation}\label{q-equ}
\left\{
\begin{aligned}
&-\p_\xi^2q-\p^2_\psi q=0,\quad(\xi,\psi)\in\mathbb{R}\times(-1,1),\\
&q|_{\psi=\pm1}=\pm\tilde{\te},\\
&q|_{\xi\rightarrow\infty}=\beta\psi,\hspace{2mm} q|_{\xi\rightarrow-\infty}=\al\psi,
\end{aligned}
\right.
\end{equation}
where $\underset{\xi\rightarrow\infty}{\lim}\tilde{\te}=\beta$ and $\underset{\xi\rightarrow-\infty}{\lim}\tilde{\te}=\al$. Since $q|_{\psi=1}=\tilde{\te}$, one can get
$$q_\xi|_{\psi=1}=\frac{\dd \tilde{\te}}{\dd s}\frac{\dd s}{\dd \xi}=\kappa(s)\frac{\dd s}{\dd \xi}.$$
Noticing that on the boundary $\Gamma^+$, $\overset{.}{x}\psi_x+\overset{.}{y}\psi_y=0$, $\psi_x<0$, $\overset{.}{y}>0$ and $\overset{.}{x}^2+\overset{.}{y}^2=1$, one has
\[
\begin{split}
\frac{\dd \xi}{\dd s}&=\frac{\p \xi}{\p x}\overset{.}{x}+\frac{\p \xi}{\p y}\overset{.}{y}=\frac{\p \psi}{\p y}\overset{.}{x}-\frac{\p \psi}{\p x}\overset{.}{y}\\
                     &=\sqrt{\psi_y^2+\psi_x^2}=J^{\frac{1}{2}}.
\end{split}
\]
Thus, $q_\xi|_{\psi=1}=\frac{\kappa}{J^{\frac{1}{2}}}$. Taking the partial derivatives with respect to $\xi$ yields
\[
\begin{split}
q_{\xi\xi}|_{\psi=1}&=\frac{1}{J}\frac{\dd \kappa}{\dd s}+\frac{\kappa}{J^\frac{1}{2}}q_\psi.
\end{split}
\]
Due to $-\p_\xi^2q-\p^2_\psi q=0$, it holds that 
$$q_{\psi\psi}|_{\psi=1}=-\frac{1}{J}\frac{\dd \kappa}{\dd s}-\frac{\kappa}{J^\frac{1}{2}}q_\psi.$$ 
If $q_\psi$ attains its negative minimum on $(\xi_0,1)$, there are two possible cases, one is $\kappa(s(\xi_0))>0$, then (\ref{CURC}) implies $q_{\psi\psi}(\xi_0,1)\geqslant -\frac{K}{J^\frac{1}{2}}q_\psi>0$, which is impossible; the other is $\kappa(s(\xi_0))=0$, where $\frac{\dd \kappa}{\dd s}\leqslant0$ and $\underset{s\rightarrow\infty}{\lim}\kappa(s)=0$ imply $\kappa(s)\equiv0$ for $s\geqslant s(\xi_0)$. Since $q_\xi|_{\psi=1}=\frac{\kappa}{J^{\frac{1}{2}}}$, one gets $q(\xi_0,1)=\underset{\xi\rightarrow\infty}{\lim}q(\xi,1)=\beta$. The maximum principle shows $q\leqslant\beta$, so $q_\psi(\xi_0,1)\geqslant0$, which is a contradiction. Consequently $q_\psi|_{\psi=1}\geqslant0$. Similarly $q_\psi|_{\psi=-1}\geqslant0$. Note that $q_\psi$ is a harmonic function, and $q_\psi|_{\xi\rightarrow\infty}=\beta$, and $q_\psi|_{\xi\rightarrow-\infty}=\al$, by applying the maximum principle for $q_\psi$, one can get $q_\psi\geqslant 0$.

The strong maximum principle shows $q_\psi>0$ for $|\psi|<1$. If there exists $\xi_0$, such that $q_\psi(\xi_0,1)=0$. Applying the Hopf's lemma for $q_\psi$ shows $q_{\psi\psi}(\xi_0,1)<0$, which contradicts to $q_{\psi\psi}(\xi_0,1)=-\frac{1}{J}\frac{\dd \kappa}{\dd s}\geqslant 0$. So we have $q_\psi(\xi,1)>0$ for any $\xi\in\mathbb{R}$. Since $q_\psi|_{\xi\rightarrow\infty}=\beta$, and $q_\psi|_{\xi\rightarrow-\infty}=\al$, there exists a positive constant $\mu\leqslant\min\{\al,\beta\}$, such that $q_\psi(\xi,1)\geqslant \mu$ for any $\xi\in\mathbb{R}$. Same arguments work on $\psi=-1$. Therefore, we can obtain 
$$
\underset{\mathbb{R}\times(-1,1)}{\inf}q_\psi(\xi,\psi)\geqslant \mu,
$$
since $q_\psi$ is harmonic. So the proof is completed.
\end{proof}
\begin{remark}
Since the magnitude of the velocity for the Euler flow is $\frac{Q}{2}J^\frac{1}{2}$ and the $\xi$-direction is opposite to the velocity direction, so (\ref{key1}) means that the limiting Euler flow accelerates strictly on the boundaries, equivalently, the pressure is strictly favourable on the boundaries.
\end{remark}

\section{The existence of the global solution to the Prandtl equations}\label{Prantdl existence}
Now let us establish the global well-posedness of the Prandtl boundary layer problem under the assumption of 
\begin{align}\label{FPC2}
-\frac{1}{2}\frac{J_\xi(\xi,\pm1)}{J(\xi,\pm1)}\geqslant \mu  \text{   for any }\xi\in\mathbb{R}.
\end{align}
The Prandtl problem is posed in $\mathbb{R}\times(0,\infty)$:
\begin{equation}\label{PE}
\left\{
\begin{aligned}
&u_pu_{p\xi}+v_pu_{p\eta}+\frac{1}{2}\frac{J_\xi(\xi,-1)}{J(\xi,-1)}u^2_p-u_{p\eta\eta}-\frac{1}{8}\frac{J_\xi(\xi,-1)}{J(\xi,-1)}=0,\\
&u_{p\xi}+v_{p\eta}=0,\\
&u_p|_{\eta=0}=v_p|_{\eta=0}=0,\quad u_p|_{\eta\rightarrow\infty}=-\frac{1}{2}.
\end{aligned}
\right.
\end{equation}
Recall (\ref{Jlimit}), 
$$\underset{\xi\rightarrow\infty}{\lim}-\frac{1}{2}\frac{J_\xi}{J}=\beta,\quad \underset{\xi\rightarrow-\infty}{\lim}-\frac{1}{2}\frac{J_\xi}{J}=\al.
$$
And choosing $j=1$ in (\ref{asybJ1}) yields
\begin{align}\label{GD}
\begin{aligned}
&\bigg|\p_\xi^k\p_\psi^l\Big[-\frac{1}{2}\frac{J_\xi(\xi,\psi)}{J(\xi,\psi)}-\beta\Big]\bigg|\leqslant C_{k,l}e^{-\frac{\beta}{4}\xi} \quad\text{for } \xi\geqslant0,\\
&\bigg|\p_\xi^k\p_\psi^l\Big[-\frac{1}{2}\frac{J_\xi(\xi,\psi)}{J(\xi,\psi)}-\al\Big]\bigg|\leqslant C_{k,l}e^{\frac{\al}{4}\xi} \quad\text{for } \xi<0,
\end{aligned}
\end{align}
where $k$, $l$ are non-negative integers. Similar to the discussion on the Navier-Stokes equations and the Euler equations, we seek the solution $[u_p,v_p]$ to the Prandtl system satisfying 
\begin{align}\nonumber
&\underset{\xi\rightarrow\pm\infty}{\lim}[u_p(\xi,\eta),v_p(\xi,\eta)]=[A^\pm_p(\eta),B^\pm_p(\eta)].
\end{align}
The divergence-free condition shows $B^+_{p\eta}=-A^+_{p\xi}=0$, thus $B^+_p\equiv0$ follows from the boundary condition $v_p|_{\eta=0}=0$. Evaluating (\ref{PE}) as $\xi\rightarrow\infty$ yields
\begin{equation}\label{PEI}
\left\{
\begin{aligned}
&-\beta \big[A^+_p\big]^2 -\frac{\dd^2 }{\dd \eta^2}A^+_p+\frac{1}{4}\beta=0,\\
&A^+_p|_{\eta=0}=0,\quad A^+_p|_{\eta\rightarrow\infty}=-\frac{1}{2},
\end{aligned}
\right.
\end{equation}
thus $A^+_p(\eta)=-\frac{1}{2}f'\big(\sqrt{\frac{\beta}{2}}\eta\big)$ with $f$ being the solution to (\ref{FSE}). Similarly $\big[A^-_p(\eta),B^-_{p}(\eta)\big]=\Big[-\frac{1}{2}f'\big(\sqrt{\frac{\al}{2}}\eta\big),0\Big]$. 

The problem (\ref{FSE}) can be solved explicitly by 
$$ 
f'(\tilde{\eta})=3\tanh^2\Big(\frac{\tilde{\eta}}{\sqrt{2}}+\mathrm{artanh}\sqrt{\frac{2}{3}}\hspace{1mm}\Big)-2.
$$
It is easy to see $f''(\tilde{\eta})>0$ for $\tilde{\eta}\geqslant0$. $f'(0)=0$ and $f'(\infty)=1$ imply $0\leqslant f'\leqslant 1$. Hence $-f'''=1-(f')^2>0$. It follows  from the explicit expression of $f'$ that $1-f'$ and its derivatives decay exponentially as $\tilde{\eta}$ goes to infinity. The global well-posedness of the Prandtl problem is given in the following proposition.
\begin{proposition} \label{PRAE}
Assume that $J$ satisfies (\ref{FPC2}) and (\ref{GD}). Then the Prandtl problem (\ref{PE}) admits a smooth solution $[u_p,v_p]$ satisfying
\begin{align}\label{prapro}
\begin{aligned}
&-\frac{1}{2}<u_p(\xi,\eta)<0, \quad \text{for } \eta>0,\\
&u_{p\eta}(\xi,\eta)\leqslant 0, \quad u_{p\eta\eta}(\xi,\eta)\geqslant0, \quad \text{for } \eta\geqslant0,\\
&u_{p\eta}(\xi,\eta)\leqslant -m_0<0, \quad\text{for } \eta\leqslant\eta_0,
\end{aligned}
\end{align}
where $m_0$ and $\eta_0$ are positive constants depending on $J$. And 
\begin{align}\label{pinfe}
\begin{aligned}
\Big|\p_\xi^k\p_\eta^l\big[u_p(\xi,\eta)+\frac{1}{2}\big]\Big|\leqslant C_{k,l}e^{-c_0\eta},\quad \text{for } (\xi,\eta)\in\mathbb{R}\times[0,\infty),
\end{aligned}
\end{align}
where $k$, $l$ are non-negative integers and $c_0$ is a positive constant depending on $J$. Moverover, the solution has the asymptotic behaviors as $\xi\rightarrow\pm\infty$:
\begin{align}\label{infe}
&\Big|\p_\xi^k\p_\eta^l\big[u_p(\xi,\eta)-A^+_p(\eta)\big]\Big| \leqslant C_{k,l}e^{-c_0\eta}e^{-\frac{\beta\xi}{8}},\quad \text{for } (\xi,\eta)\in[0,\infty)\times[0,\infty),\\\label{zere}
&\Big|\p_\xi^k\p_\eta^l\big[u_p(\xi,\eta)-A^-_p(\eta)\big]\Big| \leqslant C_{k,l}e^{-c_0\eta}e^{\frac{\al\xi}{8}},\quad \text{for } (\xi,\eta)\in(-\infty,0)\times[0,\infty).
\end{align}
\end{proposition}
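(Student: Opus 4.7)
My plan is to construct $[u_p,v_p]$ as an entire solution of (\ref{PE}) by a truncation-limit argument combined with the classical Crocco transform of Oleinik. Since the expected solution satisfies $u_p<0$, the variable $-\xi$ plays the role of time; one should therefore pose the problem backwards from $\xi=N$ with ``initial'' data $u_p(N,\eta)=A^+_p(\eta)$ (natural because $A^+_p$ is exactly the $\xi\to+\infty$ profile identified through (\ref{PEI})), solve on the truncated strip, and then send $N\to\infty$. Under the Crocco change of variables $w(\xi,u):=u_{p\eta}$ with $u\in(-\tfrac12,0)$ as the new ``space'' variable, (\ref{PE}) becomes a quasilinear degenerate parabolic equation for $w$ with the favourable feature that the coefficient in front of $w_\xi$ is precisely $u<0$. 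Local-in-$\xi$ classical solvability then follows from standard parabolic theory, and the task reduces to obtaining a priori $C^k$ bounds independent of $N$.

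To get the structural properties (\ref{prapro}), I would rely on the maximum principle throughout. The bounds $-\tfrac12<u_p<0$ come from comparing with the constants $0$ and $-\tfrac12$ in (\ref{PE}) using the sign of $\tfrac{J_\xi}{J}$ given by (\ref{FPC2}); the monotonicity $u_{p\eta}\le 0$ is then immediate from $u_p(\xi,0)=0$ and $u_p(\xi,\infty)=-\tfrac12$. For the concavity $u_{p\eta\eta}\ge 0$, I note that at $\eta=0$ the equation collapses to $u_{p\eta\eta}(\xi,0)=-\tfrac18\tfrac{J_\xi(\xi,-1)}{J(\xi,-1)}>0$ by (\ref{FPC2}), while $u_{p\eta\eta}\to 0$ as $\eta\to\infty$; differentiating (\ref{PE}) twice in $\eta$ (or, equivalently, examining $w$ in Crocco coordinates) and applying the maximum principle yields the sign in the interior. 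The strict bound $u_{p\eta}\le -m_0$ on a thin strip $\eta\le\eta_0$ is obtained by comparison with the profiles $A^\pm_p$ near $\eta=0$, using $(A^\pm_p)'(0)<0$ strictly together with the uniform-in-$\xi$ control of $\tfrac{J_\xi}{J}$ afforded by $0<\mu\le -\tfrac12\tfrac{J_\xi}{J}\le C$.

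The global-in-$\xi$ uniform bounds needed to pass $N\to\infty$ are where the favourable pressure (\ref{FPC2}) plays its essential role: a weighted energy (or Crocco-entropy) identity carries the coercive term $-\tfrac12\tfrac{J_\xi}{J}\ge\mu>0$ as a damping that absorbs boundary contributions from the truncation, yielding $N$-independent $C^k$ bounds on compacts; a diagonal Arzel\`a--Ascoli argument then produces a smooth entire solution on $\mathbb{R}\times(0,\infty)$ inheriting all the qualitative properties above. The exponential decay (\ref{pinfe}) as $\eta\to\infty$ is proved by constructing sub- and super-solution barriers of the form $-\tfrac12\pm Ce^{-c_0\eta}$ with $c_0$ sufficiently small; the monotonicity and concavity in $\eta$, together with the uniform-in-$\xi$ lower bound $\mu$, make these barriers effective via a standard comparison argument.

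The asymptotic behaviors (\ref{infe}) and (\ref{zere}) come from studying the differences $d^+:=u_p-A^+_p$ and $d^-:=u_p-A^-_p$, each of which satisfies a linear parabolic-type equation with source controlled by $\bigl|\tfrac{J_\xi(\xi,-1)}{J(\xi,-1)}+2\beta\bigr|$ and $\bigl|\tfrac{J_\xi(\xi,-1)}{J(\xi,-1)}+2\al\bigr|$ respectively; by (\ref{GD}) these sources are exponentially small in $\xi$, so a weighted $L^\infty$ (or weighted $L^2$ in Crocco variables) estimate with weight $e^{\beta\xi/8}$ (resp.\ $e^{-\al\xi/8}$), combined with the $\eta$-barriers of the previous step, closes the decay in both variables simultaneously. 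I expect the main obstacle to be precisely this simultaneous control: because the problem is entire rather than an initial value problem, Oleinik's framework does not apply off-the-shelf, and the favourable pressure condition must be exploited delicately both to secure global existence via uniform truncation estimates and to match the two distinct profiles at $\xi=\pm\infty$ with different rates $\beta$ and $\al$; designing a single weighted functional that couples these two mechanisms while preserving the sign structure of $u_{p\eta}$ and $u_{p\eta\eta}$ is the technically subtle point.
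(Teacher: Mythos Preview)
Your overall strategy matches the paper's closely: truncate at a large value of $\xi$ with initial data given by the $+\infty$ profile, obtain uniform-in-truncation a priori estimates, pass to the limit to produce an entire solution, and then control the asymptotics at $\xi\to\pm\infty$ by studying the differences $u_p-A_p^{\pm}$ with barrier arguments that exploit the favourable pressure condition (\ref{FPC2}) and the decay (\ref{GD}).

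The main technical difference is the change of variables. The paper does not use the Crocco transform but rather the von Mises transform: stream-function coordinate $Z=-\sqrt{2}\int_0^\eta u_p\,\dd\eta'$ and unknown $\omega=(2u_p)^2$, so that (\ref{PE}) becomes the scalar equation $\omega_X-A\omega+\sqrt{\omega}\,\omega_{ZZ}+A=0$ with $A=-J_\xi/J$. The entire argument (Lemmas \ref{Ole}--\ref{Gaon}) is then carried out purely by maximum-principle comparisons with explicit barriers of the form $M e^{-\sqrt{\mu}\,Z}e^{\mp c X}\omega^{\pm}$; no weighted energy or entropy identity is used for the existence step. Both transforms are classical (and both appear in Oleinik's book), so your route is viable; the von Mises choice has the practical advantage that the sign structure $\omega_Z\ge 0$, $\sqrt{\omega}\,\omega_{ZZ}\le 0$ and the decay of $1-\omega$, $|\omega-\omega^{\pm}|$ all fall out of the same barrier machinery, whereas in Crocco variables the degeneracy at both endpoints $u=0,-\tfrac12$ must be handled separately. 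One small slip in your outline: the monotonicity $u_{p\eta}\le 0$ is \emph{not} immediate from the boundary values $u_p|_{\eta=0}=0$ and $u_p|_{\eta\to\infty}=-\tfrac12$ alone; it requires its own maximum-principle step (in the paper this is the argument that $h=\omega_Z\ge 0$, proved by comparing $h$ with a function that blows up at $Z=Z_1$ and letting $Z_1\to\infty$).
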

\begin{remark}
Since $u_p<0$, $-\xi$ can be regarded as a time variable and $\eta$ as a space variable, so the first equation in (\ref{PE}) is parabolic. $A^+_p$ is the initial data for this equation, thus (\ref{infe}) can be expected if $-\frac{1}{2}\frac{J_\xi}{J}-\beta$ decays fast. $A^-_p$ can be regarded as the `large time behavior' of the solution, hence we obtain the estimate (\ref{zere}) due to the effect of the damping term $\frac{1}{2}\frac{J_\xi(\xi,-1)}{J(\xi,-1)}u^2_p$ in the equation.
\end{remark}

The proof of Proposition \ref{PRAE} can be proved by modifying the main idea of Oleinik and Samokhin \cite{Oleinik}. The main ingredients are the estimates (\ref{infe}) and (\ref{zere}) as $\xi\rightarrow\pm\infty$. We use the von Mises variables:
$$X=\xi,\quad Z=-\sqrt{2}\int_0^\eta u_p(\xi,\eta')\dd \eta',
$$ 
with 
$$
u_p=-\frac{1}{\sqrt{2}}\frac{\p Z}{\p \eta},\quad v_p=\frac{1}{\sqrt{2}}\frac{\p Z}{\p \xi},\quad Z(\xi,0)=0,
$$
and a unknown function $\omega=\big(2u_p\big)^2$. Now (\ref{PE}) is reduced to the following problem in $\mathbb{R}\times(0,\infty)$:
\begin{equation}\label{VE}
\left\{
\begin{aligned}
&\frac{\p \om}{\p X}-A\om+\sqrt{\om}\frac{\p^2 \om}{\p Z^2}+A=0,\\
&\om|_{Z=0}=0,\quad \om|_{Z\rightarrow\infty}=1,
\end{aligned}
\right.
\end{equation}
where $A=-\frac{J_\xi(X,-1)}{J(X,-1)}$ is a function depending only on $X$.

Set $\om^+(Z)=\Big(2A^+_p\big(\eta(Z)\big)\Big)^2$, where the relationship between $\eta$ amd $Z$ is given by 
$$
Z(\eta)=-\sqrt{2}\int_0^\eta A_p^+(\eta')\dd\eta'. 
$$
It is easy to check that $\om^+$ satisfies
\begin{equation}\label{VEI}
\left\{
\begin{aligned}
&-2\beta\om^++\sqrt{\om^+}\frac{\dd^2 \om^+}{\dd Z^2}+2\beta=0,\quad Z>0,\\
&\om^+|_{Z=0}=0,\quad \om^+|_{Z\rightarrow\infty}=1,
\end{aligned}
\right.
\end{equation}
and
\begin{align}
\frac{\dd \om^+}{\dd Z}>0, \quad\sqrt{\om^+}\frac{\dd^2 \om^+}{\dd Z^2}<0,\quad \text{for } Z\geqslant0.
\end{align}
Define $\om^-=\Big(2A^-_p\Big)^2$ similarly. Then $\om^-$ satisfies
\begin{equation}\label{VE0}
\left\{
\begin{aligned}
&-2\al\om^-+\sqrt{\om^+}\frac{\dd^2 \om^-}{\dd Z^2}+2\al=0,\quad Z>0,\\
&\om^-|_{Z=0}=0,\quad \om^-|_{Z\rightarrow\infty}=1,
\end{aligned}
\right.
\end{equation}
and
\begin{align}
\frac{\dd \om^-}{\dd Z}>0, \quad\sqrt{\om^-}\frac{\dd^2 \om^-}{\dd Z^2}<0,\quad \text{for } Z\geqslant0.
\end{align}
We will prove that there exists a solution to (\ref{VE}) with the initial data $\om^+$, which converges to $\om^-$ as $X\rightarrow-\infty$. This is achieved by an approximation process as follows. Consider the following problem: 
\begin{equation}\label{VEA}
\left\{
\begin{aligned}
&\frac{\p \om}{\p X}-A\om+\sqrt{\om}\frac{\p^2 \om}{\p Z^2}+A=0,\quad (X,Z)\in(-\infty,X_0)\times(0,\infty),\\
&\om|_{Z=0}=0,\quad \om|_{Z\rightarrow\infty}=1,\\
&\om|_{X=X_0}=\om_0,
\end{aligned}
\right.
\end{equation}
where $\om_0=\Big(f'\big(\sqrt{\frac{A(X_0)}{4}}\eta(Z)\big)\Big)^2$ with $Z(\eta)=\frac{\sqrt{2}}{2}\int_0^\eta f'\big(\sqrt{\frac{A(X_0)}{4}}\eta'\big)\dd\eta'$. $\om_0$ is a non-negative solution to the following problem:
\begin{equation}\label{VEINI}
\left\{
\begin{aligned}
&-A(X_0)\om_0+\sqrt{\om_0}\frac{\dd^2 \om_0}{\dd Z^2}+A(X_0)=0,\quad Z>0,\\
&\om_0|_{Z=0}=0,\quad \om_0|_{Z\rightarrow\infty}=1.
\end{aligned}
\right.
\end{equation}
If $X_0$ is large enough, then $A(X_0)$ is close to $2\beta$, it follows that $\om_0$ is close to $\om^+$. The existences of the global solution to (\ref{VEA}) is studied in the following lemma.
\begin{lemma}\label{Ole}
Assume that $A(X)=-\frac{J_\xi(X,-1)}{J(X,-1)}$ satisfies (\ref{FPC2}) and (\ref{GD}). Then the problem (\ref{VEA}) adimits a solution $\om$ satisfying 
\begin{align}\label{Olee}
\begin{aligned}
&m_1\om^+\leqslant \om\leqslant M_1 \om^+, \quad \text{for } (X,Z)\in(-\infty,X_0]\times[0,\infty),\\
&\frac{\p \om}{\p Z}\geqslant m_2>0,\quad \text{for } 0\leqslant Z\leqslant Z^*,\\
&\Big|\frac{\p \om}{\p Z}\Big|\leqslant M_2,\hspace{2mm}\Big|\sqrt{\om}\frac{\p^2 \om}{\p Z^2}\Big|\leqslant M_3,\quad\text{for } (X,Z)\in(-\infty,X_0]\times[0,\infty),\\
&\Big|\om^{-\frac{3}{4}}\frac{\p \om}{\p X}\Big|\leqslant M_4,\quad \text{for } (X,Z)\in(-\infty,X_0]\times[0,\infty),
\end{aligned}
\end{align}
where the positive constants $m_k$, $M_k$ and $Z^*$ above depend only on $J$. 
\end{lemma}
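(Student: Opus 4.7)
The plan is to prove Lemma \ref{Ole} by adapting the Oleinik--Samokhin scheme to the backward-parabolic problem (\ref{VEA}), viewing $t := X_0 - X$ as the time variable. The program is: first establish local existence on a short backward interval; then use comparison with the explicit stationary profile $\omega^+$ to derive the two-sided barrier $m_1\omega^+ \le \omega \le M_1\omega^+$, which prevents finite-time blow-up and yields the global solution. The favorable pressure condition (\ref{FPC2}), in the form $A(X) \ge 2\mu > 0$, together with the exponential decay (\ref{GD}) of $A - 2\beta$ as $X \to \infty$, is essential for the validity of the barriers and is inherited from the curvature-decreasing hypothesis via Lemma \ref{keyE}.

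For the comparison step, I would substitute $M_1\omega^+(Z)$ into the operator
$$
\mathcal{L}\omega := \omega_X - A\omega + \sqrt{\omega}\,\omega_{ZZ} + A.
$$
Using (\ref{VEI}), $\mathcal{L}(M_1\omega^+)$ reduces to an expression controlled by $A(X) - 2\beta$ and the signs of $\omega^+ - 1$ and $\omega^+_{ZZ}$. Choosing $M_1$ large enough so that $\omega_0 \le M_1\omega^+$ at $X = X_0$, which is possible thanks to (\ref{GD}) and the smoothness of $\omega_0$, one verifies $\mathcal{L}(M_1\omega^+) \ge 0$ and obtains a supersolution; a symmetric argument produces the subsolution $m_1\omega^+$. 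The standard parabolic comparison principle, applied after a regularization $\sqrt{\omega + \delta}$ to remove the degeneracy and followed by the limit $\delta \to 0$, yields the sandwich estimate on any interval of existence, hence globally in $X$.

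For the one-sided lower bound $\omega_Z \ge m_2 > 0$ on $[0, Z^*]$, I would combine $\omega \ge m_1\omega^+$ with a Hopf-type barrier at $Z = 0$: since $\omega(X,0) = 0$ and $\omega^+_Z(0) > 0$, the linear profile $cZ$ is a local subsolution near $Z = 0$ for suitable small $c$, which forces a uniform positive lower bound on $\omega_Z|_{Z=0}$; continuity then propagates this bound to a strip $[0, Z^*]$ independent of $X$. The bounds $|\omega_Z| \le M_2$ and $|\sqrt{\omega}\,\omega_{ZZ}| \le M_3$ are Bernstein-type: differentiating the equation once in $Z$, multiplying by suitable cutoffs and applying the maximum principle to $(\omega_Z)^2$ exploits the damping from $-A\omega$ to absorb lower-order terms, and a further differentiation combined with the weight $\omega$ handles the second derivative.

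The step I expect to be the main obstacle is the estimate $|\omega^{-3/4}\omega_X| \le M_4$, which asks for a uniform bound on a quantity that is formally singular where $\omega$ vanishes. Using the equation we may rewrite
$$
\omega^{-3/4}\omega_X = A\omega^{1/4} - \omega^{-1/4}\omega_{ZZ} - A\omega^{-3/4},
$$
so the singular contributions must conspire to cancel through the boundary relation $\omega_X|_{Z=0} = 0$ and the local structure of $\omega$ near $Z = 0$ inherited from $\omega^+$. Following Oleinik's scheme, I would derive the parabolic equation satisfied by $\Theta := \omega^{-3/4}\omega_X$, check the boundedness of $\Theta$ at the initial slice $X = X_0$ using that $\omega_0$ exactly solves (\ref{VEINI}) with coefficient $A(X_0)$ and the compatibility $|A(X_0) - 2\beta| \le Ce^{-\beta X_0/4}$, and then apply the maximum principle to $\Theta$, exploiting $A(X) \ge 2\mu > 0$ and the lower bound $\omega_Z \ge m_2$ near $Z = 0$ to close the estimate.
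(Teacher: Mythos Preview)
Your proposal is correct in outline and follows the same Oleinik--Samokhin scheme as the paper, which also only sketches the argument and defers to \cite{Oleinik}. The barrier comparison with $m_1\omega^+,\,M_1\omega^+$, the maximum principle for $h=\omega_Z$, and your quantity $\Theta=\omega^{-3/4}\omega_X$ (which is, up to a factor, exactly the paper's $\tilde r=(\omega^{1/4})_X$) all match.

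Two places where the paper's technical route differs slightly from your sketch are worth flagging. First, for the strip lower bound $\omega_Z\ge m_2$ on $[0,Z^*]$, the paper does \emph{not} use an independent Hopf-type barrier; instead it first obtains a one-sided bound $r=\omega_X\le M$ via the maximum principle, deduces $\sqrt{\omega}\,\omega_{ZZ}\ge -M'$ from the equation, and then integrates $\omega_{ZZ}\ge -M'/\sqrt{\omega}$ from $Z=0$ to recover $\omega_Z\ge \omega_Z|_{Z=0}-CM'\sqrt{Z}$. So the order is reversed from what you propose: the $\omega_X$ bound precedes, and is used to obtain, the positivity of $\omega_Z$ in a strip. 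Second, for the upper bound on $\sqrt{\omega}\,\omega_{ZZ}$ near $Z=0$, a generic Bernstein argument on $(\omega_Z)^2$ is delicate because of the degeneracy; the paper (following Oleinik) instead introduces the quotient $\tilde q=\sqrt{\omega}\,\omega_{ZZ}/(h-m')$, derives a scalar equation for $\tilde q$ whose zeroth-order coefficient has a good sign at an interior maximum, and closes by a maximum-principle alternative. If you intend to carry out your Bernstein variant, you should check carefully that the degenerate coefficient $\sqrt{\omega}$ does not spoil the argument at $Z=0$; the quotient trick is the standard way around this.
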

\begin{proof}
The proof follows the same arguments of Oleinik and Samokhin in \cite{Oleinik}, which is just sketched here. Since $\omega^+$ is a solution to (\ref{VEI}) with $\frac{\dd^2 \om^+}{\dd Z}<0$, by the maximum principle, one can show $m_1\om^+\leqslant \om\leqslant M_1 \om^+$ for $m_1>0$ small enough and $M_1>0$ large enough. $m_1\om^+\leqslant \om$ implies $\frac{\p \om}{\p Z}|_{Z=0}\geqslant m_1 \frac{\dd \om^+}{\dd Z}|_{Z=0}>0$.

Set $h:=\frac{\p \om}{\p Z}$. Then 
\[
\frac{\p h}{\p X}+\sqrt{\om}\frac{\p^2 h}{\p Z^2}-Ah+\frac{h}{2\sqrt{\om}}h_Z=0,
\]
so $\frac{\p \om}{\p Z}$ is bounded by the maximum principle. 

Rewrite the equation for $\om$ in the divergence form:
\[
\frac{\p \om}{\p X}+\frac{\p}{\p Z}\big[\sqrt{\om}\frac{\p \om}{\p Z}\big]-\frac{h}{2\sqrt{\om}}\frac{\p \om}{\p Z}-A\om+A=0.
\]
Since $\sqrt{\om}$ is bounded below and $\frac{h}{2\sqrt{\om}}$ is bounded in the region $Z\geqslant Z_*$ for any $Z_*>0$, by the standard Nash-Moser-De Giorgi theory, there exists a $\gamma\in(0,1)$, such that $\om\in C^\gamma\big((-\infty,X_0]\times[Z_*,\infty)\big)$. Furthermore, by the Schauder theory for parabolic equations, $\|\om\|_{C^{2,\gamma}\big((-\infty,X_0]\times[Z_*,\infty)\big)}\leqslant M(Z_*)$. Thus, it suffices to estimate $\frac{\p \om}{\p X}$ and $\sqrt{\om}\frac{\p^2 \om}{\p Z^2}$ in the region $0\leqslant Z\leqslant Z_*$ for $Z_*$ small enough.

Since $r:=\frac{\p \om}{\p X}$ satisfies 
\[
\frac{\p r}{\p X}+\sqrt{\om}\frac{\p^2 r}{\p Z^2}-\frac{A}{2\om}r-\frac{3A}{2}r=\frac{r^2}{2\om}-A_X\big(1-\om\big)\geqslant-A_X\big(1-\om\big),
\]
by the maximum principle, $r\leqslant M$ for $M$ large enough. It follows from (\ref{VEA}) that $\sqrt{\om}\frac{\p ^2 \om}{\p Z^2}\geqslant -M'$ for large $M'$. And $\frac{\p  \om}{\p Z}-\frac{\p  \om}{\p Z}\big|_{Z=0}=\int_0^Z \frac{\p ^2 \om}{\p Z^2}\geqslant -M'\sqrt{Z}$ implies that $\frac{\p  \om}{\p Z}$ is strictly positive when $Z$ is small.

Set $\tilde{q}=\sqrt{\om}\frac{\p^2 \om}{\p Z^2}/(h-m')$, where $m'$ is a small positive constant. Then $\tilde{q}$ satisfies the following equation in $(-\infty,X_0)\times(0, Z^*)$ for small $Z^*$:
\begin{align}\nonumber
\begin{aligned}
&(h-m')\frac{\p \tilde{q}}{\p X}+(h-m')\sqrt{\om}\frac{\p^2 \tilde{q}}{\p Z^2}+2\sqrt{\om}h_Z\frac{\p \tilde{q}}{\p Z}\\
&-\frac{1}{2\om}\big(m'(h-m')\tilde{q}-2m'A\om-A(h-m')(1-\om)\big)\tilde{q}=0.
\end{aligned}
\end{align}
If $\tilde{q}$ does not attain its maximum on the parabolic boundary of $(-\infty,X_0)\times(0, Z^*)$, then at the maximum point, it holds, 
$$\big(m'(h-m')\tilde{q}-2m'A\om-A(h-m')(1-\om)\big)\tilde{q}\leqslant 0.$$
It yields $\sqrt{\om}\frac{\p ^2 \om}{\p Z^2}\leqslant M''$ in  $(-\infty,X_0)\times(0, Z^*)$. Due to (\ref{VEA}), it holds that $\frac{\p \om}{\p X}\geqslant M'''$ in  $(-\infty,X_0)\times(0, Z^*)$. 

Set $\tilde{g}=\om^{\frac{1}{4}}$ and $\tilde{r}=\frac{\p \tilde{g}}{\p X}$. Then 
\begin{align}\nonumber
\begin{aligned}
&4\tilde{g}^3\frac{\p \tilde{r}}{\p X}+4\tilde{g}^5\frac{\p^2 \tilde{r}}{\p Z^2}+24\tilde{g}^4\tilde{g}_Z\frac{\p \tilde{r}}{\p Z}+\big(-8\tilde{g}^2\tilde{r}+A\tilde{g}^3-5A/\tilde{g}-12\tilde{g}^3\tilde{g}_Z^2\big)\tilde{r}=A_X\tilde{g}^4-A_X.
\end{aligned}
\end{align}
The coefficient of $\tilde{r}$ in the equation above is 
\begin{align}\nonumber
\begin{aligned}
&-8\tilde{g}^2\tilde{r}+A\tilde{g}^3-5A/\tilde{g}-12\tilde{g}^3\tilde{g}_Z^2\\
=&\om^{-\frac{1}{4}}\big(-2\om_X+A\om-5A-\frac{3}{4}\om^{-\frac{1}{2}}h^2\big)\\
\leqslant& -m''\om^{-\frac{3}{4}}
\end{aligned}
\end{align}
for small $Z$, hence by the maximum principle, $\tilde{r}$ is bounded when $Z$ is small. 
\end{proof}

Base on (\ref{Olee}), we will obtain some estimates on $\om-\om^+$ for large $X_0$, and then take the limit $X_0\rightarrow\infty$. In fact, we have the following lemma.
\begin{lemma}\label{Gaoi}
Assume that $A(X)=-\frac{J_\xi(X,-1)}{J(X,-1)}$ satisfies (\ref{FPC2}) and (\ref{GD}). Then it holds in $(-\infty,X_0]\times[0,\infty)$ that
\begin{align}\label{Olee2}
\begin{aligned}
&\frac{\p \om}{\p Z}\geqslant0,\hspace{2mm}-\sqrt{\om}\frac{\p^2 \om}{\p Z^2}\geqslant0,\\
&\Big|\om^{-1}\frac{\p \om}{\p X}\Big|\leqslant M_5,\\
&0\leqslant1-\om\leqslant M_6e^{-\smu Z},\hspace{2mm} \frac{\p \om}{\p Z}\leqslant M_7e^{-\smu Z},\\
&\Big|\frac{\p \om}{\p X}\Big|\leqslant M_8e^{-\smu Z},\hspace{2mm} -\sqrt{\om}\frac{\p^2 \om}{\p Z^2}\leqslant M_9e^{-\smu Z},
\end{aligned}
\end{align}
and there exists a constant $M_{10}$ such that it holds in $[M_{10},X_0]\times[0,\infty)$,
\begin{align}\label{Gaoe}
\begin{aligned}
&\big| \om-\om_0\big|\leqslant M_{11} e^{-\smu Z}e^{-\frac{\beta X}{8}}\om_0,\hspace{2mm}\Big| \frac{\p \om}{\p X}\Big|\leqslant M_{12} e^{-\smu Z}e^{-\frac{\beta X}{8}}\om_0,\\
&\Big| \frac{\p \om}{\p Z}-\frac{\p \om_0}{\p Z}\Big|\leqslant M_{13} e^{-\smu Z}e^{-\frac{\beta X}{8}},\\
&\Big|\sqrt{\om_0}\Big[\frac{\p^2\om}{\p Z^2}-\frac{\p^2 \om_0}{\p Z^2}\Big]\Big|\leqslant M_{14} e^{-\smu Z}e^{-\frac{\beta X}{8}},
\end{aligned}
\end{align}
where the above constants $M_k$ depend only on $J$. 
\end{lemma}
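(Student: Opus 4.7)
The plan is to erect the full suite of estimates in layers, each resting on the previous and all powered by the favourable pressure condition $A(X)=-J_\xi(X,-1)/J(X,-1)\geqslant 2\mu>0$ of (\ref{FPC2}). First I will fix the monotonicity claims $\p_Z\omega\geqslant 0$ and $-\sqrt{\omega}\,\p_Z^2\omega\geqslant 0$. Evaluating (\ref{VEA}) on the degenerate boundary $Z=0$ (where $\omega=0$) forces $\sqrt{\omega}\,\p_Z^2\omega|_{Z=0}=-A(X)<0$; together with $\p_Z\omega|_{Z=0}\geqslant m_2>0$ from Lemma \ref{Ole}, the analogous signs for $\omega_0$ (from its explicit $f'$-profile), and the vanishing at $Z\to\infty$, differentiating the equation and reading it in reversed time $\tau=-X$ as linear parabolic equations for $h=\p_Z\omega$ and for $\p_Z^2\omega$ delivers the two sign statements by the maximum principle.

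Next, I will extract the exponential decay in $Z$. The function $W=1-\omega$ satisfies the damped equation
\begin{equation}
\p_\tau W-\sqrt{\omega}\,\p_Z^2 W+A(X)\,W=0,\qquad \tau=-X,
\end{equation}
and since $\omega\leqslant 1$ (itself a simple maximum principle consequence), the function $Ce^{-\smu Z}$ is a supersolution because
\begin{equation}
-\sqrt{\omega}\,\mu\, Ce^{-\smu Z}+A\,Ce^{-\smu Z}=(A-\sqrt{\omega}\,\mu)Ce^{-\smu Z}\geqslant \mu Ce^{-\smu Z}\geqslant 0.
\end{equation}
The initial datum $1-\omega_0$ decays exponentially in $Z$ (as $1-f'$ does, from the explicit tanh formula), so $C$ may be chosen independent of $X_0$. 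Comparison yields $1-\omega\leqslant M_6 e^{-\smu Z}$, and differentiating the equation once in $Z$ or $X$ and rerunning the barrier argument on the resulting linear equations supplies the matching bounds on $\p_Z\omega$, $|\p_X\omega|$, and $|\sqrt{\omega}\,\p_Z^2\omega|$.

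The upgrade from $|\omega^{-3/4}\p_X\omega|\leqslant M_4$ (Lemma \ref{Ole}) to $|\omega^{-1}\p_X\omega|\leqslant M_5$ exploits that (\ref{VEA}) at $Z=0$ forces $\p_X\omega|_{Z=0}=0$: a Taylor expansion combined with $\omega\geqslant m_1\omega^+\geqslant c' Z$ recovers a full power of $\omega$ in the denominator near $Z=0$, while the Schauder and exponential bounds already obtained handle $Z$ bounded away from $0$. Finally, for the convergence estimates of (\ref{Gaoe}) I set $\tilde W=\omega-\omega_0$ and subtract (\ref{VEINI}) from (\ref{VEA}); $\tilde W$ satisfies a linear parabolic equation with source
\begin{equation}
F=\bigl(A(X)-A(X_0)\bigr)(1-\omega_0)+\bigl[\sqrt{\omega}-\sqrt{\omega_0}\bigr]\,\p_Z^2\omega_0,
\end{equation}
and (\ref{GD}) gives $|A(X)-A(X_0)|\leqslant Ce^{-\beta X/4}$. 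Choosing $M_{10}$ large so that this dominates lower-order residuals, the weight $e^{-\beta X/8}e^{-\smu Z}\omega_0$ is a supersolution, yielding $|\tilde W|\leqslant M_{11} e^{-\beta X/8}e^{-\smu Z}\omega_0$; differentiation of the equation in $X$ and $Z$ and rerunning the argument on the derived equations supplies the remaining three bounds in (\ref{Gaoe}). The main obstacle is coordinating the two degeneracies at $Z=0$ (vanishing of the diffusion coefficient $\sqrt{\omega}$ and vanishing of the weight $\omega_0$) so that $\tilde W$ and $\p_X\omega$ vanish at $Z=0$ at precisely the order needed for the weighted estimates to close; this forces the barriers to match the $Z$-profile of $\omega_0$ near the boundary, and only the damping supplied by $A\geqslant 2\mu$ makes the comparison arguments close uniformly in $X_0$.
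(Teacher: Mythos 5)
Your overall strategy — sign estimates first, then exponential decay in $Z$ via supersolutions, then convergence to $\omega_0$ by comparison — matches the paper's, and the supersolution computation for $1-\omega$ against $Ce^{-\sqrt{\mu}Z}$ is correct. Two steps have genuine gaps, however.

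First, to upgrade $|\omega^{-3/4}\p_X\omega|\leqslant M_4$ to $|\omega^{-1}\p_X\omega|\leqslant M_5$ you invoke a Taylor expansion at $Z=0$ using $\p_X\omega|_{Z=0}=0$ and $\omega\geqslant c'Z$. That would require a uniform bound on $\p_Z\p_X\omega$ up to the degenerate boundary $Z=0$, which is not available: the Schauder bounds obtained in Lemma \ref{Ole} hold only on $Z\geqslant Z_*$ and degenerate as $Z_*\to 0$, because the diffusion coefficient $\sqrt{\omega}$ vanishes there. The paper instead compares $r=\p_X\omega$ against the barrier $M_5\omega^+$ in a thin strip $0\leqslant Z\leqslant Z_2$: differentiating (\ref{VEA}) in $X$ produces a zero-order term $-\frac{A+r}{2\omega}-\frac{3A}{2}$, and the already-known bound $|r|\leqslant M_4\omega^{3/4}\leqslant CZ^{3/4}$ ensures $A+r\geqslant\mu$ for small $Z$, so the singular coefficient $-\frac{A+r}{2\omega}$ has the favourable (large negative) sign needed for the comparison to close. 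It is this sign structure near $Z=0$, not regularity, that recovers the full power of $\omega$.

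Second, in your treatment of (\ref{Gaoe}) the quantity $(\sqrt{\omega}-\sqrt{\omega_0})\p_Z^2\omega_0$ is not a source: since $\sqrt{\omega}-\sqrt{\omega_0}=\tilde{W}/(\sqrt{\omega}+\sqrt{\omega_0})$, this is a zero-order term in the unknown $\tilde{W}=\omega-\omega_0$, and treating it as a forcing makes the comparison argument circular. The paper absorbs it into the operator, as in (\ref{omor}), giving $\ml_5=\p_X+\sqrt{\omega}\p_Z^2-A+\frac{1}{\sqrt{\omega}+\sqrt{\omega_0}}\frac{\dd^2\omega_0}{\dd Z^2}$; the extra zero-order coefficient is $\leqslant0$ because $\frac{\dd^2\omega_0}{\dd Z^2}<0$, which reinforces the maximum principle and is precisely what makes $M_{11}e^{-\sqrt{\mu}Z}e^{-\beta X/8}\omega_0$ a supersolution. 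Without exploiting that sign the weighted estimate does not close.
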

\begin{proof}
We first prove (\ref{Olee2}). Differentiating the equation in (\ref{VEA}) with respect to $Z$, and setting $h=\frac{\p \om}{\p Z}$, one gets
\[
\frac{\p h}{\p X}+\sqrt{\om}\frac{\p^2 h}{\p Z^2}-Ah+\frac{h}{2\sqrt{\om}}h_Z=0.
\]
For $Z\geqslant Z^*$, $\frac{h}{2\sqrt{\om}}$ is bounded because $\om$ is bounded below. Considering $B_1=\frac{M_3}{Z_1}(1+Z)e^{-c(X-X_0)}$ in the domain $(-\infty,X_0)\times(Z^*,Z_1)$, one has
\[
\begin{split}
\ml_1(B_1):&=\frac{\p B_1}{\p X}+\sqrt{\om}\frac{\p^2 B_1}{\p Z^2}-AB_1+\frac{h}{2\sqrt{\om}}B_{1Z}\\
&=-AB_1+\Big(-c(1+Z)+\frac{h}{2\sqrt{\om}}\Big)\frac{M_3}{Z_1}e^{-c(X-X_0)}<0,
\end{split}
\]
provided that $c$ is large enough. Thus, $\ml_1(B_1+h)<0$ with $B_1+h\geqslant 0$ on $Z=Z^*$ and $Z=Z_1$. And $B_1+h>0$ follows from $h|_{X=X_0}>0$. Therefore the maximum principle implies that $B_1+h\geqslant0$ in $(-\infty,X_0)\times(Z^*,Z_1)$. Passing the limit $Z_1\rightarrow\infty$ yields $h\geqslant0$ in $(-\infty,X_0]\times[Z^*,\infty)$, which implies $\frac{\p \om}{\p Z}\geqslant0$.

Differentiating the equation in (\ref{VEA}) with respect to $X$ leads to
\[
\frac{\p r}{\p X}+\sqrt{\om}\frac{\p^2 r}{\p Z^2}-\frac{A+r}{2\om}r-\frac{3A}{2}r=-A_X\big(1-\om\big),
\]
where $r=\frac{\dd \om}{\dd Z}$. Since $A\geqslant 2\mu$ and $|r|\leqslant M_4\om^\frac{3}{4}\leqslant CZ^\frac{3}{4}$, thus $A+r\geqslant\mu$ for $0\leqslant Z\leqslant Z_2$ provided that $Z_2$ is small enough. Considering $B_2=M_5\om^+$ in the domain $(-\infty,X_0)\times(0,Z_2)$, one gets
\[
\begin{split}
\ml_2(B_2):&=\frac{\p B_2}{\p X}+\sqrt{\om}\frac{\p^2 B_2}{\p Z^2}-\frac{A+r}{2\om}B_2-\frac{3A}{2}B_2\\
&\leqslant-\mu M_5\frac{\om^+}{\om}\leqslant-\frac{\mu M_5}{M_1},\hspace{1mm}
\end{split}
\]
where $\sqrt{\om^+}\frac{\dd^2 \om^+}{\dd Z^2}\leqslant 0$ has been used. Choose $M_5$ large enough so that $\ml_2\big(B_2\pm r\big)<0$ and $B_2\pm r\geqslant0$ on $Z=0$ and $Z=Z_1$. It follows from the compatibility condition on $X=X_0$ that $r|_{X=X_0}=0$, hence the maximum principle implies that $|r|\leqslant M_5\om^+$ in $(-\infty,X_0]\times[0,Z_2]$.

Differentiating the equation in (\ref{VEA}) twice with respect to $Z$ and setting $p:=\sqrt{\om}\frac{\p^2 \om}{\p Z^2}$ give
\[
\frac{\p p}{\p X}+\sqrt{\om}\frac{\p^2 p}{\p Z^2}-Ap-\frac{\om_X}{2\om}p=0,\quad X<X_0,\hspace{1mm} Z>0.
\]
Let $P=e^{M_5X}p$. Then $P$ satisfies

\[
\frac{\p P}{\p X}+\sqrt{\om}\frac{\p^2 P}{\p Z^2}-AP-\Big[M_5+\frac{\om_X}{2\om}\Big]P=0, \quad X<X_0,\hspace{1mm} Z>0.
\]
Thus, the maximum principle shows $P\leqslant0$.

Note that $\om_0$ solves the following problem
\begin{equation}\nonumber
\left\{
\begin{aligned}
&-A(X_0)\om_0+\sqrt{\om_0}\frac{\dd^2 \om_0}{\dd Z^2}+A(X_0)=0,\quad Z>0,\\
&\om_0|_{Z=0}=0,\quad \om_0|_{Z\rightarrow\infty}=1,
\end{aligned}
\right.
\end{equation}
and $\frac{\dd w_0}{\dd Z}>0$, $\sqrt{w_0}\frac{\dd^2 w_0}{\dd Z^2}<0$.
$$\ml_3(e^{-\smu Z}):=\big[\sqrt{\om_0}\frac{\dd^2}{\dd Z^2}-A(X_0)\big]e^{-\smu Z}=[\mu\sqrt{\om_0}-A(X_0)]e^{-\mu Z}<0$$
since $A\geqslant2\mu$. Hence the maximum principle shows $1-\om_0\leqslant e^{-\smu Z}$. It follows from the equation for $\om_0$ that $\sqrt{\om_0}\frac{\dd^2\om_0}{\dd Z^2}\geqslant -C_1e^{-\smu Z}$, and thus $\frac{\dd \om_0}{\dd Z}=-\int_Z^\infty\frac{\dd^2 \om_0}{\dd Z^2}(Z')\dd Z'\leqslant C_2e^{-\smu Z}$.

It is easy to see $1-\om\leqslant M_6e^{-\smu Z}$ by the maximum principle. And 
$$\ml_1(M_7e^{-\smu Z})=\Big[\frac{\p }{\p X}+\sqrt{\om}\frac{\p^2 }{\p Z^2}-A+\frac{\om_Z}{2\sqrt{\om}}\frac{\p}{\p Z}\Big]\Big(M_7e^{-\smu Z}\Big)<0$$
implies $\frac{\p \om}{\p Z}\leqslant M_7e^{-\smu Z}$ for large $M_7$. Since
\[
\begin{split}
\ml_4(r):&=\frac{\p r}{\p X}+\sqrt{\om}\frac{\p^2 r}{\p Z^2}-\frac{A}{2\om}r-\frac{3A}{2}r=\frac{r^2}{2\om}-A_X(1-\om)\\
&\geqslant -A_X(1-\om)\geqslant Ce^{-\smu Z},
\end{split}
\] 
thus $\ml_4(M_8e^{-\smu Z}-r)<0$ for $M_8$ large enough, hence $r\leqslant M_8e^{-\smu Z}$. These and (\ref{VEA}) imply that $\frac{\p\om}{\p X}=-\sqrt{\om}\frac{\p^2 \om}{\p Z^2}+A(\om-1)\geqslant A(\om-1)\geqslant -C_3M_6e^{-\smu Z}$, and hence $\big|\frac{\p\om}{\p X}\big|\leqslant M_8e^{-\smu Z}$. The estimate of $\sqrt{\om}\frac{\p^2 \om}{\p Z^2}$ in (\ref{Olee2}) follows from (\ref{VEA}). And $\frac{\p\om}{\p Z}(Z)=-\int_Z^\infty\frac{\p^2\om}{\p Z^2}(Z')\dd Z'$ leads to the estimate for $\frac{\p\om}{\p Z}$. So (\ref{Olee2}) is proved.

Next we will prove (\ref{Gaoe}). Let $\om_1=\om-\om_0$, which satisfies
\begin{align}\label{omor}
\frac{\p\om_1}{\p X}+\sqrt{\om}\frac{\p^2 \om_1}{\p Z^2}-A\om_1+\frac{\om_1}{\sqrt{\om}+\sqrt{\om_0}}\frac{\dd^2 \om_0}{\dd Z^2}=\big(A(X)-A(X_0)\big)\big(\om_0-1\big).
\end{align}
Note that $\frac{\dd^2 \om_0}{\dd Z^2}<0$ and $\frac{\p\om}{\p Z}\geqslant0$, and set $\ml_5:=\frac{\p}{\p X}+\sqrt{\om}\frac{\p^2}{\p Z^2}-A+\frac{1}{\sqrt{\om}+\sqrt{\om_0}}\frac{\dd^2 \om_0}{\dd Z^2}$. Then 
\begin{align}\nonumber
\begin{aligned}
\ml_5(M_{11}e^{-\smu Z}e^{-\frac{\beta}{8}X}\om_0)=&\Big(-\frac{\beta}{8}-A+\mu\sqrt{\om}\Big)M_{11}e^{-\smu Z}e^{-\frac{\beta}{8}X}\om_0\\
&+\frac{1}{\sqrt{\om}+\sqrt{\om_0}}\frac{\dd^2 \om_0}{\dd Z^2}M_{11}e^{-\smu Z}e^{-\frac{\beta}{8}X}\om_0\\
&+M_{11}e^{-\smu Z}e^{-\frac{\beta}{8}X}\sqrt{\om}\Big(-\smu\frac{\dd \om_0}{\dd Z}+\frac{\dd^2 \om_0}{\dd Z^2}\Big)\\
&\leqslant-\mu M_{11}e^{-\smu Z}e^{-\frac{\beta}{8}X}\om_0+M_{11}e^{-\smu Z}e^{-\frac{\beta}{8}X}\sqrt{\om}\frac{\dd^2 \om_0}{\dd Z^2}\\
&=-\mu M_{11}e^{-\smu Z}e^{-\frac{\beta}{8}X}\om_0-M_{11}e^{-\smu Z}e^{-\frac{\beta}{8}X}\sqrt{\frac{\om}{\om_0}}A(X_0)\big(1-\om_0\big)\\
&\leqslant -m_3 M_{11}e^{-\smu Z}e^{-\frac{\beta}{8}X},
\end{aligned}
\end{align}
where $m_3\leqslant\min\{\mu,\sqrt{\frac{\om}{\om_0}}A(X_0)\}$ is a positive constant. (\ref{GD}) implies that $\big|A(X)-2\beta\big|\leqslant Ce^{-\frac{\beta}{4}X}$ and $\big|A(X_0)-2\beta\big|\leqslant Ce^{-\frac{\beta}{4}X_0}$ for large $X$ and $X_0$. So $\ml_5(M_{11}e^{-\smu Z}e^{-\frac{\beta}{8}X}\om_0\pm\om_1)<0$ for large enough $M_{11}$. Therefore the maximum principle shows $|\om_1|\leqslant M_{11}e^{-\smu Z}e^{-\frac{\beta}{8}X}\om_0$ in $[M_{10},X_0]\times[0,\infty)$.

Now we show the estimate of $r=\frac{\p \om}{\p X}$ in (\ref{Gaoe}). Recall that
\[
\frac{\p r}{\p X}+\sqrt{\om}\frac{\p^2 r}{\p Z^2}-\Big(\frac{A+r}{2\om}+\frac{3A}{2}\Big)r=-A_X\big(1-\om\big).
\]
Since $r=-\sqrt{\om}\frac{\p^2\om}{\p Z^2}+A\om-A$, it holds that 
\[
\ml_2(r)=\frac{\p r}{\p X}+\sqrt{\om}\frac{\p^2 r}{\p Z^2}-\Big(-\frac{1}{2\sqrt{\om}}\frac{\p^2\om}{\p Z^2}+2A\Big)r=-A_X\big(1-\om\big).
\]
Due to $-\frac{\p^2\om}{\p Z^2}\geqslant0$ and $\frac{\p\om}{\p Z}\geqslant0$, one can get
\begin{align}\nonumber
\begin{aligned}
\ml_2(M_{12}e^{-\smu Z}e^{-\frac{\beta}{8}X}\om_0)=&\Big(-\frac{\beta}{8}-2A+\mu\sqrt{\om}+\frac{1}{2\sqrt{\om}}\frac{\p^2\om}{\p Z^2}\Big)M_{12}e^{-\smu Z}e^{-\frac{\beta}{8}X}\om_0\\
&+M_{12}e^{-\smu Z}e^{-\frac{\beta}{8}X}\sqrt{\om}\Big(-\smu\frac{\dd \om_0}{\dd Z}+\frac{\dd^2 \om_0}{\dd Z^2}\Big)\\
&\leqslant-\mu M_{12}e^{-\smu Z}e^{-\frac{\beta}{8}X}\om_0+M_{12}e^{-\smu Z}e^{-\frac{\beta}{8}X}\sqrt{\om}\frac{\dd^2 \om_0}{\dd Z^2}\\
&=-\mu M_{12}e^{-\smu Z}e^{-\frac{\beta}{8}X}\om_0-M_{12}e^{-\smu Z}e^{-\frac{\beta}{8}X}\sqrt{\frac{\om}{\om_0}}A(X_0)\big(1-\om_0\big)\\
&\leqslant -m_3 M_{12}e^{-\smu Z}e^{-\frac{\beta}{8}X}.
\end{aligned}
\end{align}
So $\ml_2(M_{12}e^{-\smu Z}e^{-\frac{\beta}{8}X}\om_0\pm r)<0$ for large enough $M_{12}$. Due to the compatibility conditions on $X=X_0$, it holds that $r|_{X=X_0}=0$. Thus the maximum principle implies that $|r|\leqslant M_{12}e^{-\smu Z}e^{-\frac{\beta}{8}X}\om_0$ in $[M_{10},X_0]\times[0,\infty)$.

Finially, one can estimate $\sqrt{\om}\frac{\p^2 \om_1}{\p Z^2}$ in (\ref{Gaoe}) using (\ref{omor}). And the estimate of $\frac{\p\om_1}{\p Z}$ follows from $\frac{\p\om_1}{\p Z}(Z)=-\int_Z^\infty\frac{\p^2\om_1}{\p Z^2}(Z')\dd Z'$. So Lemma \ref{Gaoi} is proved.
\end{proof}

Similar analysis at $-\infty$ can be derived to get the following lemma.
\begin{lemma}\label{Gaon}
Assume that $A(X)=-\frac{J_\xi(X,-1)}{J(X,-1)}$ satisfies (\ref{FPC}) and (\ref{GD}). Then there exists a constant $M_{15}$ such that for $(X,Z)\in(-\infty,-M_{15}]\times[0,\infty)$, it holds that
\begin{align}\label{Gaoe2}
\begin{aligned}
&\big| \om-\om^-\big|\leqslant M_{16} e^{-\smu Z}e^{\frac{\al X}{8}}\om^-,\hspace{2mm}\Big| \frac{\p \om}{\p X}\Big|\leqslant M_{17} e^{-\smu Z}e^{\frac{\al X}{8}}\om^-,\\
&\Big| \frac{\p \om}{\p Z}-\frac{\p \om^-}{\p Z}\Big|\leqslant M_{18} e^{-\smu Z}e^{\frac{\al X}{8}},\\
&\Big|\sqrt{\om^-}\Big[\frac{\p^2\om}{\p Z^2}-\frac{\p^2 \om^-}{\p Z^2}\Big]\Big|\leqslant M_{19} e^{-\smu Z}e^{\frac{\al X}{8}},
\end{aligned}
\end{align}
where the above constants $M_k$ depend only on $J$. 
\end{lemma}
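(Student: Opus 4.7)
The proof plan mirrors the strategy of Lemma \ref{Gaoi}, but at the upstream end $X \to -\infty$, with $\om^-$ playing the role that $\om_0$ played before. The idea is to set $\tilde{\om} := \om - \om^-$, derive its PDE, and close everything with a two-sided exponential barrier that decays both in $Z$ and in $|X|$.

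First I would record the analogues of the $\om_0$-estimates for $\om^-$: the explicit form $\om^-=(2A^-_p)^2$ together with \eqref{VE0} gives $\frac{d\om^-}{dZ}>0$, $\sqrt{\om^-}\frac{d^2\om^-}{dZ^2}<0$, $1-\om^-\leqslant Ce^{-\smu Z}$, $\frac{d\om^-}{dZ}\leqslant Ce^{-\smu Z}$, and $-\sqrt{\om^-}\frac{d^2\om^-}{dZ^2}\leqslant Ce^{-\smu Z}$, by the same maximum-principle barrier $e^{-\smu Z}$ used in Lemma \ref{Gaoi}. Subtracting \eqref{VE0} from \eqref{VEA} yields
\begin{equation*}
\p_X\tilde{\om}+\sqrt{\om}\p_Z^2\tilde{\om}-A\tilde{\om}+\frac{\tilde{\om}}{\sqrt{\om}+\sqrt{\om^-}}\frac{d^2\om^-}{dZ^2}=(A(X)-2\al)(\om^--1),
\end{equation*}
whose right-hand side is bounded by $Ce^{\al X/4}e^{-\smu Z}$ thanks to \eqref{GD}.

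Next I would test the barrier $B:=M_{16}e^{-\smu Z}e^{\al X/8}\om^-$ against the operator $\ml_5$ from the proof of Lemma \ref{Gaoi}. The computation is formally identical, replacing $\om_0$ by $\om^-$ and the growth factor $e^{-\beta X/8}$ by the decay factor $e^{\al X/8}$; the sign of $\p_X B$ now contributes $+\frac{\al}{8}B$, but this is absorbed by $-AB\leqslant -2\mu B$ so that
\begin{equation*}
\ml_5(B)\leqslant -m_3 M_{16}e^{-\smu Z}e^{\al X/8}
\end{equation*}
for some $m_3>0$. Since the right-hand side of the $\tilde{\om}$-equation is of order $e^{\al X/4}e^{-\smu Z}=o(e^{\al X/8}e^{-\smu Z})$ for $X$ sufficiently negative, choosing $M_{15}$ large and $M_{16}$ larger still forces $\ml_5(B\pm\tilde{\om})<0$ on $(-\infty,-M_{15}]\times(0,\infty)$.

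The delicate point, and the main obstacle, is that the barrier $B$ \emph{vanishes} as $X\to-\infty$, so one cannot simply check it on a parabolic boundary the way Lemma \ref{Gaoi} did via the compatibility condition at $X=X_0$. To cope with this, I would first show the qualitative statement $\underset{X\to-\infty}{\lim}\|\om(X,\cdot)-\om^-\|_{L^\infty_Z}=0$, which follows from the fact that $A(X)\to 2\al$ and the limit equation admits $\om^-$ as its unique monotone stationary profile (together with the uniform estimates \eqref{Olee2} which provide compactness of translates $\om(X+X_n,\cdot)$). Once this is known, the positive (resp.\ negative) part of $\tilde{\om}-B$ (resp.\ $\tilde{\om}+B$) cannot attain a positive maximum in the interior (by $\ml_5$), nor at $Z=0$ (where both vanish), nor at $Z=\infty$ (where $|\tilde{\om}|\lesssim e^{-\smu Z}$ and $B\sim M_{16}e^{\al X/8}e^{-\smu Z}$ dominates for $M_{16}$ large), nor at $X=-\infty$ (by the qualitative convergence just established), nor at $X=-M_{15}$ (by boundedness of $\tilde{\om}$ and choice of $M_{16}$ depending on $M_{15}$). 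The maximum principle then yields the first bound in \eqref{Gaoe2}.

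For the estimate on $r=\p_X\om$, I would reuse the operator $\ml_2$ from Lemma \ref{Gaoi}, for which $\ml_2(r)=-A_X(1-\om)$ with $|A_X|\leqslant Ce^{\al X/4}$ by \eqref{GD}. The barrier $M_{17}e^{-\smu Z}e^{\al X/8}\om^-$ is treated identically; the interior maximum argument closes exactly as before, while the boundary at $X=-M_{15}$ is controlled using the already-proven bound on $r$ from \eqref{Olee2}. Finally, the bound on $\sqrt{\om^-}(\p_Z^2\om-(\om^-)'')$ is read off algebraically from the $\tilde{\om}$-equation once $\tilde{\om}$ and $\p_X\tilde{\om}$ are controlled, and $\p_Z\tilde{\om}(Z)=-\int_Z^\infty\p_Z^2\tilde{\om}(Z')\,dZ'$ produces the remaining estimate.
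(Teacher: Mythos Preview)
Your strategy is essentially the one the paper intends (it merely says ``similar analysis at $-\infty$''), and the barrier computation, the equation for $\tilde\omega=\omega-\omega^-$, and the algebraic deduction of the $\partial_Z$ and $\sqrt{\omega^-}\,\partial_Z^2$ bounds are all carried over correctly.

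There is, however, one misdirected effort. You worry that the barrier $B=M_{16}e^{\alpha X/8}e^{-\sqrt{\mu}Z}\omega^-$ vanishes as $X\to-\infty$, and to compensate you insert a preliminary qualitative-convergence argument. This step is unnecessary: the equation $\partial_X\omega+\sqrt{\omega}\,\partial_Z^2\omega+\cdots=0$ is forward parabolic in $-X$, so on the slab $(-\infty,-M_{15}]\times(0,\infty)$ the parabolic boundary consists of $\{X=-M_{15}\}$ together with $\{Z=0\}\cup\{Z=\infty\}$, and \emph{not} $\{X=-\infty\}$. This is exactly parallel to Lemma~\ref{Gaoi}, where the initial side was the largest-$X$ slice $\{X=X_0\}$. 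Here the initial side is $\{X=-M_{15}\}$: from \eqref{Olee} and \eqref{Olee2} one has a uniform bound $|\tilde\omega(-M_{15},Z)|\leqslant C_0\,e^{-\sqrt{\mu}Z}\omega^-(Z)$, so choosing $M_{16}\geqslant C_0\,e^{\alpha M_{15}/8}$ already gives $B\pm\tilde\omega\geqslant0$ at $X=-M_{15}$, and the maximum principle on each truncated slab $[-T,-M_{15}]$ closes without any appeal to behavior at $X\to-\infty$. Your compactness-of-translates argument is valid but superfluous.

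One minor technical point: the absorption ``$+\tfrac{\alpha}{8}B$ is absorbed by $-AB\leqslant-2\mu B$'' is not guaranteed by $A\geqslant2\mu$ alone, since $\tfrac{\alpha}{8}-\mu$ need not be negative. What makes it work is that on $X\leqslant-M_{15}$ with $M_{15}$ large one has $A(X)\geqslant 2\alpha-\epsilon$ by \eqref{GD}, whence $\tfrac{\alpha}{8}-A+\mu\sqrt{\omega}\leqslant \tfrac{\alpha}{8}-2\alpha+\epsilon+\mu\leqslant -\tfrac{7\alpha}{8}+\epsilon<0$. This is the place where the choice of $M_{15}$ enters the barrier computation.
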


{\bf Proof of Proposition \ref{PRAE}:}
Taking the limit $X_0\rightarrow\infty$ in (\ref{VEA}), we can get a solution, $\om$, of the following Prandtl problem: 
\begin{equation}\label{VEFIN}
\left\{
\begin{aligned}
&\frac{\p \om}{\p X}-A\om+\sqrt{\om}\frac{\p^2 \om}{\p Z^2}+A=0,\quad (X,Z)\in(-\infty,\infty)\times(0,\infty),\\
&\om|_{Z=0}=0,\quad \om|_{Z\rightarrow\infty}=1,\\
&\om|_{X\rightarrow\infty}=\om^+,
\end{aligned}
\right.
\end{equation}
and $\om$ satisfies the following estimates
\begin{align}\label{Gaore1}
\begin{aligned}
&\big| \om-\om^+\big|\leqslant M_{11} e^{-\smu Z}e^{-\frac{\beta X}{8}}\om^+,\hspace{2mm}\Big| \frac{\p \om}{\p X}\Big|\leqslant M_{12} e^{-\smu Z}e^{-\frac{\beta X}{8}}\om^+,\\
&\Big| \frac{\p \om}{\p Z}-\frac{\p \om^+}{\p Z}\Big|\leqslant M_{13} e^{-\smu Z}e^{-\frac{\beta X}{8}},\\
&\Big|\sqrt{\om^+}\Big[\frac{\p^2\om}{\p Z^2}-\frac{\p^2 \om^+}{\p Z^2}\Big]\Big|\leqslant M_{14} e^{-\smu Z}e^{-\frac{\beta X}{8}}
\end{aligned}
\end{align}
in $[M_{10},\infty)\times[0,\infty)$,  and (\ref{Gaoe2}) in $(-\infty,M_{15}]\times[0,\infty)$. Furthermore, similar estimates for high-order derivatives hold. In terms of $[u_p,v_p]$ and the coordinate $(\xi,\eta)$, we have 
$$u_p=-\frac{1}{2}\sqrt{\om},\quad \eta(X,Z)=\sqrt{2}\int_0^Z\frac{1}{\sqrt{\om(X,Z')}}\dd Z'.$$
Direct computations give that
\begin{align}\nonumber
\begin{aligned}
&v_p=-\frac{\sqrt{2}}{4}\int_0^Z\frac{\om_X}{\om^{\frac{3}{2}}},\\
&\frac{\p u_p}{\p \eta}=-\frac{\sqrt{2}}{8}\frac{\p\om}{\p Z},\quad \frac{\p^2 u_p}{\p \eta^2}=-\frac{1}{8}\sqrt{\om}\frac{\p^2 \om}{\p Z^2},\\
&\frac{\p u_p}{\p \xi}=-\frac{1}{4\sqrt{\om}}\frac{\p\om}{\p X}+\frac{\sqrt{2}}{8}\frac{\p\eta}{\p X}\frac{\p \om}{\p Z}.
\end{aligned}
\end{align}
All the estimates in Proposition \ref{PRAE} follows from (\ref{Olee2}), (\ref{Gaoe2}) and (\ref{Gaore1}). Thus Proposition \ref{PRAE} is proved.
\qed

\section{The construction of a high-order Prandtl expansion}\label{high-order}
We construct a high-order Prandtl expansion in this section. By the Prandtl's viscous boundary layer theory, the approximate solution of the Navier-Stokes equation (\ref{NSP}) can take the following form:
\begin{align}\nonumber
\begin{aligned}
\Phi_s\approx&\underset{m=0}{\overset{n}{\sum}}\Big(\eq\Big)^m\Phi_e^m(\xi,\psi)+\underset{m=0}{\overset{n}{\sum}}\Big(\eq\Big)^{m+1} Q\Phi_b^m\big(\xi,\qe(\psi+1)\big)\\
&-\underset{m=0}{\overset{n}{\sum}}\Big(\eq\Big)^{m+1}Q\Phi_b^{m}\big(\xi,\qe(1-\psi)\big),
\end{aligned}
\end{align}
where $n=12$, $\big(\xi,\qe(\psi+1)\big)$ and $\big(\xi,\qe(1-\psi)\big)$ are the Prandtl variables near $\psi=-1$ and $\psi=1$. 

$\Phi_e^0$ is a stream-function of the leading-order Euler flow, namely, $\Phi_e^0=\Phi_e=\frac{Q}{2}\psi$. Let $u_b^0(\xi,\eta)=u_p(\xi,\eta)+\frac{1}{2}$ and $\Phi_b^0(\xi,\eta)=\int_\eta^\infty u_b^0(\xi,\eta')\dd\eta'$. The equations for $\Phi_e^m$ and $\Phi_b^m$ can be derived by the standard method of matched asymptotic expansions. We start with the case $m=1$. 

Away from the boundaries $\psi=\pm1$, $\Phi_s\approx \underset{k=0}{\overset{n}{\sum}}\Big(\eq\Big)^m\Phi_e^m(\xi,\psi)$. Recall the equation in (\ref{NSP}). Replacing $\Phi$ by $\Phi_s$ and comparing the order $\mathcal{O}(\eq)$ in the equation in (\ref{NSP}), one finds
$$-\Delta_{\xi,\psi}\hspace{1mm}\Phi_e^1=0.$$
To match the boundary conditions on $\psi=\pm1$, one requires
$$
\Phi_e^1|_{\psi=\pm1}=\pm Q\Phi_b^0(\xi,0)=\pm Q\int_0^\infty u_b^0(\xi,\eta)\dd\eta.
$$
Since $\underset{\xi\rightarrow\pm\infty}{\lim}\frac{\p^2 \Phi_e^1}{\p \psi^2}=0$, thus the following asymptotic behavior is required,
$$
\underset{\xi\rightarrow\pm\infty}{\lim}\Phi_e^1=QI^{0,\pm}\psi,
$$
where $I^{0,\pm}=\underset{\xi\rightarrow\pm\infty}{\lim}\Phi_b^0(\xi,0)$ are two constants. Thus $\Phi_e^1$ should be a solution to the following problem in $(-\infty,\infty)\times(-1,1)$:
\begin{equation}\label{Euler1}
\left\{
\begin{aligned}
&-\Delta_{\xi,\psi}\hspace{1mm}\Phi_e^1=0,\\
&\Phi_e^1|_{\psi=\pm1}=\pm Q\Phi_b^0(\xi,0),\\
&\underset{\xi\rightarrow\pm\infty}{\lim}\Phi_e^1=QI^{0,\pm}\psi.
\end{aligned}
\right.
\end{equation}
It is easy to see that $|\Phi_b^0(\xi,0)-I^{0,+}|\leqslant Ce^{-\frac{\beta\xi}{8}}$ and $|\Phi_b^0(\xi,0)-I^{0,-}|\leqslant Ce^{\frac{\al\xi}{8}}$ due to (\ref{infe}) and (\ref{zere}). The standard theory for elliptic equations shows the following lemma.
\begin{lemma}
(\ref{Euler1}) admits a solution $\Phi_e^1$ satisfying 
\begin{align}\label{Euler1e}
\begin{aligned}
\int_{-\infty}^\infty\int_{-1}^1\bigg|\p^k_\xi\p^l_\psi\Big[\Phi_e^1(\xi,\psi)-Q\Phi_b^0(\xi,0)\psi\Big]\bigg|^2\cosh\Big(\frac{\lambda\xi}{8}\Big)\dd\psi\dd\xi\leqslant C_{k,l}(\lambda)Q^2,
\end{aligned}
\end{align}
where $0<\lambda<2\al$ is a constant.
\end{lemma}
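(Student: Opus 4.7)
My plan is to reduce the problem to one with homogeneous Dirichlet data on $\psi=\pm1$ and then to perform a direct weighted energy estimate. Set $\tilde{\Phi}(\xi,\psi):=\Phi_e^1(\xi,\psi)-Q\Phi_b^0(\xi,0)\psi$, so that $\tilde{\Phi}|_{\psi=\pm1}=0$, while
$$-\Delta_{\xi,\psi}\tilde{\Phi}=Q\psi\,\p_\xi^2\Phi_b^0(\xi,0)=:F.$$
By Proposition \ref{PRAE} (specifically (\ref{infe}) and (\ref{zere})), $F$ and its $\xi$-derivatives decay exponentially: $|\p_\xi^k F|\leqslant C_k Q\, e^{-\beta\xi/8}$ for $\xi\geqslant0$ and $|\p_\xi^k F|\leqslant C_k Q\, e^{\al\xi/8}$ for $\xi\leqslant0$. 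Because $Q\Phi_b^0(\xi,0)\psi$ approaches $QI^{0,\pm}\psi$ exponentially as $\xi\to\pm\infty$, the matching condition at infinity for $\tilde{\Phi}$ is simply $\tilde{\Phi}\to0$.

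To produce $\tilde{\Phi}$, I would first solve the truncated problem $-\Delta\tilde{\Phi}_N=F$ on $(-N,N)\times(-1,1)$ with zero Dirichlet data on the entire boundary by Lax--Milgram, derive a weighted bound uniform in $N$, and then pass to the limit. The main tool is the weighted identity obtained by testing against $\tilde{\Phi}_N\cosh(\lambda\xi/8)$; integration by parts (with no boundary contribution in $\xi$ since $\tilde{\Phi}_N$ vanishes on $\xi=\pm N$, and none in $\psi$ since $\tilde{\Phi}_N$ vanishes on $\psi=\pm1$) yields
$$\int_{-N}^{N}\!\!\int_{-1}^{1}\Bigl[|\p_\xi\tilde{\Phi}_N|^2+|\p_\psi\tilde{\Phi}_N|^2-\tfrac{1}{2}\Big(\frac{\lambda}{8}\Big)^2|\tilde{\Phi}_N|^2\Bigr]\cosh\!\Big(\frac{\lambda\xi}{8}\Big)\dd\psi\dd\xi=\int_{-N}^{N}\!\!\int_{-1}^{1}F\,\tilde{\Phi}_N\cosh\!\Big(\frac{\lambda\xi}{8}\Big)\dd\psi\dd\xi.$$
Since $\tilde{\Phi}_N|_{\psi=\pm1}=0$, the Poincar\'e inequality $\int_{-1}^{1}|\tilde{\Phi}_N|^2\dd\psi\leqslant(2/\pi)^2\int_{-1}^{1}|\p_\psi\tilde{\Phi}_N|^2\dd\psi$ combined with $\lambda/8<\al/4\leqslant\pi/8$ makes the left-hand side coercive. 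Cauchy--Schwarz then absorbs the right-hand side provided that $\int\!\!\int|F|^2\cosh(\lambda\xi/8)\dd\psi\dd\xi<\infty$, which is exactly the place where the hypothesis $\lambda<2\al$ enters: the slower decay rate of $F$, namely $\al/4$ as $\xi\to-\infty$, must strictly exceed the growth rate $\lambda/8$ of the weight. Passing to the limit $N\to\infty$ produces the global solution together with the $(k,l)=(0,0)$ case of (\ref{Euler1e}).

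For higher derivatives, I would exploit that the Laplacian has constant coefficients and that the lifted boundary datum $Q\Phi_b^0(\xi,0)\psi$ is linear in $\psi$. Differentiating the equation $k$ times in $\xi$ gives $-\Delta(\p_\xi^k\tilde{\Phi})=\p_\xi^k F$, with zero Dirichlet data on $\psi=\pm1$; since $\p_\xi^k F$ satisfies the same exponential decay as $F$, the same energy argument produces the weighted bound for all pure $\xi$-derivatives. Derivatives in $\psi$ are then obtained algebraically from the equation itself, $\p_\psi^2\tilde{\Phi}=-\p_\xi^2\tilde{\Phi}-F$, iterated as needed (together with the trivial fact $\p_\psi^{\ell}[Q\Phi_b^0(\xi,0)\psi]=0$ for $\ell\geqslant2$), which converts the bounds on $\tilde{\Phi}$ into bounds on the combination $\Phi_e^1(\xi,\psi)-Q\Phi_b^0(\xi,0)\psi$ appearing in (\ref{Euler1e}).

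The main obstacle is the sharp balance in choosing $\lambda$: it must be large enough to retain enough decay information for later stages of the expansion, yet the integrability of $F$ against $\cosh(\lambda\xi/8)$ at $-\infty$ forces $\lambda<2\al$, which is exactly the restriction in the statement. All other steps---Lax--Milgram on truncations, uniform-in-$N$ weighted estimate, passage to the limit, and bootstrapping higher derivatives via the elliptic equation---are standard once this balance is respected.
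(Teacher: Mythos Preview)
Your proposal is correct and is precisely the argument the paper has in mind: the paper does not write out a proof for this lemma, merely stating that ``the standard theory for elliptic equations shows the following lemma'', but the template you follow---subtract the linear lift $Q\Phi_b^0(\xi,0)\psi$, test against the solution with weight $\cosh(\lambda\xi/8)$, and use Poincar\'e in~$\psi$---is exactly the weighted energy estimate the paper carries out in detail for the analogous Lemma~\ref{psi-E-est}. Your identification of $\lambda<2\al$ as the threshold set by the $e^{\al\xi/8}$ decay of $\p_\xi^k\Phi_b^0(\xi,0)$ at $-\infty$ is also correct.
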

In the following, we choose $\lambda=\mu\leqslant\al$.

Next we will construct the boundary layer profile $\Phi_b^1$. Only the case $\psi$ near $-1$ will be dealt with since the case near $\psi=1$ is similar. Set $\tilde{\Phi}_s(\xi,\eta)=\frac{1}{\sqrt{\e Q}}\Phi_s(\xi,\psi)$, where $\eta=\qe(\psi+1)$. Near $\psi=-1$, $\tilde{\Phi}_s(\xi,\eta)\approx \underset{m=0}{\overset{n}{\sum}}\Big(\eq\Big)^{m-1}\frac{1}{Q}\Phi_e^m\big(\xi,-1+\eq\eta\big)+\underset{m=0}{\overset{n}{\sum}}\Big(\eq\Big)^m\Phi_b^m(\xi,\eta)$.  Recall the Navier-Stokes system in the Prandtl variable, (\ref{NSPP}). In (\ref{NSPP}), replacing $\tilde{\Phi}$ by $\tilde{\Phi}_s$ and comparing the order $\mathcal{O}(\eq)$ lead to
\begin{align}\label{lipra}
\begin{aligned}
u_pu_{b\xi\eta}^1+u_b^1u_{p\xi\eta}-Au_pu_{b\eta}^1-Au_b^1u_{p\eta}+v_pu_{b\eta\eta}^1+\big[v_b^1-v_b^1|_{\eta=0}\big]u_{p\eta\eta}-u_{b\eta\eta\eta}^1=f^1_b,
\end{aligned}
\end{align}
where $u_b^1=-\Phi_{b\eta}^1$, $v_b^1-v_b^1|_{\eta=0}=-\int_0^\eta u_{b\xi}^1(\xi,\eta')\dd\eta'$, and $f^1_b$ is the force term:
\begin{align}\nonumber
\begin{aligned}
f^1_b=&\frac{1}{Q}\p_\psi\Phi_{e}^1|_{\psi=-1} u_{b\xi\eta}^0-\frac{A}{Q}\p_\psi\Phi_{e}^1|_{\psi=-1}u_{b\eta}^0-\p_\psi\Big[\frac{J_\xi}{J}\Big]\Big|_{\psi=-1}\eta u_pu_{b\eta}^0\\
      &-\frac{J_{\psi}}{J}\Big|_{\psi=-1}v_pu_{b\eta}^0+2\frac{J_{\psi}}{J}\Big|_{\psi=-1}u_{b\eta\eta}^0.
\end{aligned}
\end{align}
$u_b^1$ should satisfy the boundary conditions $u_b^1|_{\eta=0}=\frac{1}{Q}\Phi_{e\psi}^1|_{\psi=-1}$, and $\underset{\eta\rightarrow\infty}{\lim}u_b^1=0$. Rewrite (\ref{lipra}) as
\begin{align}\nonumber
\begin{aligned}
\p_\eta\Big[u_pu_{b\xi}^1+u_b^1u_{p\xi}-Au_pu_{b}^1+v_pu_{b\eta}^1+\big[v_b^1-v_b^1|_{\eta=0}\big]u_{p\eta}-u_{b\eta\eta}^1\Big]=f^1_b.
\end{aligned}
\end{align}
Since $f^1_b$ decays fast to $0$ as $\eta\rightarrow\infty$, one has derived the following problem for $[u_b^1,v_b^1]$.
\begin{align}\label{brandtl1}
\left\{
\begin{aligned}
&u_pu_{b\xi}^1+u_b^1u_{p\xi}-Au_pu_{b}^1+v_pu_{b\eta}^1+\big[v_b^1-v_b^1|_{\eta=0}\big]u_{p\eta}-u_{b\eta\eta}^1=F^1_b,\\
&u_{b\xi}^1+v_{b\eta}^1=0,\\
&u_b^1|_{\eta=0}=\frac{1}{Q}\Phi_{e\psi}^1|_{\psi=-1},\quad \underset{\eta\rightarrow\infty}{\lim}u_b^1=0,
\end{aligned}
\right.
\end{align}
where $F^1_b=-\int_\eta^\infty f^1_b(\xi,\eta')\dd \eta'$. The equation in (\ref{brandtl1}) is a linear parabolic equation for $u_b^1$ when $-\xi$ is regarded as a time variable and $\eta$ as a space variable.

Now we discuss the asymptotic behavior $\big[A_b^{1,\pm}(\eta),0\big]:=\underset{\xi\rightarrow\pm\infty}{\lim}\big[u_b^1(\xi,\eta),v_b^1(\xi,\eta)\big]$. Due to (\ref{asybJ2}), it holds that
\begin{align}\label{GD2}
\begin{aligned}
&\bigg|\p_\xi^k\p_\psi^l\Big[\frac{J_\psi(\xi,\psi)}{J(\xi,\psi)}\Big]\bigg|\leqslant C_{k,j}e^{-\frac{\beta}{4}\xi} \quad\text{for } \xi\geqslant0,\\
&\bigg|\p_\xi^k\p_\psi^l\Big[\frac{J_\psi(\xi,\psi)}{J(\xi,\psi)}\Big]\bigg|\leqslant C_{k,j}e^{\frac{\al}{4}\xi} \quad\text{for } \xi<0,
\end{aligned}
\end{align}
where $k$, $l$ are non-negative integers. It follows that $\underset{\xi\rightarrow\pm\infty}{\lim}\frac{J_\psi(\xi,\psi)}{J(\xi,\psi)}=0$. And $\underset{\xi\rightarrow\pm\infty}{\lim}\p_\psi\Big[\frac{J_\xi}{J}\Big]=0$ due to (\ref{asybJ1}). It follows from (\ref{infe}) and (\ref{zere}) that $\underset{\xi\rightarrow\pm\infty}{\lim}[u_p,v_p]=[A^\pm_p,0]$. Taking the limits as $\xi\rightarrow\pm\infty$ in the equation in (\ref{brandtl1}), one gets that
\begin{align}\label{brainf1}
\begin{aligned}
-2\beta A^+_pA_{b}^{1,+}-A_{b\eta\eta}^{1,+}=-2\beta I^{0,+}A_b^{0,+},
\end{aligned}
\end{align} 
\begin{align}\label{braminf1}
\begin{aligned}
-2\al A^-_pA_{b}^{1,-}-A_{b\eta\eta}^{1,-}=-2\al I^{0,-}A_b^{0,-},
\end{aligned}
\end{align} 
where $A_b^{0,\pm}=\underset{\xi\rightarrow\pm\infty}{\lim}u_b^0=A_p^\pm+\frac{1}{2}$. The boundary conditions of $A_b^{1,\pm}$ are $A_b^{1,\pm}|_{\eta=0}=I^{0,\pm}$ and $\underset{\eta\rightarrow\infty}{\lim}A_b^{1,\pm}=0$. Let $\chi_1(\xi)$ be a smooth cut-off function satisfying $\chi_1|_{(-\infty,0]}=0$, $\chi_1|_{[1,\infty)}=1$, and $0\leqslant\chi_1\leqslant1$. Then the following lemma for $[u_b^1,v_b^1]$ holds.
\begin{lemma}\label{brandtl linear}
(\ref{brandtl1}) admits a solution $[u_b^1,v_b^1]$ satisfying 
\begin{align}\label{ube}
\begin{aligned}
\int_{-\infty}^\infty\int_{0}^\infty\bigg|\p^k_\xi\p^l_\eta\Big[u_b^1-\chi_1(\xi)A_b^{1,+}-\chi_1(-\xi)A_b^{1,-} \Big]\bigg|^2\cosh\Big(\frac{\mu\xi}{8}\Big)e^{2c_1\eta}\dd\eta\dd\xi\leqslant C_{k,l},
\end{aligned}
\end{align}
where $k$ and $l$ are non-negative integers, $c_1$ is a positive constant. And $A_b^{1,\pm}$ solve the equations (\ref{brainf1}) and (\ref{braminf1}), respectively, satisfying 
\begin{align}\label{Abe}
\begin{aligned}
\int_{0}^\infty\bigg|\frac{\dd^l}{\dd \eta^l}A_b^{1,\pm}\bigg|^2e^{2c_1\eta}\dd\eta\dd\xi\leqslant C_{l},
\end{aligned}
\end{align}
where $l$ is non-negative integer.
\end{lemma}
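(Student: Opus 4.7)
The plan is to first solve the limiting ODEs for $A_b^{1,\pm}$ on $(0,\infty)$, then reduce (\ref{brandtl1}) to an equation with zero limit at $\xi = \pm\infty$, and finally close a weighted energy estimate with weight $\cosh(\mu\xi/8)e^{2c_1\eta}$ that controls the growth in both $|\xi|$ and $\eta$.

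For $A_b^{1,+}$, the ODE (\ref{brainf1}) is linear and second-order on $(0,\infty)$ with forcing $-2\beta I^{0,+}A_b^{0,+}$, where $A_b^{0,+} = A_p^+ + 1/2$ decays exponentially in $\eta$ by Proposition \ref{PRAE}. Since $-A_p^+ > 0$ is bounded below away from $\eta = 0$ and $-A_p^+ \to 1/2$ at infinity, the operator $-\partial_\eta^2 - 2\beta A_p^+$ is coercive on $H^1_0$. Lax--Milgram applied after an affine lift realizing $A_b^{1,+}(0) = I^{0,+}$ produces a unique solution, and testing the ODE against $A_b^{1,+} e^{2c_1\eta}$ yields (\ref{Abe}): the coercive coefficient $-2\beta A_p^+ - 2c_1^2 \geq \beta/2$ at large $\eta$ once $c_1 < \sqrt{\beta/2}$ absorbs the weight error, while $(A_{b\eta}^{1,+})^2 e^{2c_1\eta}$ with a Poincar\'e-type bound handles the region near $\eta = 0$. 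Higher-derivative bounds follow by differentiating the ODE. The case $A_b^{1,-}$ is analogous with $\beta$ replaced by $\alpha$.

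For the main equation, set $w := u_b^1 - \chi_1(\xi) A_b^{1,+} - \chi_1(-\xi) A_b^{1,-}$. Subtracting $\chi_1(\pm\xi)$-multiples of (\ref{brainf1})--(\ref{braminf1}) from (\ref{brandtl1}), I obtain a linear equation of the same type for $w$ with source $\widetilde F$ that collects $F_b^1$, commutators $\chi_1'(\pm\xi) A_b^{1,\pm}$ times transport coefficients (compactly supported in $\xi$), and differences such as $(u_p - A_p^\pm)\chi_1(\pm\xi) \partial_\xi A_b^{1,\pm}$ and $(A - 2\beta)u_p \chi_1(\xi) A_b^{1,+}$. By (\ref{Euler1e}), (\ref{infe})--(\ref{zere}), (\ref{GD}) and (\ref{GD2}), each piece of $\widetilde F$ decays like $e^{-\mu|\xi|/4}e^{-c_1\eta}$, so $\widetilde F$ lies in the weighted $L^2$. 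The boundary trace $w|_{\eta = 0}$ also decays exponentially in $|\xi|$ by (\ref{Euler1e}), and is absorbed into $\widetilde F$ via a smooth lift.

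The heart of the proof, and the main technical obstacle, is the weighted energy estimate. Multiplying the $w$-equation by $-wW$ with $W = \cosh(\mu\xi/8)e^{2c_1\eta}$ and integrating over $\R \times (0,\infty)$ produces, after integration by parts, the good damping $\iint A(-u_p)w^2 W$ coming from the $-Au_p u_b^1$ term, the dissipation $\iint (|w_\eta|^2 - c_1^2 w^2)W$ from $-w_{\eta\eta}$, and an error $\tfrac{\mu}{16}\iint \tanh(\mu\xi/8)u_p w^2 W$ from differentiating the weight against the transport term $u_p w_\xi$, together with lower-order contributions involving $u_{p\xi}$, $v_p u_{b\eta}^1$ and the nonlocal Prandtl transport $(v_b^1 - v_b^1|_{\eta = 0})u_{p\eta}$; the last of these is tamed by Hardy's inequality in $\eta$ together with the exponential decay (\ref{pinfe}) of $u_{p\eta}$. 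Because $A \geq 2\mu$ by (\ref{FPC2}) and $-u_p$ is bounded below, choosing $c_1$ small relative to $\mu$ allows the damping and dissipation to absorb every error term and yields $\|w\|_W^2 + \|w_\eta\|_W^2 \lesssim \|\widetilde F\|_W^2$, which is (\ref{ube}) at $k = l = 0$. Higher-order estimates follow by commuting $\partial_\xi^k \partial_\eta^l$ through the equation, the differentiated coefficients retaining the required decay and boundedness, and iterating the same argument. Existence itself is produced by solving on $[-N,N] \times (0,\infty)$ with terminal data $A_b^{1,+}$ at $\xi = N$ and initial data $A_b^{1,-}$ at $\xi = -N$, obtaining the above estimates uniformly in $N$, and passing to the weak limit.
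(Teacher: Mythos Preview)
Your treatment of the limiting ODEs for $A_b^{1,\pm}$ via Lax--Milgram is a legitimate alternative to the paper's explicit variation-of-parameters formula, and your existence scheme (solve on $[-N,N]$ and pass to the limit) is reasonable. The real problem is the weighted energy estimate for $w$.

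The gap is the nonlocal Prandtl transport term. After your subtraction, $(v_b^1 - v_b^1|_{\eta=0})\,u_{p\eta}$ becomes $\bigl(-\int_0^\eta w_\xi\,d\eta'\bigr)u_{p\eta}$ plus good source terms. When you test against $\pm wW$, this contributes
\[
\iint \Bigl(\int_0^\eta w_\xi\Bigr)\,u_{p\eta}\,w\,W\,d\eta\,d\xi,
\]
and no amount of Hardy in $\eta$ or exponential decay of $u_{p\eta}$ removes the $w_\xi$: Cauchy--Schwarz or Hardy on $\int_0^\eta w_\xi$ always leaves $\|w_\xi\|_W$, which your basic energy does not control. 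Integration by parts in $\xi$ does not help either, since the cross term $(\int_0^\eta w)\,u_{p\eta}\,w_\xi\,W$ reappears. (There is also a sign slip: multiplying by $-wW$ makes $(-Au_p w)(-wW)=Au_p w^2 W<0$; you need $+wW$ to get the damping you describe.) A secondary issue is that your damping $A(-u_p)w^2$ degenerates at $\eta=0$ while $u_{p\xi}$ is $O(1)$ there with no sign; Poincar\'e alone does not obviously close this without extra structure.

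The paper avoids both difficulties by passing to the stream function $\phi=-\int_0^\eta\tilde u$ and then to the quotient $\hslash=\phi/u_p$, testing against $\hslash_\eta\rho$. This turns the nonlocal term into the local $u_{p\eta}\phi_\xi$, and the resulting energy identity produces a coefficient of $\hslash_\eta^2$ that, using the Prandtl equation for $[u_p,v_p]$ and the favourable-pressure bound $A\ge 2\mu$, is bounded below by $\frac{3}{8}\mu$; simultaneously, the coefficient of $\hslash^2$ \emph{vanishes identically} via the same Prandtl structure. These two algebraic identities are the heart of the argument and are not visible at the level of $w$ itself.
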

\begin{proof}
We first estimate $A_b^{1,+}$. Indeed, one can solve the ordinary differential equation (\ref{brainf1}) explicitly. Note that $\varphi_1:=A^+_{p\eta}$ satisfies
\begin{align}\label{ODE}
\begin{aligned}
-2\beta A^+_p\varphi_1-\varphi_{1\eta\eta}=0.
\end{aligned}
\end{align} 
Recall $A^+_p=-\frac{1}{2}f'\big(\sqrt{\frac{\beta}{2}\eta}\big)$ with
$$
f'(\tilde{\eta})=3\tanh^2\Big(\frac{\tilde{\eta}}{\sqrt{2}}+\mathrm{artanh}\sqrt{\frac{2}{3}}\hspace{1mm}\Big)-2.
$$
Therefore $\varphi_1(\eta)<0$ for $\eta\geqslant0$ and $\underset{\eta\rightarrow\infty}{\lim}\varphi_1(\eta) e^{\sqrt{\beta}\eta}=-c$, where $c$ is a positive constant. The other independent solution to (\ref{ODE}) can be $\varphi_2(\eta)=\varphi_1(\eta)\int_0^\eta\frac{1}{\varphi_1^2(\eta')}\dd\eta'$. It is easy to check that $\underset{\eta\rightarrow\infty}{\lim}\varphi_2(\eta) e^{-\sqrt{\beta}\eta}=-c'$, where $c'$ is a positive constant. Let $F_b^{1,+}=-2\beta I^{0,+}A_b^{0,+}$. Then $A_b^{1,+}$ can be represented as
\begin{align}\label{ODEsolu}
\begin{aligned}
A_b^{1,+}(\eta)=C_1\varphi_1(\eta)-\varphi_1(\eta)\int_0^\eta\varphi_2(\eta')F_b^{1,+}(\eta')\dd\eta'-\varphi_2(\eta)\int_\eta^\infty\varphi_1(\eta')F_b^{1,+}(\eta')\dd\eta'.
\end{aligned}
\end{align}
Here $C_1$ is a constant given by $C_1\varphi_1(0)=I^{0,+}$ to match the boundary condition  $A_b^{1,+}|_{\eta=0}=I^{0,+}$. (\ref{ODEsolu}) shows that if $F_b^{1,+}(\eta)e^{\lambda\eta}$ is bounded for $0<\lambda\leqslant\sqrt{\beta}$, then $A_b^{1,+}(\eta)e^{\lambda\eta}$ is bounded. The estimates on high-order derivatives are similar, thus, (\ref{Abe}) holds for $c_1<\min\{\sqrt{\al},\sqrt{\beta}\}$.

Now we estimate $u_b^1$. Set
\begin{align}\nonumber
\begin{aligned}
\tilde{u}:=&u_b^1-u_{c}\\
         :=&u_b^1-\frac{1}{Q}\Phi^1_{e\psi}|_{\psi=-1}\chi_2(\eta)-\chi_1(\xi)\Big(A_b^{1,+}-I^{0,+}\chi_2(\eta)\Big)-\chi_1(-\xi)\Big(A_b^{1,-}-I^{0,-}\chi_2(\eta)\Big),
\end{aligned}
\end{align}
where $\chi_2\in C^\infty[0,\infty)$ is a cut-off function satisfying $\chi_2|_{[0,\frac{1}{2}]}=1$, $\chi_2|_{[1,\infty)}=0$, and $0\leqslant\chi_2\leqslant1$. It follows that $\tilde{u}|_{\eta=0}=\tilde{u}|_{\eta\rightarrow\infty}=0$ and $\underset{\xi\rightarrow\infty}{\lim}\tilde{u}=0$. Denote $\phi=-\int_0^\eta \tilde{u}(\xi,\eta')\dd\eta'$. It then holds in $(-\infty,\infty)\times(0,\infty)$,
\begin{align}\label{lpraphi}
\left\{
\begin{aligned}
&-u_p\phi_{\xi\eta}-u_{p\xi}\phi_\eta+Au_p\phi_\eta-v_p\phi_{\eta\eta}+u_{p\eta}\phi_\xi+\phi_{\eta\eta\eta}=\f,\\
&\phi|_{\xi\rightarrow\infty}=0,\\
&\phi|_{\eta=0}=\phi_\eta|_{\eta=0}=0,\quad \underset{\eta\rightarrow\infty}{\lim}\phi_\eta=0,
\end{aligned}
\right.
\end{align}
where
\begin{align}\nonumber
\begin{aligned}
\f=F_b^1-u_p u_{c\xi}-u_{p\xi}u_c+Au_pu_c-v_pu_{c\eta}+u_{p\eta}\int_0^\eta u_{c\xi}\dd\eta'+u_{c\eta\eta}.
\end{aligned}
\end{align}
It follows from the estimates (\ref{infe}), (\ref{zere}), (\ref{Euler1e}), (\ref{GD2}) and (\ref{Abe}) that
$$
\int_{-\infty}^\infty\int_{0}^\infty\Big|\p^k_\xi\p^l_\eta \f\Big|^2\cosh\Big(\frac{\mu\xi}{8}\Big)e^{2c_1\eta}\dd\eta\dd\xi< \infty,
$$
for $c_1<\min\{c_0,\sqrt{\mu}\}$, $k\geqslant0$, and $l\geqslant0$. 

Now we derive a prior estimate for $\phi$. Let $[\bar{u},\bv]=[u_p,v_p]$, $\hh=\frac{\phi}{\bar{u}}$. (\ref{prapro}) implies $-u_p\geqslant m_0\eta$ when $\eta\leqslant\eta_0$, and  $-u_p\geqslant m_0\eta_0$ when $\eta\geqslant\eta_0$. Thus, $\hh$ is well-defined due to $\phi|_{\eta=0}=0$. Then (\ref{lpraphi}) now reads 
\begin{align}\label{lprahh}
\left\{
\begin{aligned}
&-\big[\bu^2\hh_\eta\big]_\xi+\p^3_\eta\big[\bu\hh\big]-\bv\big[\bu\hh\big]_{\eta\eta}+A\bu\big[\bu\hh\big]_\eta+\bu\bv_{\eta\eta}\hh=\f,\\
&\hh|_{\xi\rightarrow\infty}=0,\\
&\hh|_{\eta=0}=0,\quad \underset{\eta\rightarrow\infty}{\lim}\hh_\eta=0,
\end{aligned}
\right.
\end{align}
where $\underset{\eta\rightarrow\infty}{\lim}\hh_\eta=0$ follows from $\underset{\eta\rightarrow\infty}{\lim}\phi_\eta=0$ and $\bu_{\eta}$ decays fast. Set $\rho=\cosh\Big(\frac{\mu\xi}{8}\Big)e^{2c_1\eta}$ with $c_1$ being a small constant to be chosen later. Multiplying the first equation in (\ref{lprahh}) by $\hh_\eta\rho$ and integrating in $(\xi^*,\infty)\times(0,\infty)$ give
\begin{align}\label{hhall}
\begin{aligned}
&\int_{\xi^*}^\infty\int_0^\infty\Big(-\big[\bu^2\hh_\eta\big]_\xi+\p^3_\eta\big[\bu\hh\big]-\bv\big[\bu\hh\big]_{\eta\eta}+A\bu\big[\bu\hh\big]_\eta+\bu\bv_{\eta\eta}\hh\Big)\hh_\eta\rho\dd\eta\dd\xi\\
=&\int_{\xi^*}^\infty\int_0^\infty \f\hh_\eta\rho\dd\eta\dd\xi.
\end{aligned}
\end{align}
We deal with the left hand side of (\ref{hhall}) term by terms. One can get by integrating by parts that
\begin{align}\label{hhes1}
\begin{aligned}
&\int_{\xi^*}^\infty\int_0^\infty-\big[\bu^2\hh_\eta\big]_\xi\hh_\eta\rho\dd\eta\dd\xi\\
=&\frac{1}{2}\int_0^\infty \bu^2\hh_\eta^2\rho\dd\eta\bigg|_{\xi=\xi^*}+\int_{\xi^*}^\infty\int_0^\infty \Big(-\bu\bu_\xi\hh_\eta^2\rho+\frac{1}{2}\bu^2\hh_\eta^2\rho_\xi\Big)\dd\eta\dd\xi.
\end{aligned}
\end{align}
\begin{align}\label{hhes2}
\begin{aligned}
&\int_{\xi^*}^\infty\int_0^\infty\p^3_\eta\big[\bu\hh\big]\hh_\eta\rho\dd\eta\dd\xi\\
=&\int_{\xi^{*}}^\infty -\bu_{\eta}\hh_\eta^2\rho\dd\xi\bigg|_{\eta=0}+\int_{\xi^*}^\infty\int_0^\infty \Big(-\bu\hh_{\eta\eta}^2\rho+2\bu_{\eta\eta}\hh_\eta^2\rho-\frac{1}{2}\bu_{\eta\eta\eta\eta}\hh^2\rho\Big)\dd\eta\dd\xi\\
&+\int_{\xi^*}^\infty\int_0^\infty \Big(-\frac{1}{2}\bu_\eta\hh_\eta^2\rho_\eta+\frac{1}{2}\bu\hh_\eta^2\rho_{\eta\eta}-\frac{1}{2}\bu_{\eta\eta\eta}\hh^2\rho_\eta\Big)\dd\eta\dd\xi.
\end{aligned}
\end{align}
\begin{align}\label{hhes3}
\begin{aligned}
&\int_{\xi^*}^\infty\int_0^\infty-\bv\big[\bu\hh\big]_{\eta\eta}\hh_\eta\rho\dd\eta\dd\xi\\
=&\int_{\xi^*}^\infty\int_0^\infty \Big(-\frac{3}{2}\bv\bu_\eta\hh_\eta^2\rho+\frac{1}{2}\bu\bv_\eta\hh_\eta^2\rho+\frac{1}{2}\bv\bu_{\eta\eta\eta}\hh^2\rho+\frac{1}{2}\bv_\eta\bu_{\eta\eta}\hh^2\rho\Big)\dd\eta\dd\xi\\
 &+\int_{\xi^*}^\infty\int_0^\infty \Big(\frac{1}{2}\bv\bu\hh_\eta^2\rho_\eta+\frac{1}{2}\bv\bu_{\eta\eta}\hh^2\rho_\eta\Big)\dd\eta\dd\xi.\\
\end{aligned}
\end{align}
\begin{align}\label{hhes4}
\begin{aligned}
&\int_{\xi^*}^\infty\int_0^\infty A\bu\big[\bu\hh\big]_\eta\hh_\eta\rho\dd\eta\dd\xi\\
=&\int_{\xi^*}^\infty\int_0^\infty \Big(A\bu^2\hh_\eta^2\rho-\frac{1}{2}A\bu\bu_{\eta\eta}\hh^2\rho-\frac{1}{2}A\bu_\eta^2\hh^2\rho\Big)\dd\eta\dd\xi\\
 &+\int_{\xi^*}^\infty\int_0^\infty -\frac{1}{2}A\bu\bu_\eta\hh^2\rho_\eta\dd\eta\dd\xi.
\end{aligned}
\end{align}
\begin{align}\label{hhes5}
\begin{aligned}
&\int_{\xi^*}^\infty\int_0^\infty \bu\bv_{\eta\eta}\hh\hh_\eta\rho\dd\eta\dd\xi\\
=&\int_{\xi^*}^\infty\int_0^\infty \Big(-\frac{1}{2}\bu\bv_{\eta\eta\eta}\hh^2\rho-\frac{1}{2}\bu_\eta\bv_{\eta\eta}\hh^2\rho\Big)\dd\eta\dd\xi+\int_{\xi^*}^\infty\int_0^\infty -\frac{1}{2}\bu\bv_{\eta\eta}\hh^2\rho_\eta\dd\eta\dd\xi.
\end{aligned}
\end{align}
Collect (\ref{hhes1}) - (\ref{hhes5}) to get
\begin{align}\nonumber
&\int_{\xi^*}^\infty\int_0^\infty\Big(-\big[\bu^2\hh_\eta\big]_\xi+\p^3_\eta\big[\bu\hh\big]-\bv\big[\bu\hh\big]_{\eta\eta}+A\bu\big[\bu\hh\big]_\eta+\bu\bv_{\eta\eta}\hh\Big)\hh_\eta\rho\dd\eta\dd\xi\\\nonumber
=&\frac{1}{2}\int_0^\infty \bu^2\hh_\eta^2\rho\dd\eta\bigg|_{\xi=\xi^*}+\int_{\xi^{*}}^\infty -\bu_{\eta}\hh_\eta^2\rho\dd\xi\bigg|_{\eta=0}+\int_{\xi^*}^\infty\int_0^\infty-\bu\hh_{\eta\eta}^2\rho\dd\eta\dd\xi\\\nonumber
&+\int_{\xi^*}^\infty\int_0^\infty\Big(-\frac{3}{2}\bu\bu_\xi-\frac{3}{2}\bv\bu_{\eta}+2\bu_{\eta\eta}+A\bu^2\Big)\hh_\eta^2\rho\dd\eta\dd\xi\\\nonumber
&+\int_{\xi^*}^\infty\int_0^\infty\Big(-\bu_{\eta\eta\eta\eta}+\bv\bu_{\eta\eta\eta}+\bv_\eta\bu_{\eta\eta}-A\bu\bu_{\eta\eta}-A\bu_\eta^2-\bu\bv_{\eta\eta\eta}-\bu_\eta\bv_{\eta\eta}\Big)\frac{\hh^2\rho}{2}\dd\eta\dd\xi\\\nonumber
&+\int_{\xi^*}^\infty\int_0^\infty\Big(\bu^2\rho_\xi-\bu_\eta\rho_\eta+\bu\rho_{\eta\eta}+\bv\bu\rho_\eta\Big)\frac{\hh_\eta^2}{2}\dd\eta\dd\xi\\\nonumber
&+\int_{\xi^*}^\infty\int_0^\infty\Big(\bu_{\eta\eta\eta}\rho_\eta+\bv\bu_{\eta\eta}\rho_\eta-A\bu\bu_\eta\rho_\eta-\bu\bv_{\eta\eta}\rho_\eta\Big)\frac{\hh^2}{2}\dd\eta\dd\xi.
\end{align}
Since $-\bu_\eta\geqslant0$ and $-\bu\geqslant0$, thus $-\bu_{\eta}\hh_\eta^2\rho$ and $-\bu\hh_{\eta\eta}^2\rho$ are non-negative. $[\bu,\bv]$ satisfies the Prandtl equation:
$$ \bu\bu_\xi+\bv\bu_\eta-\frac{1}{2}A\bu^2-\bu_{\eta\eta}+\frac{1}{8}A=0,
$$
and $\bar{u}_{\eta\eta}\geqslant0$.
Thus,
\begin{align}\nonumber
\begin{aligned}
-\frac{3}{2}\bu\bu_\xi-\frac{3}{2}\bv\bu_{\eta}+2\bu_{\eta\eta}+A\bu^2=\frac{1}{4}A\bu^2+\frac{1}{2}\bu_{\eta\eta}+\frac{3}{16}A\geqslant \frac{3}{16}A\geqslant\frac{3}{8}\mu.
\end{aligned}
\end{align}
And
\begin{align}\nonumber
\begin{aligned}
&-\bu_{\eta\eta\eta\eta}+\bv\bu_{\eta\eta\eta}+\bv_\eta\bu_{\eta\eta}-A\bu\bu_{\eta\eta}-A\bu_\eta^2-\bu\bv_{\eta\eta\eta}-\bu_\eta\bv_{\eta\eta}\\
=&\p^2_\eta\Big[\bu\bu_\xi+\bv\bu_\eta-\frac{1}{2}A\bu-\bu_{\eta\eta}\Big]=0.
\end{aligned}
\end{align}
So one can obtain
\begin{align}\nonumber
&\int_{\xi^*}^\infty\int_0^\infty\Big(-\big[\bu^2\hh_\eta\big]_\xi+\p^3_\eta\big[\bu\hh\big]-\bv\big[\bu\hh\big]_{\eta\eta}+A\bu\big[\bu\hh\big]_\eta+\bu\bv_{\eta\eta}\hh\Big)\hh_\eta\rho\dd\eta\dd\xi\\\nonumber
\geqslant&\int_{\xi^*}^\infty\int_0^\infty-\bu\hh_{\eta\eta}^2\rho\dd\eta\dd\xi+\int_{\xi^*}^\infty\int_0^\infty \frac{3\mu}{8}\hh_\eta^2\rho\dd\eta\dd\xi\\\nonumber
&+\int_{\xi^*}^\infty\int_0^\infty\Big(\bu^2\rho_\xi-\bu_\eta\rho_\eta+\bu\rho_{\eta\eta}+\bv\bu\rho_\eta\Big)\frac{\hh_\eta^2}{2}\dd\eta\dd\xi\\\nonumber
&+\int_{\xi^*}^\infty\int_0^\infty\Big(\bu_{\eta\eta\eta}\rho_\eta+\bv\bu_{\eta\eta}\rho_\eta-A\bu\bu_\eta\rho_\eta-\bu\bv_{\eta\eta}\rho_\eta\Big)\frac{\hh^2}{2}\dd\eta\dd\xi.
\end{align}
Note that $|\bu^2\rho_\xi|\leqslant\frac{\mu}{32}\rho$ and $|-\bu_\eta\rho_\eta+\bu\rho_{\eta\eta}+\bv\bu\rho_\eta|\leqslant \big(\|\bu_\eta\|_{L^\infty}c_1+\frac{c_1^2}{2}+\|\bv\|_{L^\infty}\frac{c_1}{2}\big)\rho$. One has
\begin{align}\nonumber
&\bigg|\int_{\xi^*}^\infty\int_0^\infty\Big(\bu^2\rho_\xi-\bu_\eta\rho_\eta+\bu\rho_{\eta\eta}+\bv\bu\rho_\eta\Big)\frac{\hh_\eta^2}{2}\dd\eta\dd\xi\bigg|\\\nonumber
\leqslant &(\frac{3\mu}{32}+Mc_1)\int_{\xi^*}^\infty\int_0^\infty \hh_\eta^2\rho\dd\eta\dd\xi,
\end{align}
where $M$ is independent of $c_1$. If $c_1\leqslant \frac{c_0}{2}$, then
$$\big|\bu_{\eta\eta\eta}\rho_\eta+\bv\bu_{\eta\eta}\rho_\eta-A\bu\bu_\eta\rho_\eta-\bu\bv_{\eta\eta}\rho_\eta\big|\leqslant Mc_1e^{-c_0\eta}e^{c_1\eta}\leqslant \frac{M'c_1}{\eta^2},
$$
which yields
\begin{align}\nonumber
\begin{aligned}
&\bigg|\int_{\xi^*}^\infty\int_0^\infty\Big(\bu_{\eta\eta\eta}\rho_\eta+\bv\bu_{\eta\eta}\rho_\eta-A\bu\bu_\eta\rho_\eta-\bu\bv_{\eta\eta}\rho_\eta\Big)\frac{\hh^2}{2}\dd\eta\dd\xi\bigg|\\
\leqslant &\frac{M'c_1}{2}\int_{\xi^*}^\infty\int_0^\infty \frac{\hh^2}{\eta^2}\dd\eta\dd\xi\\
\leqslant &2M'c_1\int_{\xi^*}^\infty\int_0^\infty\hh_\eta^2\dd\eta\dd\xi\\
\leqslant &2M'c_1\int_{\xi^*}^\infty\int_0^\infty\hh_\eta^2\rho\dd\eta\dd\xi.
\end{aligned}
\end{align}
Choosing $c_1$ small enough gives
\begin{align}\nonumber
&\int_{\xi^*}^\infty\int_0^\infty\Big(-\big[\bu^2\hh_\eta\big]_\xi+\p^3_\eta\big[\bu\hh\big]-\bv\big[\bu\hh\big]_{\eta\eta}+A\bu\big[\bu\hh\big]_\eta+\bu\bv_{\eta\eta}\hh\Big)\hh_\eta\rho\dd\eta\dd\xi\\\nonumber
\geqslant&\int_{\xi^*}^\infty\int_0^\infty-\bu\hh_{\eta\eta}^2\rho\dd\eta\dd\xi+\int_{\xi^*}^\infty\int_0^\infty \frac{\mu}{4}\hh_\eta^2\rho\dd\eta\dd\xi.
\end{align}
Thus, obtains 
\begin{align}\nonumber
\begin{aligned}
\int_{\xi^*}^\infty\int_0^\infty-\bu\hh_{\eta\eta}^2\rho\dd\eta\dd\xi+\int_{\xi^*}^\infty\int_0^\infty \hh_\eta^2\rho\dd\eta\dd\xi\leqslant C(\mu)\int_{\xi^*}^\infty\int_0^\infty \f^2\rho\dd\eta\dd\xi.
\end{aligned}
\end{align}
Taking the limit $\xi^*\rightarrow-\infty$ in tha above inequality yields 
\begin{align}\nonumber
\begin{aligned}
\int_{-\infty}^\infty\int_0^\infty-\bu\hh_{\eta\eta}^2\rho\dd\eta\dd\xi+\int_{-\infty}^\infty\int_0^\infty \hh_\eta^2\rho\dd\eta\dd\xi\leqslant C(\mu)\int_{-\infty}^\infty\int_0^\infty \f^2\rho\dd\eta\dd\xi.
\end{aligned}
\end{align}
Differentiating the first equation in (\ref{lprahh}) in $\xi$ and applying a similar analysis to $\hh_\xi$ imply
\begin{align}\nonumber
\begin{aligned}
\int_{-\infty}^\infty\int_0^\infty-\bu\hh_{\xi\eta\eta}^2\rho\dd\eta\dd\xi+\int_{-\infty}^\infty\int_0^\infty \hh_{\xi\eta}^2\rho\dd\eta\dd\xi\leqslant C(\mu)\int_{-\infty}^\infty\int_0^\infty \Big(\f^2+\f_\xi^2\Big)\rho\dd\eta\dd\xi.
\end{aligned}
\end{align}
Since $\phi=\bu\hh$, it is easy to check
\begin{align}\nonumber
\begin{aligned}
\int_{-\infty}^\infty\int_0^\infty \Big(\phi_\eta^2+\phi_{\eta\eta}^2\Big)\rho\dd\eta\dd\xi\leqslant C\int_{-\infty}^\infty\int_0^\infty \f^2\rho\dd\eta\dd\xi,
\end{aligned}
\end{align}
and 
\begin{align}\nonumber
\begin{aligned}
\int_{-\infty}^\infty\int_0^\infty \Big(\phi_{\xi\eta}^2+\phi_{\xi\eta\eta}^2\Big)\rho\dd\eta\dd\xi\leqslant C\int_{-\infty}^\infty\int_0^\infty \Big(\f^2+\f_\xi^2\Big)\rho\dd\eta\dd\xi.
\end{aligned}
\end{align}
The estimate on $\phi_{\eta\eta\eta}$ follows directly from the (\ref{lpraphi}). The estimates on high-order derivatives in (\ref{ube}) are similar, so we proved Lemma \ref{brandtl linear}.
\end{proof}

With $u_b^1$ at hand, one can set $\Phi_b^1(\xi,\eta)=\int_\eta^\infty u_b^1(\xi,\eta')\dd\eta'$.
 
Next we construct $\Phi_e^m$ and $\Phi_b^m$ for $2\leqslant m\leqslant n$. $\Phi_e^m$ solves the following problem:
\begin{equation}\label{Eulerm}
\left\{
\begin{aligned}
&-\Delta_{\xi,\psi}\hspace{1mm}\Phi_e^m=0,\\
&\Phi_e^m|_{\psi=\pm1}=\pm Q\Phi_b^{m-1}(\xi,0),\\
&\underset{\xi\rightarrow\pm\infty}{\lim}\Phi_e^m=QI^{m-1,\pm}\psi,
\end{aligned}
\right.
\end{equation} 
where $I^{m-1,\pm}=\underset{\xi\rightarrow\pm\infty}{\lim}\Phi_b^{m-1}(\xi,0)$. 

Let $[u_b^m,v_b^m]=[-\Phi_{b\eta}^m,\Phi_{b\xi}^m]$. Then $[u_b^m,v_b^m]$ solves the following problem:
\begin{align}\label{brandtlm}
\left\{
\begin{aligned}
&u_pu_{b\xi}^m+u_b^mu_{p\xi}-Au_pu_{b}^m+v_pu_{b\eta}^m+\big[v_b^m-v_b^m|_{\eta=0}\big]u_{p\eta}-u_{b\eta\eta}^m=F^m_b,\\
&u_{b\xi}^m+v_{b\eta}^m=0,\\
&u_b^m|_{\eta=0}=\frac{1}{Q}\Phi_{e\psi}^{m}|_{\psi=-1},\quad \underset{\eta\rightarrow\infty}{\lim}u_b^m=0,
\end{aligned}
\right.
\end{align}
where the explicit form of $F_b^m$ is given in the appendix. Set $[A_b^{m,\pm},0]=\underset{\xi\rightarrow\pm\infty}{\lim}[u_b^m,v_b^m]$. It follows from taking the limits as $\xi\rightarrow\pm\infty$ in (\ref{brandtlm}) that
\begin{equation}\label{Abmeq}
\left\{
\begin{aligned}
&-2\beta A^+_pA_{b}^{m,+}-A_{b\eta\eta}^{m,+}=F_b^{m,+},\\
&A_{b}^{m,+}|_{\eta=0}=I^{m-1,+},\quad \underset{\eta\rightarrow\infty}{\lim}A_b^{m,+}=0,
\end{aligned}
\right.
\end{equation} 
where 
\begin{align}\nonumber
\begin{aligned}
F_b^{m,+}=&-\beta\overset{m-1}{\underset{j=1}{\sum}}\Big(I^{m-1-j,+}\big(A_b^{j,+}-I^{j-1,+}\big)-I^{m-1-j,+}A_b^{j,+}\Big)\\
&-2\beta I^{m-1,+}A_b^0+4\beta^2A_b^{m-2,+},
\end{aligned}
\end{align}
and
\begin{equation}\label{Abmeq2}
\left\{
\begin{aligned}
&-2\al A^-_pA_{b}^{m,-}-A_{b\eta\eta}^{m,-}=F_b^{m,-},\\
&A_{b}^{m,-}|_{\eta=0}=I^{m-1,-},\quad \underset{\eta\rightarrow\infty}{\lim}A_b^{m,-}=0,
\end{aligned}
\right.
\end{equation} 
where 
\begin{align}\nonumber
\begin{aligned}
F_b^{m,-}=&-\al\overset{m-1}{\underset{j=1}{\sum}}\Big(I^{m-1-j,-}\big(A_b^{j,-}-I^{j-1,-}\big)-I^{m-1-j,-}A_b^{j,-}\Big)\\
&-2\al I^{m-1,-}A_b^0+4\al^2A_b^{m-2,-}.
\end{aligned}
\end{align}
$u_b^m$ can be analyzed exactly as for $u_b^1$. With $u_b^m$ so determined, one can set $\Phi_b^m(\xi,\eta)=\int_\eta^\infty u_b^m(\xi,\eta')\dd\eta'$ and $v_b^m=\Phi_{b\xi}^m$. We conclude the following proposition for the approximate profiles.
\begin{proposition} \label{approxi}
For $1\leqslant m\leqslant n$, (\ref{Eulerm}) and (\ref{brandtlm}) admit smooth solutions $\Phi_e^m$ and $[u_b^m,v_b^m]$ respectively, which satisfy the following estimates:
\begin{align}\label{Eulermes}
\begin{aligned}
&\Big|\p_\xi^k\p_\psi^l\big[\Phi_e^m(\xi,\psi)-QI^{m-1,+}\psi\big]\Big| \leqslant C_{k,l,m}e^{-\frac{\mu\xi}{16}}Q,\quad \text{for } (\xi,\psi)\in[0,\infty)\times[-1,1],\\
&\Big|\p_\xi^k\p_\psi^l\big[\Phi_e^m(\xi,\psi)-QI^{m-1,-}\psi\big]\Big| \leqslant C_{k,l,m}e^{\frac{\mu\xi}{16}}Q,\quad \text{for } (\xi,\psi)\in(-\infty,0)\times[-1,1],
\end{aligned}
\end{align}
where $k$ and $l$ non-negative integers, and 
\begin{align}\label{brandtlmes}
\begin{aligned}
&\Big|\p_\xi^k\p_\eta^l\big[u_b^m(\xi,\eta)-A_b^{m,+}(\eta)\big]\Big| \leqslant C_{k,l,m}e^{-c_m\eta}e^{-\frac{\mu\xi}{16}},\quad \text{for } (\xi,\eta)\in[0,\infty)\times[0,\infty),\\
&\Big|\p_\xi^k\p_\eta^l\big[u_b^m(\xi,\eta)-A_b^{m,-}(\eta)\big]\Big| \leqslant C_{k,l,m}e^{-c_m\eta}e^{\frac{\mu\xi}{16}},\quad \text{for } (\xi,\eta)\in(-\infty,0)\times[0,\infty),
\end{aligned}
\end{align}
where $c_m$ is positive constant, and $A_b^{m,\pm}$ are the solutions to the equations (\ref{Abmeq}) and (\ref{Abmeq2}) respectively, such that
\begin{align}
\begin{aligned}
\Big|\frac{\dd^l}{\dd \eta^l}A_b^{m,\pm}\Big|\leqslant C_{l}e^{-c_m\eta},\quad \text{for } \eta\in [0,\infty).
\end{aligned}
\end{align}
\end{proposition}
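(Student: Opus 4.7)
I will proceed by induction on $m$. The case $m=1$ is precisely Lemma \ref{brandtl linear} together with its Euler counterpart (the $\Phi_e^1$ estimate coming from the elliptic analysis analogous to Lemma \ref{psi-E-est}), so assume the full conclusion of Proposition \ref{approxi} holds up through order $m-1$. The induction hypothesis gives in particular that $\Phi_b^{m-1}(\xi,0)-I^{m-1,\pm}$ and all its $\xi$-derivatives decay like $e^{\mp\mu\xi/16}$ as $\xi\rightarrow\pm\infty$, and that the forcing terms $F_b^{m,\pm}$ (a polynomial expression in the $A_b^{j,\pm}$ with $j<m$) decay exponentially in $\eta$.

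\textbf{Euler corrector at order $m$.} Set $\widetilde{\Phi}_e^m:=\Phi_e^m-Q\Phi_b^{m-1}(\xi,0)\psi$ so that the problem (\ref{Eulerm}) is recast as a Poisson problem with zero Dirichlet data on $\psi=\pm1$, vanishing limits at $\xi=\pm\infty$, and a source $Q\psi\p_\xi^2\Phi_b^{m-1}(\xi,0)$ which inherits the exponential $\xi$-decay from the induction hypothesis. I will then multiply by $\widetilde{\Phi}_e^m\cosh(\mu\xi/16)$ and integrate over $\mathbb{R}\times(-1,1)$ exactly as in the proof of Lemma \ref{psi-E-est}: the Poincar\'{e} inequality in $\psi$ absorbs the weight derivative since $\mu/16<1$, and one obtains a weighted $H^1$ bound, which iterated for higher-order derivatives yields (\ref{Eulermes}) by Sobolev embedding. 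Once $\Phi_e^m$ is in hand, the Dirichlet data $\frac{1}{Q}\Phi_{e\psi}^m|_{\psi=-1}$ for the $m$-th Prandtl corrector is a smooth function of $\xi$ with the correct asymptotics $I^{m-1,\pm}$ at $\pm\infty$.

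\textbf{Prandtl corrector at order $m$.} First I solve the two ODE limit problems (\ref{Abmeq}) and (\ref{Abmeq2}) by the representation formula of Lemma \ref{brandtl linear}: the homogeneous equation $-2\beta A_p^+\varphi-\varphi_{\eta\eta}=0$ admits the two independent solutions $\varphi_1=A_{p\eta}^+$ and $\varphi_2=\varphi_1\int_0^\eta\varphi_1^{-2}$, which satisfy $\varphi_1\sim e^{-\sqrt{\beta}\eta}$ and $\varphi_2\sim e^{\sqrt{\beta}\eta}$. Since the induction hypothesis gives $|F_b^{m,\pm}|\lesssim e^{-c_{m-1}\eta}$, the variation-of-parameters formula produces $A_b^{m,\pm}$ decaying at rate $c_m:=\min\{c_{m-1},\sqrt{\mu}\}/2$, and similarly for derivatives. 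Next, setting
\[
\tilde u:=u_b^m-\tfrac{1}{Q}\Phi_{e\psi}^m|_{\psi=-1}\chi_2(\eta)-\chi_1(\xi)\bigl(A_b^{m,+}-I^{m-1,+}\chi_2(\eta)\bigr)-\chi_1(-\xi)\bigl(A_b^{m,-}-I^{m-1,-}\chi_2(\eta)\bigr)
\]
gives a quantity vanishing at $\eta=0$, $\eta=\infty$, and $\xi=\pm\infty$, and satisfying the same linearized Prandtl equation (\ref{brandtlm}) with a modified source $\mathcal{F}^m$ that is square-integrable against the weight $\cosh(\mu\xi/16)e^{2c_m\eta}$. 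I then repeat verbatim the quotient argument $\hbar=\phi/\bar{u}$ of Lemma \ref{brandtl linear} on (\ref{lprahh}), using that the algebraic identity
\[
-\tfrac{3}{2}\bar{u}\bar{u}_\xi-\tfrac{3}{2}\bar{v}\bar{u}_\eta+2\bar{u}_{\eta\eta}+A\bar{u}^2\geqslant\tfrac{3}{8}\mu
\]
and the cancellation $\partial_\eta^2[\bar{u}\bar{u}_\xi+\bar{v}\bar{u}_\eta-\tfrac{1}{2}A\bar{u}-\bar{u}_{\eta\eta}]=0$ depend only on $[u_p,v_p]$ and not on $m$, so the positivity of the favourable-pressure term plus the concavity $\bar{u}_{\eta\eta}\geqslant0$ delivers the same weighted $H^2$ bound for $\phi=-\int_0^\eta\tilde u$. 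Higher-order estimates follow by differentiating in $\xi$, and pointwise decay (\ref{brandtlmes}) follows from Sobolev embedding in the two weighted variables.

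\textbf{Main obstacle.} The principal difficulty is bookkeeping of the decay rates and the verification that the source $F_b^m$ in (\ref{brandtlm}) is indeed weighted-$L^2$ with the claimed rate $e^{-\mu\xi/16}e^{-c_m\eta}$. Because $F_b^m$ contains mixed terms involving $u_b^j$, $\Phi_e^j$, $v_p$, and normal derivatives of $u_p$ with $j<m$, each of which decays at its own rate, one has to check that the minimum of these rates dominates $\mu/16$ in $\xi$ (which is the slowest rate appearing, coming from the Euler correctors) and dominates a common $c_m>0$ in $\eta$ (obtained by halving $c_{m-1}$ at each step so the induction closes). The slight degradation of the rate from $\mu/8$ at $m=1$ to $\mu/16$ in (\ref{Eulermes}) is precisely the room needed to absorb the weight derivatives when applying the energy method, and identifying this choice of exponents is the only genuinely delicate point; everything else is a direct replay of the $m=1$ arguments.
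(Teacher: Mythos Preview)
Your proposal is correct and follows essentially the same route as the paper: the paper proves the case $m=1$ in detail (the elliptic estimate for $\Phi_e^1$ and Lemma \ref{brandtl linear} for $[u_b^1,v_b^1]$ via the quotient $\hh=\phi/\bar u$ with the weighted energy argument you describe), and then simply states that ``$u_b^m$ can be analyzed exactly as for $u_b^1$'' before recording Proposition \ref{approxi}. Your induction scheme, the subtraction $\widetilde\Phi_e^m=\Phi_e^m-Q\Phi_b^{m-1}(\xi,0)\psi$, the variation-of-parameters representation for $A_b^{m,\pm}$, and the reuse of the two algebraic identities from the Prandtl equation are precisely the ingredients the paper employs; the halving of the $\xi$-exponent from the $L^2$ weight $\cosh(\mu\xi/8)$ to the pointwise rate $e^{-\mu\xi/16}$ via Sobolev is also what the paper is doing implicitly.
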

Now an approximate solution is constructed in the following form:
\begin{align}\nonumber
\begin{aligned}
\Phi_s=&\underset{m=0}{\overset{n}{\sum}}\Big(\eq\Big)^m\Phi_e^m(\xi,\psi)+\underset{m=0}{\overset{n-1}{\sum}}\Big(\eq\Big)^{m+1}Q\Phi_b^m\big(\xi,\qe(\psi+1)\big)\chi_2(\psi+1)\\
&-\underset{m=0}{\overset{n-1}{\sum}}\Big(\eq\Big)^{m+1}Q\Phi_b^{m}\big(\xi,\qe(1-\psi)\big)\chi_2(1-\psi)\\
&-\Big(\eq\Big)^{n+1}\chi_2(\psi+1)\int_0^{\qe(\psi+1)}Qu_b^n(\xi,\eta')\dd\eta'\\
&+\Big(\eq\Big)^{n+1}\chi_2(1-\psi)\int_0^{\qe(1-\psi)}Qu_b^n(\xi,\eta')\dd\eta',
\end{aligned}
\end{align}
where $\chi_2\in C^\infty[0,\infty)$ is a cut-off function satisfying $\chi_2|_{[0,\frac{1}{2}]}=1$, $\chi_2|_{[1,\infty)}=0$. It is easy to check that $\Phi_s$ satisfies 
$$\Phi_s|_{\psi=\pm1}=\pm\frac{Q}{2},\quad \Phi_{s\psi}|_{\psi=\pm1}=0.$$
Set 
\begin{align}\nonumber
\begin{aligned}
R_s:=&\Phi_{s\psi}\Delta_{\xi,\psi}\Phi_{s\xi}+\frac{J_\xi}{J}\Phi_{s\psi}\Delta_{\xi,\psi}\Phi_{s}-\Phi_{s\xi}\Delta_{\xi,\psi}\Phi_{s\psi}-\frac{J_\psi}{J}\Phi_{s\xi}\Delta_{\xi,\psi}\Phi_{s}\\
&+\frac{\e}{J}\Delta_{\xi,\psi}\big(J\Delta_{\xi,\psi}\Phi_s\big).
\end{aligned}
\end{align}
Then 
$$\|R_s\|_{L^\infty\big(\mathbb{R}\times(-1,1)\big)}\leqslant C\Big(\eq\Big)^nQ^2.$$
However, it must be noted that $R_s$ does not vanish as $\xi\rightarrow\pm\infty$. Actually,
\begin{align}\nonumber
 &\underset{\xi\rightarrow\infty}{\lim}\frac{J_\xi}{J}=-2\beta,\quad  \underset{\xi\rightarrow\infty}{\lim}\frac{J_{\xi\xi}}{J}=4\beta^2\\\nonumber
&\underset{\xi\rightarrow\infty}{\lim}\frac{J_\psi}{J}=0,\quad  \underset{\xi\rightarrow\infty}{\lim}\frac{J_{\xi\xi}}{J}=0,\\\nonumber
&\underset{\xi\rightarrow\infty}{\lim}\Phi_{s\xi}=0,
\end{align}
hence 
\begin{align}\nonumber
 &\underset{\xi\rightarrow\infty}{\lim}R_s=-2\beta\Phi^+_{s\psi}\Phi^+_{s\psi\psi}+\e\Phi^+_{s\psi\psi\psi\psi}+4\beta\e\Phi_{s\psi\psi}^+,
\end{align}
where $\Phi^+_s=\underset{\xi\rightarrow\infty}{\lim}\Phi_s\not\equiv0$. Thus, $R_s\notin L^2{\big(\mathbb{R}\times(-1,1)\big)}$. It leads to our analysis at $\xi\rightarrow\pm\infty$ of the Navier-Stokes equations in the next section.

\section{The analysis at $\xi\rightarrow\pm\infty$ of the Navier-Stokes equations}\label{infty behavior}
In this section, we analyze the Navier-Stokes equations as $\xi\rightarrow\pm\infty$. Recall the equivalent formulation of the Navier-Stokes problem, (\ref{NSP}) in $(-\infty,\infty)\times(-1,1)$:
\begin{equation}\nonumber
\left\{
\begin{aligned}
&J\Phi_\psi\p_\xi\big(J\Delta_{\xi,\psi}\Phi\big)-J\Phi_\xi\p_\psi\big(J\Delta_{\xi,\psi}\Phi\big)+\e J\Delta_{\xi,\psi}\big(J\Delta_{\xi,\psi}\Phi\big)=0, \\
&\Phi|_{\psi=1}=\frac{Q}{2}, \quad\Phi|_{\psi=-1}=-\frac{Q}{2},\\
&\Phi_\psi|_{\psi=1}=\Phi_\psi|_{\psi=-1}=0.
\end{aligned}
\right.
\end{equation}
Multiplying the first equation above by $\frac{1}{J^2}$ yields
\begin{align}\label{Phiinf}
\begin{aligned}
\Phi_{\psi}\Delta_{\xi,\psi}\Phi_{\xi}+\frac{J_\xi}{J}\Phi_{\psi}\Delta_{\xi,\psi}\Phi-\Phi_{\xi}\Delta_{\xi,\psi}\Phi_{\psi}-\frac{J_\psi}{J}\Phi_{\xi}\Delta_{\xi,\psi}\Phi+\frac{\e}{J}\Delta_{\xi,\psi}\big(J\Delta_{\xi,\psi}\Phi\big)=0.
\end{aligned}
\end{align}
(\ref{asybJ1}) and (\ref{asybJ2}) imply
\begin{align}\nonumber
\begin{aligned}
\underset{\xi\rightarrow\infty}{\lim}\frac{J_\xi}{J}=-2\beta,\quad \underset{\xi\rightarrow\infty}{\lim}\frac{J_{\xi\xi}}{J}=4\beta^2, \quad\underset{\xi\rightarrow\infty}{\lim}\frac{J_\psi}{J}=\underset{\xi\rightarrow\infty}{\lim}\frac{J_{\psi\psi}}{J}=0.
\end{aligned}
\end{align}
Taking the limit in (\ref{Phiinf}) as $\xi\rightarrow\infty$ gives
\begin{equation}\label{Ape}
\left\{
\begin{aligned}
&-2\beta A^+\frac{\dd A^+}{\dd \psi}-\e\Big(\frac{\dd^3 A^+}{\dd\psi^3}+4\beta^2\frac{\dd A^+}{\dd \psi}\Big)=0,\\
&A^+(-1)=A^+(1)=0, \\
&\int_{-1}^1 A^+(\psi)\dd\psi=-Q,
\end{aligned}
\right.
\end{equation}
where $[A^+,0]=\underset{\xi\rightarrow\infty}{\lim}[-\Phi_\psi,\Phi_\xi]$. Recall the construction of the approximate solution in Section \ref{high-order}, there exists an approximate solution to the equation (\ref{Ape}) of the form: 
\begin{align}\nonumber
\begin{aligned}
A_s^+(\psi)=&Q\Big(-\frac{1}{2}+A_b^{0,+}\big(\qe(\psi+1)\big)+A_b^{0,+}\big(\qe(1-\psi)\big)\Big)\\
            &+\underset{m=1}{\overset{n}{\sum}}Q\Big(-I^{m-1,+}+A_b^{m,+}\big(\qe(\psi+1)\big)+A_b^{m,+}\big(\qe(1-\psi)\big)\Big).
\end{aligned}
\end{align}
Now the main result in this section can be stated as follows: 
\begin{proposition} \label{inftyest}
There exists a constant $\delta>0$, such that for $0<\e\leqslant \delta Q$, the problem (\ref{Ape}) has a smooth solution $A^+$ satisfying
\begin{align}\label{apesti}
\begin{aligned}
&\Big\|\frac{\dd^l}{\dd \psi^l}\big[A^+-A_s^+\big]\Big\|_{L^\infty}\leqslant C\Big(\eq\Big)^{n+1-l}Q,
\end{aligned}
\end{align}
where $l=0$, $1$, $2$, $3$, and $C$ is a constant independent of $\e$ and $Q$. 
\end{proposition}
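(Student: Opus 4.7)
The plan is to write $A^+ = A_s^+ + R$, derive a nonlinear equation for the remainder $R$, and solve for $R$ via the Banach contraction principle on a small ball in a suitable $\varepsilon$-weighted Sobolev space. The main steps are: first, establish that the residual $LA_s^+$ left behind by the approximate solution is small; second, invert the linearization $\mathcal{L}R := -\varepsilon R''' - 4\beta^2\varepsilon R' - 2\beta(A_s^+ R)'$ uniformly in $\varepsilon/Q$; third, close the nonlinear argument through a fixed-point iteration.

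The residual is small essentially by construction. The approximate solution built in Section \ref{high-order} is designed so that, in the $\xi\to\infty$ limit of the Navier-Stokes equation (\ref{Phiinf}), the error collapses to $LA_s^+$. Combining this with the exponential decay estimates of Proposition \ref{approxi} yields $\|\partial_\psi^l(LA_s^+)\|_{L^\infty(-1,1)} = O((\varepsilon/Q)^{(n+1-l)/2}Q^2)$ for the relevant orders, with the boundary values $A_s^+(\pm 1)$ and the defect $\int_{-1}^1 A_s^+\,d\psi + Q$ being exponentially small in $\varepsilon/Q$. The remainder then satisfies $\mathcal{L}R = -LA_s^+ + 2\beta RR'$ with approximately homogeneous boundary data (the exponentially small inhomogeneities can be absorbed into the forcing via a cut-off correction).

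The heart of the proof is the uniform invertibility of $\mathcal{L}$. Introducing the primitive $\Psi(\psi) := \int_{-1}^\psi R(\psi')\,d\psi'$ converts the problem into a fourth-order ODE $-\varepsilon\Psi'''' - 4\beta^2\varepsilon\Psi'' - 2\beta(A_s^+\Psi')' = f$ for $\Psi$ with the four homogeneous boundary conditions $\Psi(\pm 1) = \Psi'(\pm 1) = 0$ (the conditions at $\psi = 1$ coming from the flux constraint $\int R\,d\psi = 0$ together with $R(\pm 1) = 0$). Mirroring the quotient trick sketched in the introduction, one then studies $h := \Psi/A_s^+$; this ratio is regular up to $\psi = \pm 1$ since $A_s^+$ vanishes linearly there with slope of order $\sqrt{Q^3/\varepsilon}$ inherited from the Prandtl profile, while $\Psi$ vanishes quadratically. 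Testing a rewritten equation for $h$ against a suitably chosen weight, and exploiting the strict negativity of $A_s^+$ in the interior together with the favorable-pressure sign (\ref{FPC}) and the structural properties of $A_b^{0,\pm}$ from Proposition \ref{PRAE}, should yield a coercive lower-order term that absorbs the dangerous contributions from $(A_s^+)' = O(\sqrt{Q^3/\varepsilon})$ inside the boundary layers. This produces an estimate of the form $\|R\|_{X_\varepsilon} \leq C\|f\|_{Y_\varepsilon}$ in appropriate $\varepsilon$-weighted Sobolev norms.

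With the coercivity in hand, the map $TR := \mathcal{L}^{-1}(-LA_s^+ + 2\beta RR')$ is a contraction on the ball $\{R : \|R\|_{X_\varepsilon} \leq C_0(\varepsilon/Q)^{(n+1)/2}Q\}$ once $\varepsilon/Q$ is sufficiently small, the quadratic nonlinearity being controlled by $\|R\|_{L^\infty}\|R'\|$ via Sobolev embedding on the bounded interval $[-1,1]$. Banach's fixed-point theorem then yields the unique solution $R$, and the $X_\varepsilon$-bound translates directly into the $L^\infty$ bounds on $R, R', R'', R'''$ claimed in (\ref{apesti}). The main obstacle is the coercivity step: naive energy estimates lose powers of $\varepsilon/Q$ because of the sharp gradient of $A_s^+$ in the boundary layers, and the quotient trick must be combined carefully with the favorable-pressure sign and the Prandtl layer structure to recover a uniform estimate; once that is achieved, the remainder of the argument follows standard lines.
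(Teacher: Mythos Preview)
Your proposal is essentially the same strategy the paper follows: write the remainder equation, pass to the primitive $\Xi=-\int_{-1}^\psi \mathcal A\,d\psi'$, invert the linearization via the quotient $H=\Xi/(\text{approximate profile})$ exploiting strict negativity and the Prandtl structure, and close by contraction. The coercivity computation you anticipate is exactly Lemma~\ref{inflinest} in the paper.

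One technical point worth noting: rather than working directly with $A_s^+$ and absorbing the exponentially small boundary and flux defects into the forcing, the paper first builds a refined approximate solution $\bar A_s^+$ that (i) includes cut-offs so that $\bar A_s^+(\pm 1)=0$ and $\int_{-1}^1\bar A_s^+\,d\psi=-Q$ hold exactly, and (ii) carries two additional expansion terms ($n'=2$) so that the residual $\bar R_s^+$ is of order $(\sqrt{\varepsilon/Q})^{\,n+2}Q^2$ rather than $(\sqrt{\varepsilon/Q})^{\,n+1}Q^2$. This extra margin makes the contraction close on the ball $\|\Xi\|_{\mathbb X}\le (\sqrt{\varepsilon/Q})^{\,n+3/2}Q$ without any bookkeeping of borderline powers; after the fixed point is found, one recovers the stated $L^\infty$ bounds on $\mathcal A,\mathcal A_\psi,\mathcal A_{\psi\psi},\mathcal A_{\psi\psi\psi}$ and then passes back to $A_s^+$ via $\|\partial_\psi^l(\bar A_s^+-A_s^+)\|_{L^\infty}\le C(\sqrt{\varepsilon/Q})^{\,n+1-l}Q$. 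Your variant would also work but requires slightly more care in tracking the orders.
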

In order to prove Proposition \ref{inftyest}, we construct a higher-order approximate solution to (\ref{Ape}). For $1\leqslant m'\leqslant n'$, where $n'$ is a positive integer to be chosen later, denote by $A_b^{n+m',+}$ the solution to the following problem:
\begin{equation}\label{higherap}
\left\{
\begin{aligned}
&-2\beta A^+_pA_{b}^{n+m',+}-A_{b\eta\eta}^{n+m',+}=F_b^{n+m',+},\\
&A_{b}^{n+m',+}|_{\eta=0}=I^{n+m'-1,+},\quad \underset{\eta\rightarrow\infty}{\lim}A_b^{n+m',+}=0,
\end{aligned}
\right.
\end{equation} 
where 
\begin{align}\nonumber
\begin{aligned}
I^{n+m'-1}=\int_0^\infty A_b^{n+m'-1,+}(\eta)\dd\eta,
\end{aligned}
\end{align}
and
\begin{align}\nonumber
\begin{aligned}
F_b^{n+m',+}=&-\beta\overset{n+m'-1}{\underset{j=1}{\sum}}\Big(I^{n+m'-1-j,+}\big(A_b^{j,+}-I^{j-1,+}\big)-I^{n+m'-1-j,+}A_b^{j,+}\Big)\\
&-2\beta I^{n+m'-1,+}A_b^0+4\beta^2A_b^{n+m'-2,+}.
\end{aligned}
\end{align}
Now we define 
\begin{align}\nonumber
\bar{A}_s^+(\psi):=&Q\Big(-\frac{1}{2}+A_b^{0,+}\big(\qe(\psi+1)\big)\chi_2(\psi+1)+A_b^{0,+}\big(\qe(1-\psi)\big)\chi_2(1-\psi)\Big)\\\nonumber
           &+\underset{m=1}{\overset{n+n'}{\sum}}-\Big(\eq\Big)^{m}QI^{m-1,+}+\underset{m=1}{\overset{n+n'}{\sum}}\Big(\eq\Big)^{m}QA_b^{m,+}\big(\qe(\psi+1)\big)\chi_2(\psi+1)\\\nonumber
            &+\underset{m=1}{\overset{n+n'}{\sum}}\Big(\eq\Big)^{m}QA_b^{m,+}\big(\qe(1-\psi)\big)\chi_2(1-\psi)\\\nonumber
            &-\underset{m=0}{\overset{n+n'-1}{\sum}}\Big(\eq\Big)^{m+1}Q\chi_2'(\psi+1)\int_{\qe(\psi+1)}^\infty A_b^{m,+}(\eta')\dd\eta'\\\nonumber
            &-\underset{m=0}{\overset{n+n'-1}{\sum}}\Big(\eq\Big)^{m+1}Q\chi_2'(1-\psi)\int_{\qe(1-\psi)}^\infty A_b^{m,+}(\eta')\dd\eta'\\\nonumber
            &+\Big(\eq\Big)^{n+n'+1}Q\chi_2'(\psi+1)\int_0^{\qe(\psi+1)}A_b^{m,+}(\eta')\dd\eta'\\\nonumber
            &+\Big(\eq\Big)^{n+n'+1}Q\chi_2'(1-\psi)\int_0^{\qe(1-\psi)}A_b^{m,+}(\eta')\dd\eta' .
\end{align}
Then it can be checked that $\bar{A}_s^+|_{\psi=\pm1}=0$ and 
$$\int_{-1}^1 \bar{A}_s^+(\psi)\dd\psi=-Q.$$
Let
$$\bar{R}_s^+:=-2\beta \bar{A}_s^+\frac{\dd }{\dd \psi}\bar{A}_s^+-\e\big(\frac{\dd^3 }{\dd\psi^3}\bar{A}_s^++4\beta^2\frac{\dd}{\dd \psi}\bar{A}_s^+\big).$$
Then it holds that
$$\big\|\bar{R}_s^+\big\|_{L^\infty}\leqslant C\Big(\eq\Big)^{n+n'}Q^2.$$
It is enough to choose $n'=2$ in the following. Note that $A_b^{m,+}(\eta)$ decays fast as $\eta\rightarrow\infty$, $\chi_2$ is support on $[0,1]$, and $\chi_2|_{[0,\frac{1}{2}]}=1$. One can get from the construction that   
\begin{align}\label{apesti2}
\begin{aligned}
&\Big\|\frac{\dd^l}{\dd \psi^l}\big[\bar{A}_s^+-A_s^+\big]\Big\|_{L^\infty}\leqslant C_l\Big(\eq\Big)^{n+1-l}Q,
\end{aligned}
\end{align}
when $l$ is any non-negative integer. Now we construct a solution $A^+$ to the equation (\ref{Ape}), which is close to $\bar{A}_s^+$. Let $\mathcal{A}=A^+-\bar{A}_s^+$. $\mathcal{A}$ solves the following problem in $(-1,1)$:
\begin{equation}\label{mApe}
\left\{
\begin{aligned}
&-2\beta \bar{A}_s^+\frac{\dd \mathcal{A}}{\dd \psi}-2\beta \mathcal{A}\frac{\dd\bar{A}_s^+ }{\dd \psi}-\e\Big(\frac{\dd^3 \mathcal{A}}{\dd\psi^3}+4\beta^2\frac{\dd\mathcal{A}}{\dd \psi}\Big)=-\bar{R}_s^++2\beta \mathcal{A}\frac{\dd\mathcal{A} }{\dd \psi},\\
&\mathcal{A}(-1)=\mathcal{A}(1)=0, \\
&\int_{-1}^1 \mathcal{A}(\psi)\dd\psi=0.
\end{aligned}
\right.
\end{equation}
Set $\Xi=-\int_{-1}^\psi \mathcal{A}(\psi')\dd\psi' $. Then it holds that
\begin{equation}\label{Xien}
\left\{
\begin{aligned}
&-2\beta \bar{A}_s^+\frac{\dd^2 \Xi}{\dd \psi^2}-2\beta \frac{\dd \bar{A}_s^+}{\dd \psi}\frac{\dd\Xi }{\dd \psi}-\e\Big(\frac{\dd^4\Xi }{\dd\psi^4}+4\beta^2\frac{\dd^2\Xi}{\dd \psi^2}\Big)=\bar{R}_s^+-2\beta \frac{\dd\Xi }{\dd \psi}\frac{\dd^2 \Xi}{\dd \psi^2},\\
&\Xi(\pm1)=\Xi'(\pm1)=0.
\end{aligned}
\right.
\end{equation}
Consider the linearized problem: 
\begin{equation}\label{Xie}
\left\{
\begin{aligned}
&-2\beta \bar{A}_s^+\frac{\dd^2 \Xi}{\dd \psi^2}-2\beta \frac{\dd \bar{A}_s^+}{\dd \psi}\frac{\dd\Xi }{\dd \psi}-\e\Big(\frac{\dd^4\Xi }{\dd\psi^4}+4\beta^2\frac{\dd^2\Xi}{\dd \psi^2}\Big)=F^+,\\
&\Xi(\pm1)=\Xi'(\pm1)=0.
\end{aligned}
\right.
\end{equation}
The key ingredient of this section is the following lemma. 
\begin{lemma}\label{inflinest}
There exists a constant $\delta>0$, such that for $0<\e\leqslant \delta Q$, ({\ref{Xie}}) admits a solution $\Xi$ satisfying
 \begin{align}\nonumber
\begin{aligned}
\big\|\Xi_\psi\big\|_{L^2(-1,1)}+\eq \big\|\Xi_{\psi\psi}\big\|_{L^2(-1,1)}\leqslant \frac{C}{Q}\big\|F^+\big\|_{L^2(-1,1)},
\end{aligned}
\end{align}
where $C$ is a constant independent of $\e$ and $Q$.
\end{lemma}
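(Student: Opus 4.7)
The plan is to establish the a priori estimate via a single energy identity with test function $-\Xi$, then obtain existence from a standard Galerkin scheme driven by the same estimate. Multiplying \eqref{Xie} by $-\Xi$ and integrating over $(-1,1)$, one uses $\Xi(\pm1)=\Xi'(\pm1)=0$ to kill every boundary term produced by integration by parts. The crucial algebraic observation is that the two convective contributions $\int 2\beta\bar{A}_{s}^{+}\Xi''\Xi$ and $\int 2\beta(\bar{A}_{s}^{+})'\Xi'\Xi$ combine, after one integration by parts in the first, into the single term $-\int 2\beta\bar{A}_{s}^{+}(\Xi')^{2}$; in particular $(\bar{A}_{s}^{+})'$ does not appear, which is essential since this derivative has size $Q^{3/2}\e^{-1/2}$ inside the boundary layer. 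The resulting identity reads
\begin{equation*}
\int_{-1}^{1}(-2\beta\bar{A}_{s}^{+})(\Xi')^{2}\,d\psi+\e\|\Xi''\|_{L^{2}(-1,1)}^{2}=4\beta^{2}\e\|\Xi'\|_{L^{2}(-1,1)}^{2}-\int_{-1}^{1}F^{+}\Xi\,d\psi.
\end{equation*}

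The next step is to show that the left side dominates $Q\|\Xi'\|_{L^{2}}^{2}+\e\|\Xi''\|_{L^{2}}^{2}$. The construction in Section \ref{high-order} gives $\bar{A}_{s}^{+}\leq0$ with $-2\beta\bar{A}_{s}^{+}\geq cQ$ on the interior set $\{\psi:1-|\psi|\geq K\eq\}$ for a fixed $K$, while $-2\beta\bar{A}_{s}^{+}\geq 0$ throughout $(-1,1)$. This handles the interior, and on each boundary layer (say $(-1,-1+K\eq)$) I would write $\Xi'(\psi)=\int_{-1}^{\psi}\Xi''\,d\psi'$ and use Cauchy--Schwarz to obtain $|\Xi'(\psi)|^{2}\leq(\psi+1)\int_{-1}^{\psi}(\Xi'')^{2}\,d\psi'$, whose $\psi$-integration yields $Q\int_{-1}^{-1+K\eq}(\Xi')^{2}\,d\psi\leq C\e\int_{-1}^{-1+K\eq}(\Xi'')^{2}\,d\psi$, and similarly near $\psi=1$. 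Adding the interior and boundary-layer bounds to the identity gives $Q\|\Xi'\|_{L^{2}}^{2}+\e\|\Xi''\|_{L^{2}}^{2}\lesssim \e\|\Xi'\|_{L^{2}}^{2}+\|F^{+}\|_{L^{2}}\|\Xi\|_{L^{2}}$. For $\e\leq\delta Q$ with $\delta$ small the first term on the right is absorbed on the left; Poincar\'e's inequality $\|\Xi\|_{L^{2}}\leq\|\Xi'\|_{L^{2}}$ (from $\Xi(\pm1)=0$) together with Young's inequality then closes the bound and yields $\|\Xi'\|_{L^{2}}+\eq\,\|\Xi''\|_{L^{2}}\leq(C/Q)\|F^{+}\|_{L^{2}}$, as claimed.

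For existence, I would run a Galerkin approximation in an orthonormal basis of $H_{0}^{2}(-1,1)$; the a priori estimate above holds uniformly in the Galerkin dimension, yielding weak $H^{2}$-compactness, and standard fourth-order ODE regularity on the bounded interval upgrades the weak limit to a smooth solution. Uniqueness is immediate from the same estimate with $F^{+}=0$. The main obstacle is the boundary layer, where the favorable coefficient $-2\beta\bar{A}_{s}^{+}$ degenerates from $\beta Q$ down to $0$ over a region of width $\eq$. The estimate is delicate in that the scale $\e/Q$ must match the boundary layer width squared so that the no-slip condition combined with $\e\|\Xi''\|_{L^{2}}^{2}$ exactly recovers the missing portion of $Q\|\Xi'\|_{L^{2}}^{2}$; together with the $(\bar{A}_{s}^{+})'$-free cancellation identified above, these are the two pieces of structure the proof truly needs.
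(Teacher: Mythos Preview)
Your argument is correct and takes a genuinely different, more elementary route than the paper. The paper tests the equation against the quotient $H=\Xi/\bar A_s^+$ (well-defined because $\bar A_s^+<0$ with simple zeros at $\psi=\pm1$) and then relies on the Prandtl structure to produce the key coercivity $2\e\bar A_{s\psi\psi}^+ + 2\beta(\bar A_s^+)^2\geq \tfrac{\beta}{4}Q^2$ for the $H_\psi^2$ coefficient and to make the $H^2$ coefficient vanish up to $O(\sqrt{\e/Q})$ errors. Your approach exploits instead that the convective part $-2\beta\bar A_s^+\Xi''-2\beta(\bar A_s^+)'\Xi'=-2\beta(\bar A_s^+\Xi')'$ is in divergence form, so testing with $-\Xi$ kills the dangerous $(\bar A_s^+)'$ term for free; the degeneracy of the weight $-2\beta\bar A_s^+$ near $\psi=\pm1$ is then handled by the elementary Hardy-type bound $Q\int_{\text{bdy}}(\Xi')^2\leq \tfrac{K^2}{2}\e\int_{\text{bdy}}(\Xi'')^2$, which matches exactly because the layer has width $\sqrt{\e/Q}$. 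One point to make explicit in your write-up: when you ``add the boundary-layer bound to the identity'' you must scale it by a small enough constant (or choose $K$ so that the resulting factor $K^2/2$ times the interior coercivity constant is strictly less than $1$) so as not to exhaust the $\e\|\Xi''\|^2$ you already own; this is routine but should be stated. Also note that the negativity of $\bar A_s^+$ on $(-1,1)$ is verified in the paper within the proof of this lemma rather than in Section~\ref{high-order}, though it is indeed a direct consequence of the construction. What your approach buys is simplicity for this 1D lemma; what the paper's quotient approach buys is a template that carries over to the 2D stability estimate (Proposition~\ref{Keyp}), where the analogous divergence-form cancellation is unavailable and the quotient $G=\Pi/\bar U$ becomes essential.
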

\begin{proof}
At first, we show that $\BA$ is negative. Since $\BA$ is symmetric about $\psi=0$, it suffices to deal with the case $\psi\leqslant0$ only. For $\delta$ is small enough, in the region $0<\psi+1\leqslant \eq$, it holds that
 \begin{align}\nonumber
\begin{aligned}
&Q\Big(-\frac{1}{2}+A_b^{0,+}\big(\qe(\psi+1)\big)\chi_2(\psi+1)\Big)\\
=&Q\Big(-\frac{1}{2}+A_b^{0,+}\big(\qe(\psi+1)\big)\Big)\\
=&QA_p^+\big(\qe(\psi+1)\big)\\
\leqslant &-cQ\qe(\psi+1)<0,
\end{aligned}
\end{align}
thus, $\BA(\psi)\leqslant -cQ\qe(\psi+1)+C Q(\psi+1)<0$; while in the region $\eq\leqslant\psi+1\leqslant 0$, since $A_b^{0,+}(\eta)$ decaying fast as $\eta\rightarrow\infty$ and $\chi_2|_{[0,\frac{1}{2}]}=1$, one has
 \begin{align}\nonumber
\begin{aligned}
&Q\Big(-\frac{1}{2}+A_b^{0,+}\big(\qe(\psi+1)\big)\chi_2(\psi+1)\Big)\\
=&Q\Big(A_p^+\big(\qe(\psi+1)\big)+A_b^{0,+}\big(\qe(\psi+1)\big)\big[\chi_2(\psi+1)-1\big]\Big)\\
\leqslant&Q\Big(-c+A_b^{0,+}\big(\qe(\psi+1)\big)\big[\chi_2(\psi+1)-1\big]\Big)\\
\leqslant &-\frac{1}{2}cQ<0,
\end{aligned}
\end{align}
thus, $\BA(\psi)\leqslant -\frac{1}{2}cQ+C Q\eq<0$. And it is easy to check that $\pm\frac{\dd \BA}{\dd \psi}|_{\psi=\pm1}>0$ due to $A_{p\eta}^{+}(0)<0$. Set $H=\frac{\Xi}{\BA}$. Then $H$ is well-defined, and $H(\pm1)=0$ follows from $\Xi|_{\psi=\pm1}=\Xi_\psi|_{\psi=\pm1}=0$. To estimate $H$, one can multiply the equation in (\ref{Xie}) by $H$, and integrate in $(-1,1)$ to obtain
 \begin{align}\label{Hint}
\begin{aligned}
\int_{-1}^1\Big(-2\beta \bar{A}_s^+ \Xi_{\psi\psi}-2\beta \bar{A}_{s\psi}^+\Xi_{\psi}-\e\big(\Xi_{\psi\psi\psi\psi}+4\beta^2\Xi_{\psi\psi}\big)\Big)H\dd\psi=\int_{-1}^1 HF^+\dd\psi.
\end{aligned}
\end{align}
Using integration by parts repeatedly and the suitable boundary conditions, one can get that
 \begin{align}\label{Hint1}
\begin{aligned}
\int_{-1}^1-2\beta \bar{A}_s^+\Xi_{\psi\psi}H\dd\psi=&\int_{-1}^12\beta\Big(\big(\BA H_\psi\big)^2-\BA\bar{A}_{s\psi\psi}^+H^2\Big) \dd\psi,
\end{aligned}
\end{align}
 \begin{align}\label{Hint2}
\begin{aligned}
\int_{-1}^1-2\beta\bar{A}_{s\psi}^+\Xi_{\psi} H\dd\psi=&\int_{-1}^1\beta\Big(\BA\bar{A}_{s\psi\psi}^+H^2-\big(\bar{A}_{s\psi}^+\big)^2H^2 \Big) \dd\psi,
\end{aligned}
\end{align}
\begin{align}\label{Hint3}
\begin{aligned}
&-\e\int_{-1}^1 \big[\BA H\big]_{\psi\psi\psi\psi} H\dd\psi\\
=&\e\int_{-1}^1 \Big(-\BA H_{\psi\psi}^2+2\bar{A}_{s\psi\psi}^+H_\psi^2-\frac{1}{2}\bar{A}_{s\psi\psi\psi\psi}^+H^2\Big)\dd\psi+\e \bar{A}_{s\psi}^+H_\psi^2\bigg|_{\psi=-1}^{\psi=1},
\end{aligned}
\end{align}
\begin{align}\label{Hint4}
\begin{aligned}
-\e\int_{-1}^14\beta^2\Xi_{\psi\psi} H\dd\psi=&\e\int_{-1}^14\beta^2 \Xi_\psi H_\psi \dd\psi.
\end{aligned}
\end{align}
Collecting (\ref{Hint1}) - (\ref{Hint4}) yields 
 \begin{align}\label{Hintall}
\begin{aligned}
&\int_{-1}^1\Big(-2\beta \bar{A}_s^+ \Xi_{\psi\psi}-2\beta \bar{A}_{s\psi}^+\Xi_{\psi}-\e\big(\Xi_{\psi\psi\psi\psi}+4\beta^2\Xi_{\psi\psi}\big)\Big)H\dd\psi\\
=&\int_{-1}^1 -\e\BA H_{\psi\psi}^2\dd\psi +\int_{-1}^1\Big(2\e\bar{A}_{s\psi\psi}^++2\beta\big(\BA \big)^2\Big) H_\psi^2\dd\psi\\
  &+\int_{-1}^1\Big(-\frac{\e}{2}\bar{A}_{s\psi\psi\psi\psi}^+ -\beta\BA\bar{A}_{s\psi\psi}^+-\beta\big(\bar{A}_{s\psi}^+\big)^2 \Big) H^2\dd\psi\\
&+\e \bar{A}_{s\psi}^+H_\psi^2\bigg|_{\psi=-1}^{\psi=1}+\e\int_{-1}^14\beta^2 \Xi_\psi H_\psi \dd\psi.
\end{aligned}
\end{align}
Since $\BA$ is negative and $\pm\frac{\dd \BA}{\dd \psi}|_{\psi=\pm1}>0$, one has 
$$\int_{-1}^1 -\e\BA H_{\psi\psi}^2\dd\psi\geqslant0,\quad \e \bar{A}_{s\psi}^+H_\psi^2\bigg|_{\psi=-1}^{\psi=1}\geqslant 0.$$ 
In the region $\psi\leqslant0$,
$$\Big|2\frac{\e}{Q}\frac{\bar{A}_{s\psi\psi}^+}{Q}+2\beta\Big(\frac{\BA}{Q} \Big)^2-2A_{p\eta\eta}^+-2\beta \big(A_p^+\big)^2\Big|\leqslant C\eq,$$
it then follows that 
\begin{align}\nonumber
\begin{aligned}
2\e\bar{A}_{s\psi\psi}^++2\beta\big(\BA \big)^2\geqslant &2Q^2\Big(A_{p\eta\eta}^++\beta\big(A_p^+\big)^2\Big)-C\eq Q^2\\
 =&\frac{\beta Q^2}{2}-C\eq Q^2\geqslant \frac{\beta Q^2}{4}
\end{aligned}
\end{align}
for $\eq$ small enough. The similar estimate holds for $\psi\geqslant 0$. Thus, 
$$\int_{-1}^1\Big(2\e\bar{A}_{s\psi\psi}^++2\beta\big(\BA \big)^2\Big) H_\psi^2\dd\psi\geqslant \frac{\beta Q^2}{4}\int_{-1}^1 H_\psi^2\dd\psi.$$
In the region $\psi\leqslant0$, it holds that
\begin{align}\nonumber
\begin{aligned}
&\bigg|-\big(\psi+1\big)^2\Big[\frac{\e}{2Q}\frac{\bar{A}_{s\psi\psi\psi\psi}^+}{Q} +\beta\frac{\BA}{Q}\frac{\bar{A}_{s\psi\psi}^+}{Q}+\beta\Big(\frac{\bar{A}_{s\psi}^+}{Q}\Big)^2\Big]\\
&+\eta^2\Big(\frac{1}{2}A_{p\eta\eta\eta\eta}^++\beta A_{p}^+A_{p\eta\eta}^++\beta\big(A_{p\eta}^+\big)^2\Big)\bigg|\leqslant C\eq,
\end{aligned}
\end{align}
consequently,
\begin{align}\nonumber
\begin{aligned}
&-\big(\psi+1\big)^2\Big(\frac{\e}{2}\bar{A}_{s\psi\psi\psi\psi}^+ -\beta\BA\bar{A}_{s\psi\psi}^+-\beta\big(\bar{A}_{s\psi}^+\big)^2\Big)\\
\geqslant&-Q^2\eta^2\Big(\frac{1}{2}A_{p\eta\eta\eta\eta}^++\beta A_{p}^+A_{p\eta\eta}^++\beta\big(A_{p\eta}^+\big)^2\Big)-C\eq Q^2\\
=&-C\eq Q^2.
\end{aligned}
\end{align}
Similarly, in the region $\psi\geqslant0$, one has
\begin{align}\nonumber
\begin{aligned}
&-\big(1-\psi\big)^2\Big(\frac{\e}{2}\bar{A}_{s\psi\psi\psi\psi}^+ -\beta\BA\bar{A}_{s\psi\psi}^+-\beta\big(\bar{A}_{s\psi}^+\big)^2\Big)\geqslant-C\eq Q^2.
\end{aligned}
\end{align}
Thus, it follows from the Hardy inequality that
\begin{align}\nonumber
\begin{aligned}
&\int_{-1}^1\Big(-\frac{\e}{2}\bar{A}_{s\psi\psi\psi\psi}^+ -\beta\BA\bar{A}_{s\psi\psi}^+-\beta\big(\bar{A}_{s\psi}^+\big)^2 \Big) H^2\dd\psi\\
&\geqslant -C\eq Q^2 \int_{-1}^1\bigg(\frac{ H}{(\psi+1)(1-\psi)}\bigg)^2\dd\psi\\
&\geqslant -C\eq Q^2 \int_{-1}^1H_\psi^2\dd\psi.
\end{aligned}
\end{align}
Note also that
\begin{align}\nonumber
\begin{aligned}
\e\int_{-1}^14\beta^2 \Xi_\psi H_\psi \dd\psi\geqslant &-C\e \Big(\big\|\BA H_\psi\big\|_{L^2}+ \big\|\bar{A}_{s\psi}^+H\big\|_{L^2}\Big)\|H\|_{L^2}\\
\geqslant &-C\frac{\e}{Q}Q^2\|H_\psi\|_{L^2}\|H\|_{L^2}-C\eq Q^2\|H\|^2_{L^2}\\
\geqslant &-C\eq Q^2\|H_\psi\|^2_{L^2}.
\end{aligned}
\end{align}
Therefore we can conclude from (\ref{Hintall}) and the estimates above that 
 \begin{align}\nonumber
\begin{aligned}
&\int_{-1}^1\Big(-2\beta \bar{A}_s^+ \Xi_{\psi\psi}-2\beta \bar{A}_{s\psi}^+\Xi_{\psi}-\e\big(\Xi_{\psi\psi\psi\psi}+4\beta^2\Xi_{\psi\psi}\big)\Big)H\dd\psi\\
\geqslant &\int_{-1}^1 -\e\BA H_{\psi\psi}^2\dd\psi+\frac{\beta Q^2}{4}\int_{-1}^1 H_\psi^2\dd\psi-C\eq Q^2\int_{-1}^1 H_\psi^2\dd\psi\\
\geqslant &\int_{-1}^1 -\e\BA H_{\psi\psi}^2\dd\psi+\frac{\beta Q^2}{8}\int_{-1}^1 H_\psi^2\dd\psi
\end{aligned}
\end{align}
for $\eq$ small enough. Thus, 
$$ \int_{-1}^1 -\e\BA H_{\psi\psi}^2\dd\psi+ Q^2\int_{-1}^1 H_\psi^2\dd\psi\leqslant \frac{C}{Q^2}\int_{-1}^1\big(F^+\big)^2\dd\psi.$$
Finally, observe that
 \begin{align}\nonumber
\begin{aligned}
\big\|\Xi_\psi\big\|_{L^2}\leqslant&\big\|\BA H_\psi\big\|_{L^2}+\big\| \bar{A}_{s\psi}^+ H\big\|_{L^2}\\
\leqslant&\big\|\BA\big\|_{L^\infty}\big\| H_\psi\big\|_{L^2}+\big\|(\psi+1)(1-\psi) \bar{A}_{s\psi}^+ \big\|_{L^\infty}\Big\|\frac{H}{(\psi+1)(1-\psi)}\Big\|_{L^2}\\
\leqslant &CQ\big\| H_\psi\big\|_{L^2}\\
\leqslant &\frac{C}{Q}\big\| F^+\big\|_{L^2},
\end{aligned}
\end{align}
 \begin{align}\nonumber
\begin{aligned}
\eq\big\|\Xi_{\psi\psi}\big\|_{L^2}\leqslant&\eq\big\|\BA H_{\psi\psi}\big\|_{L^2}+2\eq\big\| \bar{A}_{s\psi}^+ H_{\psi}\big\|_{L^2}+\eq\big\| \bar{A}_{s\psi\psi}^+ H\big\|_{L^2}\\
\leqslant&\big\|\sqrt{-\e\BA} H_{\psi\psi}\big\|_{L^2}+2\eq\big\|\bar{A}_{s\psi}^+\big\|_{L^\infty}\big\|H_{\psi}\big\|_{L^2}\\
         &+\eq\big\|(\psi+1)(1-\psi) \bar{A}_{s\psi\psi}^+ \big\|_{L^\infty}\Big\|\frac{H}{(\psi+1)(1-\psi)}\Big\|_{L^2}\\
\leqslant &\big\|\sqrt{-\e\BA} H_{\psi\psi}\big\|_{L^2}+CQ\big\| H_\psi\big\|_{L^2}\\
\leqslant &\frac{C}{Q}\big\| F^+\big\|_{L^2},
\end{aligned}
\end{align}
so the inequality in Lemma \ref{inflinest} follows.
\end{proof}

{\bf Proof of Proposition \ref{inftyest}:} We solve the problem (\ref{Xien}) by the method of the contraction mapping. Define
 \begin{align}\label{XiUpe}
\begin{aligned}
\big\|\Xi\big\|_{\X}=&\big\|\Xi_\psi\big\|_{L^2(-1,1)}+\eq\big\|\Xi_{\psi\psi}\big\|_{L^2(-1,1)}+\frac{\e}{Q}\big\|\Xi_{\psi\psi\psi}\big\|_{L^2(-1,1)}\\
&+\Big(\frac{\e}{Q}\Big)^{\frac{3}{2}}\big\|\Xi_{\psi\psi\psi\psi}\big\|_{L^2(-1,1)}.
\end{aligned}
\end{align}
Define a map $\mathcal{T}:H^4(-1,1)\rightarrow H^4(-1,1)$ as $\T(\Xi)=\Up$, where $\Up$ solves
\begin{equation}\label{Xieup}
\left\{
\begin{aligned}
&-2\beta \bar{A}_s^+\frac{\dd^2 \Up}{\dd \psi^2}-2\beta \frac{\dd \bar{A}_s^+}{\dd \psi}\frac{\dd\Up }{\dd \psi}-\e\Big(\frac{\dd^4\Up }{\dd\psi^4}+4\beta^2\frac{\dd^2\Up}{\dd \psi^2}\Big)=\bar{R}_s^+-2\beta \frac{\dd\Xi }{\dd \psi}\frac{\dd^2 \Xi}{\dd \psi^2},\\
&\Up(\pm1)=\Up'(\pm1)=0.
\end{aligned}
\right.
\end{equation}
Let 
$$B=\bigg\{\Xi\in H^4(-1,1):\big\|\Xi\big\|_{\X}\leqslant \Big(\eq\Big)^{n+\frac{3}{2}}Q\bigg\}.$$
We will show that $\T$ is a contractive mapping on $B$ provided that $\big\|\bar{R}_s^+\big\|_{L^\infty}\leqslant C\Big(\eq\Big)^{n+2}Q^2.$ Set $F^+=\bar{R}_s^+-2\beta\Xi_\psi \Xi_{\psi\psi}$. By virtue of Lemma \ref{inflinest}, it holds
$$\big\|\Up_\psi\big\|_{L^2(-1,1)}+\eq\big\|\Up_{\psi\psi}\big\|_{L^2(-1,1)}\leqslant \frac{C}{Q}\big\|F^+\big\|_{L^2(-1,1)}.$$
It follows from (\ref{Xieup}) that
 \begin{align}\nonumber
\begin{aligned}
\frac{\e}{Q}\big\|\Up_{\psi\psi\psi\psi}\big\|_{L^2(-1,1)}\leqslant& \frac{2\beta}{Q}\big\|\BA\big\|_{L^\infty(-1,1)}\big\|\Up_{\psi\psi}\big\|_{L^2(-1,1)}+\frac{2\beta}{Q}\big\|\bar{A}_{s\psi}^+\big\|_{L^\infty(-1,1)}\big\|\Up_{\psi}\big\|_{L^2(-1,1)}\\
&+\frac{4\beta^2\e}{Q}\big\|\Up_{\psi\psi}\big\|_{L^2(-1,1)}+\frac{1}{Q}\big\|F^+\big\|_{L^2(-1,1)}\\
\leqslant& C\big\|\Up_{\psi\psi}\big\|_{L^2(-1,1)}+\qe\big\|\Up_{\psi}\big\|_{L^2(-1,1)}\\
&+\frac{C\e}{Q}\big\|\Up_{\psi\psi}\big\|_{L^2(-1,1)}+\frac{1}{Q}\big\|F^+\big\|_{L^2(-1,1)}\\
\leqslant& \frac{C}{Q}\qe\big\|F^+\big\|_{L^2(-1,1)},
\end{aligned}
\end{align}
namely, $\big(\frac{\e}{Q}\big)^\frac{3}{2}\big\|\Up_{\psi\psi\psi\psi}\big\|_{L^2(-1,1)}\leqslant \frac{C}{Q}\big\|F^+\big\|_{L^2(-1,1)}$. The Gagliardo-Nirenberg inequality implies
 \begin{align}\nonumber
\begin{aligned}
\frac{\e}{Q}\big\|\Up_{\psi\psi\psi}\big\|_{L^2(-1,1)}\leqslant &\frac{C\e}{Q}\big\|\Up_{\psi\psi}\big\|_{L^2(-1,1)}+\frac{C\e}{Q}\big\|\Up_{\psi\psi}\big\|_{L^2(-1,1)}^\frac{1}{2}\big\|\Up_{\psi\psi\psi\psi}\big\|_{L^2(-1,1)}^\frac{1}{2}\\
\leqslant &\frac{C}{Q}\big\|F^+\big\|_{L^2(-1,1)}.
\end{aligned}
\end{align}
Hence one gets
$$\big\|\Up\big\|_{\X}\leqslant\frac{C}{Q}\big\|F^+\big\|_{L^2(-1,1)}.$$
Note that
 \begin{align}\nonumber
\begin{aligned}
2\beta\big\|\Xi_\psi\Xi_{\psi\psi}\big\|_{L^2(-1,1)}\leqslant& C\big\|\Xi_\psi\big\|_{L^\infty(-1,1)}\big\|\Xi_{\psi\psi}\big\|_{L^2(-1,1)}\\
 \leqslant &C\Big(\big\|\Xi_\psi\big\|_{L^2(-1,1)}+\big\|\Xi_{\psi}\big\|^\frac{1}{2}_{L^2(-1,1)}\big\|\Xi_{\psi\psi}\big\|^\frac{1}{2}_{L^2(-1,1)}\Big)\big\|\Xi_{\psi\psi}\big\|_{L^2(-1,1)} \\
\leqslant& C\Big(\frac{\e}{Q}\Big)^{-\frac{3}{4}}\big\|\Xi\big\|_{\X}^2.
\end{aligned}
\end{align}
If $\Xi\in B$, then
 \begin{align}\nonumber
\begin{aligned}
\big\|\Up\big\|_{\X}\leqslant &\frac{1}{Q}\big\|\bar{R}_s^+\big\|_{L^2(-1,1)}+\frac{2\beta}{Q}\big\|\Xi_\psi\Xi_{\psi\psi}\big\|_{L^2(-1,1)}\\
\leqslant &\frac{C}{Q}\big\|\bar{R}_s^+\big\|_{L^\infty(-1,1)}+\frac{C}{Q}\Big(\frac{\e}{Q}\Big)^{-\frac{3}{4}}\big\|\Xi\big\|_{\X}^2\\
\leqslant&C\Big(\eq\Big)^{n+2}Q+C\Big(\eq\Big)^{2n+\frac{3}{2}}Q\\
\leqslant&\Big(\eq\Big)^{n+\frac{3}{2}}Q
\end{aligned}
\end{align}
for $\eq$ small enough. Thus $\T(B)\subset B$. If $\Xi_1$, $\Xi_2\in B$,
\begin{align}\nonumber
\begin{aligned}
\big\|\T(\Xi_1-\Xi_2)\big\|_{\X}\leqslant &\frac{C}{Q}\big\|\Xi_{1\psi}\Xi_{1\psi\psi}-\Xi_2\Xi_{2\psi\psi}\big\|_{L^2(-1,1)}\\
\leqslant&\frac{C}{Q}\big\|\Xi_{1\psi}\big(\Xi_{1\psi\psi}-\Xi_{2\psi\psi}\big)\big\|_{L^2(-1,1)}+\frac{C}{Q}\big\|\big(\Xi_{1\psi}-\Xi_{2\psi}\big)\Xi_{2\psi\psi}\big\|_{L^2(-1,1)}\\
\leqslant&\frac{C}{Q}\big\|\Xi_{1\psi}\big\|_{L^\infty(-1,1)}\big\|\Xi_{1\psi\psi}-\Xi_{2\psi\psi}\big\|_{L^2(-1,1)}\\
 &+\frac{C}{Q}\big\|\Xi_{1\psi}-\Xi_{2\psi}\big\|_{L^\infty(-1,1)}\big\|\Xi_{2\psi\psi}\big\|_{L^2(-1,1)}\\
\leqslant& \frac{C}{Q}\Big(\frac{\e}{Q}\Big)^{-\frac{3}{4}}\Big(\big\|\Xi_1\big\|_{\X}+\big\|\Xi_2\big\|_{\X}\Big)\big\|\Xi_{1}-\Xi_{2}\big\|_{\X}\\
\leqslant& C\Big(\eq\Big)^{n}\big\|\Xi_{1}-\Xi_{2}\big\|_{\X},
\end{aligned}
\end{align}
so $\T$ is a contractive mapping on $B$ if $\eq$ is small enough. Thus, the contraction mapping theorem implies that there exists a $\Xi\in B$ such that $\T(\Xi)=\Xi$. Thus the problem (\ref{Xien}) admits a solution $\Xi\in B$. Finally, note that
\begin{align}\nonumber
\begin{aligned}
&\big\|\mathcal{A}\big\|_{L^\infty(-1,1)}= \big\|\Xi_\psi\big\|_{L^\infty(-1,1)}\leqslant C\Big(\frac{\e}{Q}\Big)^{-\frac{1}{4}}\big\|\Xi\big\|_{\X}\leqslant C\Big(\eq\Big)^{n+1}Q,\\
&\big\|\mathcal{A}_\psi\big\|_{L^\infty(-1,1)}= \big\|\Xi_{\psi\psi}\big\|_{L^\infty(-1,1)}\leqslant C\Big(\frac{\e}{Q}\Big)^{-\frac{3}{4}}\big\|\Xi\big\|_{\X}\leqslant C\Big(\eq\Big)^{n}Q,\\
&\big\|\mathcal{A}_{\psi\psi}\big\|_{L^\infty(-1,1)}= \big\|\Xi_{\psi\psi\psi}\big\|_{L^\infty(-1,1)}\leqslant C\Big(\frac{\e}{Q}\Big)^{-\frac{5}{4}}\big\|\Xi\big\|_{\X}\leqslant C\Big(\eq\Big)^{n-1}Q,
\end{aligned}
\end{align}
and by the equation in (\ref{mApe}),
\begin{align}\nonumber
\begin{aligned}
&\frac{\e}{Q}\big\|\mathcal{A}_{\psi\psi\psi}\big\|_{L^\infty(-1,1)}\\
=&\frac{1}{Q}\big\|-\bar{R}_s^++2\beta\mathcal{A}\mathcal{A}_{\psi}+2\beta \BA\mathcal{A}_{\psi}+2\beta \bar{A}_{s\psi}^+\mathcal{A}+4\e\beta^2\mathcal{A}_\psi\big\|_{L^\infty(-1,1)}\\
\leqslant&C\Big(\eq\Big)^{n}Q.
\end{aligned}
\end{align}
Hence we conclude
\begin{align}\nonumber
\begin{aligned}
&\Big\|\frac{\dd^l}{\dd \psi^l}\big[A-\bar{A}_s^+\big]\Big\|_{L^\infty}\leqslant C\Big(\eq\Big)^{n+1-l}Q,
\end{aligned}
\end{align}
for $l=0$, 1, 2, 3. The above estimate and (\ref{apesti2}) imply (\ref{apesti}), which completes the proof.
\qed

Similar result holds for $\xi\rightarrow-\infty$, and denote by $A^-$ a solution to the following problem:
\begin{equation}\label{Ame}
\left\{
\begin{aligned}
&-2\al A^-\frac{\dd A^-}{\dd \psi}-\e\Big(\frac{\dd^3 A^-}{\dd\psi^3}+4\al^2\frac{\dd A^-}{\dd \psi}\Big)=0,\\
&A^-(-1)=A^-(1)=0, \\
&\int_{-1}^1 A^-(\psi)\dd\psi=-Q.
\end{aligned}
\right.
\end{equation}
Now we can modify the approximate solution as
\begin{align}\nonumber
\begin{aligned}
\bar{\Phi}_s(\xi,\psi)=&\Phi_s(\xi,\psi)+\chi_1(\xi)\Big(-\frac{Q}{2}-\int_0^\psi A^+(\psi')\dd\psi'-\Phi_s^+(\psi)\Big)\\
&+\chi_1(-\xi)\Big(-\frac{Q}{2}-\int_0^\psi A^-(\psi')\dd\psi'-\Phi_s^-(\psi)\Big),
\end{aligned}
\end{align}
where $\Phi^{\pm}_s=\underset{\xi\rightarrow\pm\infty}{\lim}\Phi_s$ and $\chi_1$ is a smooth cut-off function satisfying $\chi_1|_{(-\infty,0]}=0,$ $\chi_1|_{[1,\infty)}=1$, $0\leqslant\chi_1\leqslant1$. $\bar{\Phi}_s$ satisfies also the boundary conditions:
$$\bar{\Phi}_s|_{\psi=\pm1}=\pm\frac{Q}{2},\quad \bar{\Phi}_{s\psi}|_{\psi=\pm1}=0.$$
And it has the asymptotic behaviors $\underset{\xi\rightarrow\pm\infty}{\lim}[-\bar{\Phi}_{s\psi},\bar{\Phi}_{s\xi}]=[A^{\pm},0]$. Define
\begin{align}\nonumber
\begin{aligned}
\bar{R}_s:=&\bar{\Phi}_{s\psi}\Delta_{\xi,\psi}\bar{\Phi}_{s\xi}+\frac{J_\xi}{J}\bar{\Phi}_{s\psi}\Delta_{\xi,\psi}\bar{\Phi}_{s}-\bar{\Phi}_{s\xi}\Delta_{\xi,\psi}\bar{\Phi}_{s\psi}-\frac{J_\psi}{J}\bar{\Phi}_{s\xi}\Delta_{\xi,\psi}\bar{\Phi}_{s}\\
&+\frac{\e}{J}\Delta_{\xi,\psi}\big(J\Delta_{\xi,\psi}\bar{\Phi}_s\big).
\end{aligned}
\end{align}
Direct computations give that for $\xi\geqslant 1$,
\begin{align}\nonumber
\bar{R}_s=&R_s+2\beta\Phi_{s\psi}^+\Phi_{s\psi\psi}^+-\e\Phi_{s\psi\psi\psi\psi}^+-4\beta^2\e\Phi_{s\psi\psi}^+\\\nonumber
  &-2\beta A^+A^+_{\psi}-\e A^+_{\psi\psi\psi}-4\beta^2\e A^+_{\psi}\\\nonumber
  &+\big(-\Phi_{s\psi}^+-A^+\big)\Delta_{\xi,\psi}\Phi_{s\xi}+\Big(\frac{J_\xi}{J}+2\beta\Big)\big(-\Phi_{s\psi}^+\Phi_{s\psi\psi}^++A^+A_{\psi}^+\big)\\\nonumber
&+\frac{J_\xi}{J}\big(\Phi_{s\psi}-\Phi_{s\psi}^+\big)\big(-\Phi_{s\psi\psi}^+-A_\psi^+\big)+\frac{J_\xi}{J}\big(-\Phi_{s\psi}^+-A^+\big)\big(\Phi_{s\psi\psi}-\Phi_{s\psi\psi}^+\big)\\\nonumber
&+\Phi_{s\xi}\Delta_{\xi,\psi}\big(-\Phi_{s\psi}^+-A^+\big)-\frac{J_\psi}{J}\Phi_{s\xi}\big(-\Phi_{s\psi\psi}^+-A^+_\psi\big)\\\nonumber
&+2\e \frac{J_\psi}{J}\big(-\Phi_{s\psi\psi\psi}^+-A^+_{\psi\psi}\big)+\e \Big(\frac{J_{\xi\xi}}{J}-4\beta^2+\frac{J_{\psi\psi}}{J}\Big)\big(-\Phi_{s\psi\psi}^+-A^+_{\psi}\big).
\end{align}
Recall the facts that
\begin{align}\nonumber
 &\underset{\xi\rightarrow\infty}{\lim}R_s=-2\beta\Phi^+_{s\psi}\Phi^+_{s\psi\psi}+\e\Phi^+_{s\psi\psi\psi\psi}+4\beta\e\Phi_{s\psi\psi}^+,
\end{align}
and 
\begin{align}\nonumber
-2\beta A^+A^+_{\psi}-\e A^+_{\psi\psi\psi}-4\beta^2\e A^+_{\psi}=0.
\end{align}
Combining these with that $\frac{J_\xi}{J}+2\beta$, $\frac{J_{\xi\xi}}{J}-4\beta^2$, $\frac{J_\psi}{J}$, $\frac{J_{\psi\psi}}{J}$, $\Phi_{s\xi}$ and $\Phi_{s\psi}-\Phi_{s\psi}^+$ decay to $0$ exponentially as $\xi\rightarrow\infty$, and $\big\|\frac{\dd^l}{\dd \psi^l}[-\Phi_{s\psi}^+-A^+]\big\|_\infty\leqslant  C_l\Big(\eq\Big)^{n+1-l}Q$, we conclude 
$$\|\bar{R}_s\|_{L^2\big(\mathbb{R}\times(-1,1)\big)}\leqslant C\Big(\eq\Big)^nQ^2.$$

\section{Estimates of the remainder and stability analysis}\label{stability}
We will establish the nonlinear stability analysis to estimate the deviation of the viscous flow from the approximate solution. We start with studying some properties of the approximate solution. Let $[\UB,\VB]=[-\bar{\Phi}_{s\psi},\bar{\Phi}_{s\xi}]$, and we just focus on their behaviors in the region $-1\leqslant\psi\leqslant0$. It follows from the construction that
\begin{align}\nonumber
\begin{aligned}
\UB(\xi,\psi)=&-\Phi_{e\psi}^0(\xi,\psi)-Q\Phi_{b\eta}^0\big(\xi,\qe(\psi+1)\big)\chi_2(\psi+1)+\MO\Big(\eq Q\Big)\\
             =&-Q\Big(\frac{1}{2}-u_b^0\big(\xi,\qe(\psi+1)\big)\chi_2(\psi+1)\Big)+\MO\Big(\eq Q\Big).
\end{aligned}
\end{align}
Since $\chi_2(\psi+1)=1$ for $0\leqslant\psi+1\leqslant\frac{1}{2}$ and $u_b^0(\xi,\eta)=\MO(e^{-c_0\eta})$, one has
$$u_b^0\big(\xi,\qe(\psi+1)\big)\big(\chi_2(\psi+1)-1\big)=\MO(e^{-c_0\qe(\psi+1)})=\MO(e^{-\frac{c_0}{2}\qe}),$$
thus, $$\Big|u_b^0\big(\xi,\qe(\psi+1)\big)\chi_2(\psi+1)-u_b^0\big(\xi,\qe(\psi+1)\big)\Big|\leqslant C_M\Big(\eq\Big)^M,$$
for any $M>0$. It thus follows that
\begin{align}\nonumber
\begin{aligned}
\UB(\xi,\psi)=&-Q\Big(\frac{1}{2}-u_b^0\big(\xi,\qe(\psi+1)\big)\Big)+\MO\Big(\eq Q\Big)\\
=&Q u_p\big(\xi,\qe(\psi+1)\big)+\MO\Big(\eq Q\Big).
\end{aligned}
\end{align}
For $\VB$, since $\Phi_e^0=\frac{Q\psi}{2}$, one can get
\begin{align}\nonumber
\begin{aligned}
\VB(\xi,\psi)=&\Phi_{e\xi}^0(\xi,\psi)+\eq Q\Phi_{b\xi}^0\big(\xi,\qe(\psi+1)\big)\chi_2(\psi+1)\\
              &+\eq \Phi_{e\xi}^1(\xi,\psi)+\MO\bigg(\Big(\eq\Big)^2Q\bigg)\\
              =&\eq Q\Phi_{b\xi}^0\big(\xi,\qe(\psi+1)\big)\big(\chi_2(\psi+1)-1\big)\\
              &+\eq \Big(Q\Phi_{b\xi}^0\big(\xi,\qe(\psi+1)\big)+\Phi_{e\xi}^1(\xi,\psi)\Big)+\MO\bigg(\Big(\eq\Big)^2Q\bigg)\\
              =&\eq \Big(Q\Phi_{b\xi}^0\big(\xi,\qe(\psi+1)\big)+\Phi_{e\xi}^1(\xi,\psi)\Big)+\MO\bigg(\Big(\eq\Big)^2Q\bigg).
\end{aligned}
\end{align}
Since $\Phi_e^1(\xi,-1)+Q\Phi_b^0(\xi,0)=0$, one can get
\begin{align}\nonumber
\begin{aligned}
&Q\Phi_{b\xi}^0\big(\xi,\qe(\psi+1)\big)+\Phi_{e\xi}^1(\xi,\psi)\\
=&Q\Phi_{b\xi}^0\big(\xi,\qe(\psi+1)\big)-Q\Phi_{b\xi}^0\big(\xi,0\big)+\Phi_{e\xi}^1(\xi,\psi)-\Phi_{e\xi}^1(\xi,0)+\MO\Big(\eq Q\Big)\\
=&Q\int_0^{\qe(\psi+1)}\Phi_{b\xi\eta}^0(\xi,\eta')\dd\eta'+\Phi_{e\xi}^1(\xi,\psi)-\Phi_{e\xi}^1(\xi,0)+\MO\Big(\eq Q\Big)\\
=&-Q\int_0^{\qe(\psi+1)}u_{p\xi}(\xi,\eta')\dd\eta'+\Phi_{e\xi}^1(\xi,\psi)-\Phi_{e\xi}^1(\xi,0)+\MO\bigg(\Big(\eq\Big)^2Q\bigg)\\
=&Qv_p\big(\xi,\qe(\psi+1)\big)+\Phi_{e\xi}^1(\xi,\psi)-\Phi_{e\xi}^1(\xi,0)+\MO\Big(\eq Q\Big).
\end{aligned}
\end{align}
Hence
\begin{align}\nonumber
\begin{aligned}
\VB(\xi,\psi)=&\eq Qv_p\big(\xi,\qe(\psi+1)\big)+\eq\big(\Phi_{e\xi}^1(\xi,\psi)-\Phi_{e\xi}^1(\xi,0)\big)\\
&+\MO\bigg(\Big(\eq\Big)^2Q\bigg).
\end{aligned}
\end{align}
For $\UB_\psi$ and $\VB_\psi$, it holds that $\UB_{\psi}=\MO\Big(\qe Q\Big)$ and $\VB_{\psi}=\MO\big( Q\big)$. And in the region $\psi\leqslant0$, 
\begin{align}\nonumber
\begin{aligned}
(\psi+1)\UB_{\psi}(\xi,\psi)=&Q(\psi+1)\qe  u_{p\eta}\big(\xi,\qe(\psi+1)\big)+\MO\big(Q\big)\\
=&Q\eta u_{p\eta}(\xi,\eta)+\MO\big(Q\big)\\
=&\MO\big(Q\big)
\end{aligned}
\end{align}
since $u_{p\eta}$ decays fast in $\eta$ as $\eta$ goes to $\infty$, where $\eta=\qe(\psi+1)$. In the whole region $-1\leqslant\psi\leqslant1$,
$$(\psi+1)(1-\psi)\UB_{\psi}(\xi,\psi)=\MO\big(Q\big).$$
For $\UB_\xi$, in the region $\psi\leqslant0$, one has
\begin{align}\nonumber
\begin{aligned}
(\psi+1)\UB_{\xi}(\xi,\psi)=&Q(\psi+1)\qe  u_{p\xi}\big(\xi,\qe(\psi+1)\big)+\MO\Big(\eq Q\Big)\\
=&Q\eta u_{p\xi}(\xi,\eta)+\MO\Big(\eq Q\Big)\\
=&\MO\Big(\eq Q\Big).
\end{aligned}
\end{align}
Consequently, in the the whole region $-1\leqslant\psi\leqslant1$, it holds that
$$(\psi+1)(1-\psi)\UB_{\xi}(\xi,\psi)=\MO\Big(\eq Q\Big),$$
and it follows from the divergence-free condition that,
$$(\psi+1)(1-\psi)\VB_{\psi}(\xi,\psi)=-(\psi+1)(1-\psi)\UB_{\xi}(\xi,\psi)=\MO\Big(\eq Q\Big).$$
Similarly, one has that 
\begin{align}\nonumber
\begin{aligned}
&(\psi+1)^2(1-\psi)^2\UB_{\psi}(\xi,\psi)=\MO\Big(\eq Q\Big),\\
&(\psi+1)^2(1-\psi)^2\VB_{\psi}(\xi,\psi)=-(\psi+1)^2(1-\psi)^2\UB_{\xi}(\xi,\psi)=\MO\bigg(\Big(\eq\Big)^2 Q\bigg).
\end{aligned}
\end{align}
Since $\bar{\Phi}_s|_{\psi=\pm1}=\pm\frac{Q}{2}$ and $\bar{\Phi}_{s\psi}|_{\psi=\pm1}=0$, thus $[\UB,\VB]_{\psi=\pm1}=0$. Finally, we show that $\UB$ is non-positive and $\pm\UB_{\psi}|_{\psi=\pm1}>0$. Noticing that $Qu_b^m(\xi,0)=\Phi_{e\psi}^m|_{\psi=-1}$, for $\psi+1\leqslant\eta_0\eq$, namely, $\eta\leqslant\eta_0$, one can get
\begin{align}\nonumber
\begin{aligned}
\UB=&Qu_p\big(\xi,\qe(\psi+1)\big)+\MO\Big(Q(\psi+1)\Big)\\
\leqslant&-m_0Q\qe(\psi+1)+\MO\Big(Q(\psi+1)\Big)\\
\leqslant&-\frac{m_0}{2}Q\qe(\psi+1),
\end{aligned}
\end{align}
where one has used the fact that $u_{p\eta}(\xi,\eta)\leqslant -m_0<0$ for $\eta\leqslant\eta_0$. And in the region $\eta_0\eq\leqslant\psi+1\leqslant0$, since $u_p(\xi,\eta)\leqslant-m_0\eta_0<0$ for $\eta\geqslant\eta_0$, it hold that
\begin{align}\nonumber
\begin{aligned}
\UB=&Qu_p\big(\xi,\qe(\psi+1)\big)+\MO\Big(\eq Q\Big)\\
\leqslant&-m_0\eta_0Q+\MO\Big(\eq Q\Big)\\
\leqslant&-\frac{m_0\eta_0}{2}Q.
\end{aligned}
\end{align}
For $\UB_\psi$ at $\psi=-1,$ one has
\begin{align}\nonumber
\begin{aligned}
\UB_{\psi}(\xi,-1)=&Q\qe u_{p\eta}(\xi,0)+\MO\big(Q\big)\\
\leqslant&-m_0Q\qe+\MO\big(Q\big)\\
\leqslant&-\frac{m_0}{2}Q\qe.
\end{aligned}
\end{align}
Similar estimates hold on the boundary $\psi=1$.

Now we are ready to establish the stability analysis by estimating the remainder which is the deviation of the viscous flow from the approximation solution. Thus set $\Pi=\Phi-\bar{\Phi}_s$. Then $\Pi$ solves the following problem in $(-\infty,\infty)\times(-1,1)$:
\begin{equation}\label{NPie}
\left\{
\begin{aligned}
&J\bar{\Phi}_{s\psi}\p_\xi\big(J\Delta_{\xi,\psi}\Pi\big)+J\Pi_{\psi}\p_\xi\big(J\Delta_{\xi,\psi}\bar{\Phi}_s\big)-J\bar{\Phi}_{s\xi}\p_\psi\big(J\Delta_{\xi,\psi}\Pi\big)-J\Pi_\xi\p_\psi\big(J\Delta_{\xi,\psi}\bar{\Phi}_s\big)\\
&+\e J\Delta_{\xi,\psi}\big(J\Delta_{\xi,\psi}\Pi\big)=-J^2\bar{R}_s-J^2N(\Pi), \\
&\Pi|_{\psi=1}=\Pi|_{\psi=-1}=0,\\
&\Pi_\psi|_{\psi=1}=\Pi_\psi|_{\psi=-1}=0,
\end{aligned}
\right.
\end{equation}
where $N(\Pi)$ is the nonlinear term:
$$N(\Pi)=\frac{1}{J}\Pi_\psi\p_\xi\big(J\Delta_{\xi,\psi}\Pi\big)-\frac{1}{J}\Pi_\xi\p_\psi\big(J\Delta_{\xi,\psi}\Pi\big).$$
Multiply the above equation by $\frac{1}{J^2}$ and then consider the linearized problem:
\begin{equation}\label{Pie}
\left\{
\begin{aligned}
&\frac{1}{J}\bar{\Phi}_{s\psi}\p_\xi\big(J\Delta_{\xi,\psi}\Pi\big)+\frac{1}{J}\Pi_{\psi}\p_\xi\big(J\Delta_{\xi,\psi}\bar{\Phi}_s\big)-\frac{1}{J}\bar{\Phi}_{s\xi}\p_\psi\big(J\Delta_{\xi,\psi}\Pi\big)-\frac{1}{J}\Pi_\xi\p_\psi\big(J\Delta_{\xi,\psi}\bar{\Phi}_s\big)\\
&+\e \frac{1}{J}\Delta_{\xi,\psi}\big(J\Delta_{\xi,\psi}\Pi\big)=F, \\
&\Pi|_{\psi=1}=\Pi|_{\psi=-1}=0,\\
&\Pi_\psi|_{\psi=1}=\Pi_\psi|_{\psi=-1}=0.
\end{aligned}
\right.
\end{equation}
One of the key ingredients in our stability analysis is the following estimate.
\begin{proposition}\label{Keyp}
Under the assumptions in Theorem \ref{main}, if $\Pi\in H^4\big(\mathbb{R}\times(-1,1)\big)$ solves (\ref{Pie}) with $F\in L^2\big(\mathbb{R}\times(-1,1)\big)$, then for $\eq$ suitably small, it holds that
\begin{align}\label{KeyPie}
\begin{aligned}
\big\|\nabla_{\xi,\psi}\Pi\big\|+\eq\Big\| \nabla^2_{\xi,\psi}\Pi\Big\|\leqslant \frac{C}{Q}\big\|F\big\|.
\end{aligned}
\end{align}
\end{proposition}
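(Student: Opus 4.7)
The plan is to adapt the quotient method of \cite{GZ20} to the fourth-order linearization by setting $\hh := \Pi/\UB$, with $\UB=-\bar\Phi_{s\psi}$ the first component of the approximate velocity. By the boundary conditions $\Pi|_{\psi=\pm1}=\Pi_\psi|_{\psi=\pm1}=0$ together with $\UB|_{\psi=\pm1}=0$ and $\pm\UB_\psi|_{\psi=\pm1}\geq\tfrac{m_0}{2}Q\qe>0$ (both verified at the beginning of this section), $\hh$ extends smoothly up to the boundary with $\hh|_{\psi=\pm1}=0$. Dividing (\ref{Pie}) by $\UB$ and reorganizing via the divergence-free identity $\UB_\xi=\VB_\psi$, one obtains a fourth-order linear equation for $\hh$ whose top-order viscous term reads $\e\,\Delta_{\xi,\psi}^2(\UB\hh)$, in direct parallel with the equation (\ref{lprahh}) treated in Section~\ref{high-order}.

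The key computational step is to test this equation against $\hh_\psi$ (not $\hh$) and to integrate by parts exactly as in the proof of Lemma \ref{brandtl linear}. After collecting terms, the convective contribution assembles into $\int \mathcal Q\,\hh_\psi^2+\int\mathcal R\,\hh^2$ with
\[
\mathcal Q = -\tfrac{3}{2}\UB\UB_\xi-\tfrac{3}{2}\VB\UB_\psi+2\UB_{\psi\psi}-\tfrac{J_\xi}{J}\UB^2,
\]
while $\mathcal R$ is a total $\psi$-derivative of terms that cancel modulo $\MO(\eq)Q^2$ because $\p_\psi^2$ of the leading Prandtl equation vanishes identically. Using the Prandtl equation $\UB\UB_\xi+\VB\UB_\psi+\tfrac{1}{2}\tfrac{J_\xi}{J}\UB^2-\UB_{\psi\psi}=\MO(\eq)Q^2$ satisfied by $[\UB,\VB]$ (up to higher-order corrections built in Section~\ref{high-order}), $\mathcal Q$ reduces to $-\tfrac{1}{4}\tfrac{J_\xi}{J}\UB^2+\tfrac{1}{2}\UB_{\psi\psi}+\MO(\eq)Q^2$, and both surviving terms are nonnegative: the first by the favorable pressure condition (\ref{key2}) and the second by the concavity $\UB_{\psi\psi}\geq 0$ inherited from Proposition \ref{PRAE}. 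The viscous part supplies the nonnegative bulk $\int -\e\UB\hh_{\psi\psi}^2\geq 0$ and a nonnegative boundary contribution $\pm\e\UB_\psi\hh_\psi^2|_{\psi=\pm1}\geq 0$.

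The main obstacle is the bookkeeping of the error terms arising from (i) the non-shear Euler background, which produces cross-terms with prefactors $\tfrac{J_\psi}{J}$ and $\tfrac{J_{\xi\xi}}{J}-4\beta^2$, and (ii) the $\e$-singular normal derivatives of the boundary-layer profile in $\bar\Phi_s$ together with the difference between $\bar\Phi_s$ and $\Phi_s^\pm$ at $\xi\to\pm\infty$. For (i), the exponential decay estimates (\ref{asybJ1})-(\ref{asybJ2}) give a factor of $e^{-\mu|\xi|/4}$, which is absorbable into $\int\mathcal Q\hh_\psi^2$; for (ii), the pointwise bounds on $\UB,\VB$ and their derivatives listed at the start of this section (notably $(\psi+1)^2(1-\psi)^2\UB_\xi=\MO(\eq)Q$) combined with the weighted Hardy inequality $\|\hh/((\psi+1)(1-\psi))\|\lesssim \|\hh_\psi\|$ allow the singular factors to be absorbed into the coercive term provided $\eq$ is small. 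Collecting everything yields
\[
Q^2\|\hh_\psi\|^2+\int -\e\UB\,\hh_{\psi\psi}^2\leq \frac{C}{Q^2}\|F\|^2.
\]

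Finally, converting back via $\Pi=\UB\hh$ and using the pointwise bounds $|\UB|\lesssim Q$ and $|(\psi+1)(1-\psi)\UB_\psi|\lesssim Q$ with Hardy's inequality gives $\|\nabla_{\xi,\psi}\Pi\|\lesssim \frac{1}{Q}\|F\|$. The higher-order estimate $\eq\|\nabla^2_{\xi,\psi}\Pi\|\lesssim\frac{1}{Q}\|F\|$ is obtained by differentiating (\ref{Pie}) in $\xi$ and repeating the whole argument for $\hh_\xi$, exploiting $\int -\e\UB\hh_{\xi\psi\psi}^2$ as the coercive bulk term, together with the relation $\UB\hh_{\psi\psi}=\Pi_{\psi\psi}-2\UB_\psi\hh_\psi-\UB_{\psi\psi}\hh$ and the bound $|\UB|\gtrsim m_0 Q\qe(\psi+1)$ inside the layer near $\psi=-1$. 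The viability of the whole scheme rests on the precise cancellations visible in the simplification of $\mathcal Q$, which require invoking the Prandtl equation and the favorable pressure identity simultaneously, and which is exactly what motivates the choice of $\hh=\Pi/\UB$ over any other normalization.
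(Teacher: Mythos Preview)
The central gap is the choice of multiplier. You propose to test against $\hh_\psi$, following the parabolic argument of Lemma~\ref{brandtl linear}, but that argument works only because the linearized Prandtl equation is third order in $\eta$ and first order in $\xi$. The Navier--Stokes linearization~(\ref{Pie}) is genuinely fourth-order elliptic in $(\xi,\psi)$, and testing $\e\Delta_{\xi,\psi}^2\Pi$ against $\hh_\psi$ does \emph{not} produce the coercive bulk $\int -\e\UB\,\hh_{\psi\psi}^2$ you claim: after integrating by parts in $\psi$, the leading coefficient of $\hh_{\psi\psi}^2$ is $-\tfrac{5}{2}\e\UB_\psi$, and $\UB_\psi$ changes sign across the strip (negative near $\psi=-1$, positive near $\psi=1$), so no sign is available. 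A multiplier involving only $\p_\psi$ also yields no control on $\hh_\xi$ and gives nothing useful from the purely tangential piece $\e\Pi_{\xi\xi\xi\xi}$; hence even the first-order bound on $\|\Pi_\xi\|$ is out of reach, and your proposed recovery of $\eq\|\nabla^2\Pi\|$ by differentiating in $\xi$ and repeating cannot close. Incidentally, the ``Prandtl equation for $[\UB,\VB]$'' you invoke drops the factor $\e$ in front of $\UB_{\psi\psi}$ and is dimensionally inconsistent in the $(\xi,\psi)$ variables, which is another symptom of transplanting the boundary-layer argument without rescaling.

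The paper instead tests~(\ref{Pie}) against $-G=-\Pi/\UB$ itself. This is the correct elliptic analogue: the bi-Laplacian then produces all three coercive pieces $-\e\UB\big(G_{\xi\xi}^2+2G_{\xi\psi}^2+G_{\psi\psi}^2\big)$ simultaneously, together with the favourable boundary term $\pm\e\UB_\psi G_\psi^2|_{\psi=\pm1}\geq0$. The convective terms assemble into strictly positive coefficients of \emph{both} $G_\xi^2$ and $G_\psi^2$ via the favourable-pressure condition~(\ref{key2}) and the concavity $u_{p\eta\eta}\geq0$, while the coefficient of $G^2$ vanishes to leading order because it is exactly $\p_\eta^2$ of the Prandtl equation. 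A single energy identity therefore gives $Q^2\|\nabla G\|^2+\e\int(-\UB)|\nabla^2 G|^2\lesssim Q^{-2}\|F\|^2$, from which both $\|\nabla\Pi\|$ and $\eq\|\nabla^2\Pi\|$ follow by Hardy; no $\xi$-differentiation is needed.
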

\begin{proof}
Set $G:=\Pi/\UB$. Since $\UB$ is negative for $-1<\psi<1$ and $\pm \UB_\psi|_{\psi=\pm1}>0$, $G$ is well-defined. And $G|_{\psi=\pm1}=0$ follows from $\Pi|_{\psi=\pm1}=\Pi_\psi|_{\psi=\pm1}=0$. Take the $L^2\big(\mathbb{R}\times(-1,1)\big)$ inner product of the first equation in (\ref{Pie}) with $-G$ to get
\begin{align}\nonumber
\begin{aligned}
&\Bz\frac{1}{J}\bar{\Phi}_{s\psi}\p_\xi\big(J\Delta_{\xi,\psi}\Pi\big),-G\By+\Bz\frac{1}{J}\Pi_{\psi}\p_\xi\big(J\Delta_{\xi,\psi}\bar{\Phi}_s\big),-G\By-\Bz\frac{1}{J}\bar{\Phi}_{s\xi}\p_\psi\big(J\Delta_{\xi,\psi}\Pi\big),-G\By\\
&-\Bz\frac{1}{J}\Pi_\xi\p_\psi\big(J\Delta_{\xi,\psi}\bar{\Phi}_s\big),-G\By+\e\Bz \frac{1}{J}\Delta_{\xi,\psi}\big(J\Delta_{\xi,\psi}\Pi\big),-G\By=\bz F,-G\by.
\end{aligned}
\end{align}
By integrating by parts, one has 
\begin{align}\nonumber
\begin{aligned}
I_1:=&\Bz\frac{1}{J}\bar{\Phi}_{s\psi}\p_\xi\big(J\Delta_{\xi,\psi}\Pi\big),-G\By=\Bz\frac{1}{J}\UB\p_\xi\big(J\Delta_{\xi,\psi}\Pi\big),\frac{\Pi}{\UB}\By\\
=&-\Bz\frac{J_\xi}{J},\Pi_\xi^2\By-\Bz\frac{J_\xi}{J},\Pi_\psi^2\By-\Bz\frac{1}{2}\Delta_{\xi,\psi}\Big[\frac{J_\xi}{J}\Big],\Pi^2\By.
\end{aligned}
\end{align}
Since $\frac{J_\xi}{J}$ is a harmonic function, it holds that 
\begin{align}\nonumber
I_1=&-\Bz\frac{J_\xi}{J},\Pi_\xi^2\By-\Bz\frac{J_\xi}{J},\Pi_\psi^2\By\\\nonumber
=&-\Bz\frac{J_\xi}{J}, \UB^2G_\xi^2+\UB_\xi^2G^2+2\UB\UB_\xi  GG_\xi\By\\\nonumber
&-\Bz\frac{J_\xi}{J},\UB^2G_\psi^2+\UB_\psi^2G^2+2\UB\UB_\psi  GG_\psi\By\\\nonumber
=&-\Bz\frac{J_\xi}{J}\UB^2,G_\xi^2\By-\Bz\frac{J_\xi}{J}\UB^2,G_\psi^2\By+\Bz\frac{J_\xi}{J}\UB\UB_{\psi\psi}, G^2\By\\\nonumber
&+\MO\Big(\Big\|\frac{J_\xi}{J}\Big\|_\infty\big\|(\psi+1)(1-\psi)\UB_\xi\big\|_\infty^2\Big\|\frac{G}{(\psi+1)(1-\psi)}\Big\|^2\Big)\\\nonumber
&+\MO\Big(\Big\|\frac{J_\xi}{J}\UB\Big\|_\infty\big\|(\psi+1)(1-\psi))\UB_\xi\big\|_\infty\big\|G_\xi\big\|\Big\|\frac{G}{(\psi+1)(1-\psi)}\Big\|\Big)\\\nonumber
&+\MO\Big(\Big\|\Big[\frac{J_\xi}{J}\Big]_\psi\UB\Big\|_\infty\big\|(\psi+1)^2(1-\psi)^2\UB_\psi\big\|_\infty\Big\|\frac{G}{(\psi+1)(1-\psi)}\Big\|^2\Big).\\\nonumber
\end{align}
It follows from the Hardy inequality that $\Big\|\frac{G}{(\psi+1)(1-\psi)}\Big\|\leqslant C\big\|G_\psi\big\|$, therefore
\begin{align}\nonumber
I_1=&-\Bz\frac{J_\xi}{J}\UB^2,G_\xi^2\By-\Bz\frac{J_\xi}{J}\UB^2,G_\psi^2\By+\Bz\frac{J_\xi}{J}\UB\UB_{\psi\psi}, G^2\By\\\nonumber
&+\MO\Big(\frac{\e}{Q} Q^2\big\|G_\psi\big\|^2+\eq Q^2\big\|G_\xi\big\|\big\|G_\psi\big\|+\eq Q^2\big\|G_\psi\big\|^2\Big)\\\label{I1}
=&-\Bz\frac{J_\xi}{J}\UB^2,G_\xi^2\By-\Bz\frac{J_\xi}{J}\UB^2,G_\psi^2\By+\Bz\frac{J_\xi}{J}\UB\UB_{\psi\psi}, G^2\By+\MO\Big(\eq Q^2\big\|\nabla G\big\|^2\Big).
\end{align}
Next,
\begin{align}\nonumber
\begin{aligned}
I_2:=&\Bz\frac{1}{J}\Pi_{\psi}\p_\xi\big(J\Delta_{\xi,\psi}\bar{\Phi}_s\big),-G\By\\
=&\Bz \Pi_{\psi}\VB_{\psi\psi},-G\By+\Bz \frac{J_\xi}{J}\Pi_{\psi}\UB_\psi,G\By+\Bz \Pi_{\psi}\VB_{\xi\xi},-G\By+\Bz \frac{J_\xi}{J}\Pi_{\psi}\VB_\xi,-G\By\\
=&\Bz \UB\VB_{\psi\psi},-G G_\psi\By+\Bz \UB_\psi\VB_{\psi\psi},-G^2\By+\Bz \frac{J_\xi}{J}\UB\UB_\psi,GG_\psi\By+\Bz \frac{J_\xi}{J}\UB^2_\psi,G^2\By\\
&+\MO\Big(\big\|\VB_{\xi\xi}\big\|_\infty\big\|\Pi_\psi\big\|\big\|G\big\|+\Big\|\frac{J_\xi}{J}\VB_{\xi}\Big\|_\infty\big\|\Pi_\psi\big\|\big\|G\big\|\Big)\\
=&\Bz \frac{1}{2}\UB\VB_{\psi\psi\psi}-\frac{1}{2}\UB_\psi\VB_{\psi\psi},G^2\By+\Bz \frac{1}{2}\frac{J_\xi}{J}\Big(\UB^2_\psi-\UB\UB_{\psi\psi}\Big),G^2\By\\
&-\Bz\frac{1}{2}\Big[\frac{J_\xi}{J}\Big]_\psi \UB\UB_\psi, G^2\By+\MO\Big(\big\|\VB_{\xi\xi}\big\|_\infty\big\|\Pi_\psi\big\|\big\|G\big\|+\Big\|\frac{J_\xi}{J}\VB_{\xi}\Big\|_\infty\big\|\Pi_\psi\big\|\big\|G\big\|\Big).
\end{aligned}
\end{align}
Noticing that
\begin{align}\nonumber
\begin{aligned}
\big\|\Pi_\psi\|\leqslant& C\big\|\UB\big\|_\infty\big\|G_\psi\big\|+C\big\|(1-\psi)(1+\psi)\UB_\psi\big\|_\infty\Big\|\frac{G}{(\psi+1)(1-\psi)}\Big\|\\
                       \leqslant &CQ\big\|G_\psi\big\|,
\end{aligned}
\end{align}
and $\big\|\VB_\xi\big\|_\infty+\big\|\VB_{\xi\xi}\big\|_\infty\leqslant C\eq Q,$ one can obtain
\begin{align}\nonumber
I_2=&\Bz \frac{1}{2}\UB\VB_{\psi\psi\psi}-\frac{1}{2}\UB_\psi\VB_{\psi\psi},G^2\By+\Bz \frac{1}{2}\frac{J_\xi}{J}\Big(\UB^2_\psi-\UB\UB_{\psi\psi}\Big),G^2\By\\\nonumber
&+\MO\Big(\Big\|\Big[\frac{J_\xi}{J}\Big]_\psi \UB\Big\|_\infty\big\|(\psi+1)^2(1-\psi)^2\UB_\psi\big\|_\infty\Big\|\frac{G}{(\psi+1)(1-\psi)}\Big\|^2\Big)\\\nonumber
&+\MO\Big(\eq Q^2\big\|G_\psi\big\|^2\Big)\\\label{I2}
=&\Bz \frac{1}{2}\UB\VB_{\psi\psi\psi}-\frac{1}{2}\UB_\psi\VB_{\psi\psi},G^2\By+\Bz \frac{1}{2}\frac{J_\xi}{J}\Big(\UB^2_\psi-\UB\UB_{\psi\psi}\Big),G^2\By+\MO\Big(\eq Q^2\big\|G_\psi\big\|^2\Big).
\end{align}
Next, we turn to estimate 
\begin{align}\nonumber
\begin{aligned}
I_3:=&-\Bz\frac{1}{J}\bar{\Phi}_{s\xi}\p_\psi\big(J\Delta_{\xi,\psi}\Pi\big),-G\By\\
    =&\Bz \VB \Delta_{\xi,\psi}\Pi_\psi,G\By+\Bz \frac{J_\psi}{J}\VB\Delta_{\xi,\psi}\Pi,G\By\\
    =&:I_{3,1}+I_{3,2}.
\end{aligned}
\end{align}
It will be shown that $I_{3,2}$ is bounded by $\eq Q^2\|\nabla G\|^2$. Indeed,
\begin{align}\nonumber
\begin{aligned}
I_{3,2}:=&\Bz \frac{J_\psi}{J}\VB\Delta_{\xi,\psi}\Pi,G\By\\
=&-\Bz \frac{J_\psi}{J}\VB\Pi_\xi,G_\xi\By-\Bz \Big[\frac{J_\psi}{J}\VB\Big]_\xi\Pi_\xi,G\By\\
&-\Bz \frac{J_\psi}{J}\VB\Pi_\psi,G_\psi\By-\Bz \Big[\frac{J_\psi}{J}\VB\Big]_\psi\Pi_\psi,G\By\\
=&\MO\Big(\Big\|\frac{J_\psi}{J}\VB\Big\|_\infty\big\|\Pi_\xi\big\|\big\|G_\xi\big\|+\Big\|\Big[\frac{J_\psi}{J}\VB\Big]_\xi\Big\|_\infty\big\|\Pi_\xi\big\|\big\|G\big\|\Big)\\
&+\MO\Big(\Big\|\frac{J_\psi}{J}\VB\Big\|_\infty\big\|\Pi_\psi\big\|\big\|G_\psi\big\|+\Big\|(\psi+1)(1-\psi)\Big[\frac{J_\psi}{J}\VB\Big]_\psi\Big\|_\infty\big\|\Pi_\xi\big\|\Big\|\frac{G}{(\psi+1)(1-\psi)}\Big\|\Big)\\
=&\eq Q^2\|\nabla G\|^2,
\end{aligned}
\end{align}
where $\VB=\MO\big(\eq Q\big)$ and $(\psi+1)(1-\psi)\VB_{\psi}=\MO\big(\eq Q\big)$ have been used. $I_{3,1}$ can be estimated as
\begin{align}\nonumber
I_{3,1}=&\Bz \VB\Pi_{\xi\xi\psi},G\By+\Bz \VB\Pi_{\psi\psi\psi},G\By\\\nonumber
=&-\Bz \VB\big(\UB G_{\xi\psi}+\UB_\psi G_\xi+\UB_\xi G_\psi+\UB_{\xi\psi}G\big),G_\xi\By\\\nonumber
&+\Bz \VB_\xi\Pi_\psi,G_\xi\By+\Bz \VB_{\xi\xi}\Pi_\psi,G\By\\\nonumber
&-\Bz \VB\big(\UB G_{\psi\psi}+2\UB_\psi G_\psi+\UB_{\psi\psi} G\big),G_\psi\By\\\nonumber
&+\Bz \VB_\psi\Pi_{\psi},G_\psi\By+\Bz \VB_{\psi\psi}\Pi_{\psi},G\By\\\nonumber
=&\Bz\frac{1}{2}\big(\VB\UB\big)_\psi, G^2_\xi\By-\Bz\VB\UB_\psi, G^2_\xi\By+\MO\Big(\big\|\VB\UB_\xi\big\|_\infty\big\|G_\xi\big\|\big\|G_\psi\big\|\Big)\\\nonumber
&+\MO\Big(\big\|(\psi+1)(1-\psi)\VB\UB_{\xi\psi}\big\|_\infty\big\|G_\xi\big\|\Big\|\frac{G}{(\psi+1)(1-\psi)}\Big\|\Big)\\\nonumber
&+\MO\Big(\big\|\VB_\xi\big\|_\infty\big\|\Pi_\psi\big\|\big\|G_\xi\big\|+\big\|\VB_{\xi\xi}\big\|_\infty\big\|\Pi_\psi\big\|\big\|G\big\|\Big)\\\nonumber
&+\Bz\frac{1}{2}\big(\VB\UB\big)_\psi, G^2_\psi\By-2\Bz\VB\UB_\psi, G^2_\psi\By+\Bz\frac{1}{2}\big(\VB\UB_{\psi\psi}\big)_\psi, G^2\By\\\nonumber
&+\Bz \VB_\psi\UB,G^2_\psi\By+\Bz \VB_\psi\UB_{\psi},GG_\psi\By+\Bz \VB_{\psi\psi}\UB,GG_\psi\By+\Bz \VB_{\psi\psi}\UB_{\psi},G^2\By\\\nonumber
=&\Bz\frac{1}{2}\UB\VB_\psi-\frac{1}{2}\VB\UB_\psi,G_\xi^2\By+\Bz\frac{3}{2}\UB\VB_\psi-\frac{3}{2}\VB\UB_\psi,G_\psi^2\By\\\nonumber
&+\Bz\frac{1}{2}\VB_\psi\UB_{\psi\psi}+\frac{1}{2}\VB\UB_{\psi\psi\psi}, G^2\By-\Bz\frac{1}{2} \big(\VB_\psi\UB_{\psi}\big)_\psi,G^2\By\\\nonumber
&-\Bz \frac{1}{2}\big(\VB_{\psi\psi}\UB\big)_\psi,G^2\By+\Bz \VB_{\psi\psi}\UB_{\psi},G^2\By+\MO\Big(\eq Q^2\|\nabla G\|^2\Big)\\\nonumber
=&\Bz\frac{1}{2}\UB\VB_\psi-\frac{1}{2}\VB\UB_\psi,G_\xi^2\By+\Bz\frac{3}{2}\UB\VB_\psi-\frac{3}{2}\VB\UB_\psi,G_\psi^2\By\\\nonumber
&+\Bz\frac{1}{2}\VB\UB_{\psi\psi\psi}-\frac{1}{2}\UB\VB_{\psi\psi\psi},G^2\By+\MO\Big(\eq Q^2\|\nabla G\|^2\Big).
\end{align}
Thus, it holds that
\begin{align}\label{I3}
\begin{aligned}
I_3=&\Bz\frac{1}{2}\UB\VB_\psi-\frac{1}{2}\VB\UB_\psi,G_\xi^2\By+\Bz\frac{3}{2}\UB\VB_\psi-\frac{3}{2}\VB\UB_\psi,G_\psi^2\By\\
&+\Bz\frac{1}{2}\VB\UB_{\psi\psi\psi}-\frac{1}{2}\UB\VB_{\psi\psi\psi},G^2\By+\MO\Big(\eq Q^2\|\nabla G\|^2\Big).
\end{aligned}
\end{align}
Next, one can estimate $I_4$ as 
\begin{align}\nonumber
I_4:=&-\Bz\frac{1}{J}\Pi_\xi\p_\psi\big(J\Delta_{\xi,\psi}\bar{\Phi}_s\big),-G\By\\\nonumber
=&-\Bz\Pi_\xi\big(\UB_{\psi\psi}+\UB_{\xi\xi}\big),G\By+\Bz\frac{J_\psi}{J}\big(-\UB_\psi+\VB_\xi\big)\Pi_\xi,G\By\\\nonumber
=&-\Bz\UB\UB_{\psi\psi},GG_\xi\By-\Bz\UB_\xi\UB_{\psi\psi},G^2\By\\\nonumber
&+\MO\Big(\big\|(\psi+1)(1-\psi)\UB_{\xi\xi}\big\|_\infty\big\|\Pi_\xi\big\|\Big\|\frac{G}{(\psi+1)(1-\psi)}\Big\|\Big)\\\nonumber
&-\Bz\frac{J_\psi}{J}\UB\UB_\psi,GG_\xi\By-\Bz\frac{J_\psi}{J}\UB_\xi\UB_\psi,G^2\By+\MO\Big(\Big\|\frac{J_\psi}{J}\VB_\xi\Big\|_\infty\big\|\Pi_\xi\big\|\big\|G\big\|\Big)\\\nonumber
=&\Bz\frac{1}{2}\big(\UB\UB_{\psi\psi}\big)_\xi, G^2\By-\Bz\UB_\xi\UB_{\psi\psi},G^2\By+\Bz\frac{1}{2}\Big[\frac{J_\psi}{J}\UB\UB_\psi\Big]_\xi,G^2\By\\\nonumber
&+\MO\Big(\Big\|(\psi+1)^2(1-\psi)^2\frac{J_\psi}{J}\UB_\xi\UB_\psi\Big\|_\infty\Big\|\frac{G}{(\psi+1)(1-\psi)}\Big\|^2\Big)\\\nonumber
&+\MO\Big(\eq Q^2\big\|\nabla G\big\|^2\Big)\\\nonumber
=&\Bz\frac{1}{2}\UB\UB_{\xi\psi\psi}-\frac{1}{2}\UB_\xi\UB_{\psi\psi}, G^2\By\\\nonumber
&+\MO\Big(\Big\|(\psi+1)^2(1-\psi)^2\Big[\frac{J_\psi}{J}\UB\UB_\psi\Big]_\xi\Big\|_\infty\Big\|\frac{G}{(\psi+1)(1-\psi)}\Big\|^2\Big)\\\nonumber
&+\MO\Big(\eq Q^2\big\|\nabla G\big\|^2\Big)\\\label{I4}
=&\Bz\frac{1}{2}\UB\UB_{\xi\psi\psi}-\frac{1}{2}\UB_\xi\UB_{\psi\psi}, G^2\By+\MO\Big(\eq Q^2\big\|\nabla G\big\|^2\Big).
\end{align}
Finally, one needs to estimate the term involving the bi-Laplacian term. 
\begin{align}\nonumber
I_5:=&-\e\Bz \frac{1}{J}\Delta_{\xi,\psi}\big(J\Delta_{\xi,\psi}\Pi\big),G\By\\\nonumber
=&-\e\Bz\Pi_{\xi\xi\xi\xi},G\By-\e\Bz2\Pi_{\xi\xi\psi\psi},G\By-\e\Bz\Pi_{\psi\psi\psi\psi},G\By\\\nonumber
&-\e\Bz \frac{2J_\xi}{J} \Delta_{\xi,\psi}\Pi_\xi,G\By-\e \Bz\frac{2J_\psi}{J} \Delta_{\xi,\psi}\Pi_\psi, G\By-\e\Bz\frac{\Delta_{\xi,\psi}J}{J} \Delta_{\xi,\psi}\Pi, G\By\\\nonumber
=&:I_{5,1}+I_{5,2}+I_{5,3}+I_{5,4}+I_{5,5}+I_{5,6}.
\end{align}
It will be shown that $I_{5,4}+I_{5,5}+I_{5,6}$ by $\MO\big(\eq Q^2\|\nabla G\|^2\big)$. Indeed,
\begin{align}\nonumber
I_{5,4}=&-\e\Bz \frac{2J_\xi}{J}\big(\Pi_{\xi\xi\xi}+\Pi_{\xi\psi\psi}\big),G\By\\\nonumber
=&\e\Bz \frac{2J_\xi}{J}\big(\UB G_{\xi\xi}+2\UB_\xi G_\xi+\UB_{\xi\xi} G\big),G_\xi\By\\\nonumber
&-\e\Bz \Big[\frac{2J_\xi}{J}\Big]_\xi\Pi_{\xi},G_\xi\By-\e\Bz \Big[\frac{2J_\xi}{J}\Big]_{\xi\xi}\Pi_{\xi},G\By\\\nonumber
&+\e\Bz \frac{2J_\xi}{J}\big(\UB G_{\xi\psi}+\UB_\xi G_\psi+\UB_\psi G_\xi+\UB_{\psi\xi} G\big),G_\psi\By\\\nonumber
&-\e\Bz \Big[\frac{2J_\xi}{J}\Big]_\psi\Pi_{\psi},G_\xi\By-\e\Bz \Big[\frac{2J_\xi}{J}\Big]_{\psi\xi}\Pi_{\psi},G\By\\\nonumber
=&\e\Bz \frac{2J_\xi}{J}\UB G_{\xi\xi},G_\xi\By+\e\Bz \frac{2J_\xi}{J}\UB G_{\xi\psi},G_\psi\By\\\nonumber
&+\MO\Big(\eq Q\big\|\nabla \Pi\big\|\big\|\nabla G\big\|+\eq Q^2\big\|\nabla G\big\|^2\Big)\\\nonumber
=&-\e\Bz \Big[\frac{J_\xi}{J}\UB\Big]_\xi,G^2_\xi\By-\e\Bz\Big[ \frac{J_\xi}{J}\UB\Big]_\xi ,G^2_\psi\By+\MO\Big(\eq Q^2\big\|\nabla G\big\|^2\Big)\\\nonumber
=&\MO\Big(\eq Q^2\big\|\nabla G\big\|^2\Big),
\end{align}
similarly,
\begin{align}\nonumber
I_{5,5}=&-\e\Bz \frac{2J_\psi}{J}\big(\Pi_{\xi\xi\psi}+\Pi_{\psi\psi\psi}\big),G\By\\\nonumber
=&\e\Bz \frac{2J_\psi}{J}\big(\UB G_{\xi\psi}+\UB_\xi G_\psi+\UB_\psi G_\xi+\UB_{\xi\psi} G\big),G_\xi\By\\\nonumber
&-\e\Bz \Big[\frac{2J_\psi}{J}\Big]_\xi\Pi_{\xi},G_\psi\By-\e\Bz \Big[\frac{2J_\psi}{J}\Big]_{\xi\psi}\Pi_{\xi},G\By\\\nonumber
&+\e\Bz \frac{2J_\xi}{J}\big(\UB G_{\psi\psi}+2\UB_\psi G_\psi+\UB_{\psi\psi} G\big),G_\psi\By\\\nonumber
&-\e\Bz \Big[\frac{2J_\psi}{J}\Big]_\psi\Pi_{\psi},G_\psi\By-\e\Bz \Big[\frac{2J_\psi}{J}\Big]_{\psi\psi}\Pi_{\psi},G\By\\\nonumber
=&\e\Bz \frac{2J_\psi}{J}\UB G_{\xi\psi},G_\xi\By+\e\Bz \frac{2J_\psi}{J}\UB G_{\psi\psi},G_\psi\By\\\nonumber
&+\MO\Big(\eq Q\big\|\nabla \Pi\big\|\big\|\nabla G\big\|+\eq Q^2\big\|\nabla G\big\|^2\Big)\\\nonumber
=&-\e\Bz \Big[\frac{J_\psi}{J}\UB\Big]_\psi,G^2_\xi\By-\e\Bz\Big[ \frac{J_\psi}{J}\UB\Big]_\psi ,G^2_\psi\By+\MO\Big(\eq Q^2\big\|\nabla G\big\|^2\Big)\\\nonumber
=&\MO\Big(\eq Q^2\big\|\nabla G\big\|^2\Big),
\end{align}
and 
\begin{align}\nonumber
I_{5,6}=&-\e\Bz\frac{\Delta_{\xi,\psi}J}{J} \Pi_{\xi\xi}, G\By-\e\Bz\frac{\Delta_{\xi,\psi}J}{J} \Pi_{\psi\psi}, G\By\\\nonumber
=&\e\Bz\frac{\Delta_{\xi,\psi}J}{J} \Pi_{\xi}, G_\xi\By+\e\Bz\Big[\frac{\Delta_{\xi,\psi}J}{J}\Big]_\xi \Pi_{\xi}, G\By\\\nonumber
&+\e\Bz\frac{\Delta_{\xi,\psi}J}{J} \Pi_{\psi}, G_\psi\By+\e\Bz\Big[\frac{\Delta_{\xi,\psi}J}{J}\Big]_\psi \Pi_{\psi}, G\By\\\nonumber
=&\MO\Big(\frac{\e}{Q} Q^2\big\|\nabla G\big\|^2\Big).
\end{align}
To estimate $I_{5,k}$, $k=1,2,3$, one can get
\begin{align}\nonumber
I_{5,1}=&-\e\Bz\Pi_{\xi\xi\xi\xi},G\By\\\nonumber
=&-\e\Bz\UB,G^2_{\xi\xi}\By+\e\Bz2\UB_{\xi\xi},G_\xi^2\By+\e\Bz\UB_{\xi\xi\xi},GG_{\xi}\By\\\nonumber
=&-\e\Bz\UB,G^2_{\xi\xi}\By+\MO\Big(\frac{\e}{Q} Q^2\big\|\nabla G\big\|^2\Big),
\end{align}
with $-\e\big\z\UB,G^2_{\xi\xi}\big\y$ non-nogative,
\begin{align}\nonumber
I_{5,2}=&-2\e\Bz\Pi_{\xi\xi\psi\psi},G\By\\\nonumber
=&-2\e\Bz\UB,G^2_{\xi\psi}\By+\e\Bz\UB_{\psi\psi},G^2_{\xi}\By+\e\Bz\UB_{\xi\xi},G^2_{\psi}\By\\\nonumber
&+2\e\Bz\UB_{\xi\psi},G_\xi G_{\psi}\By+2\e\Bz\UB_{\xi\xi\psi},GG_{\psi}\By\\\nonumber
=&-2\e\Bz\UB,G^2_{\xi\psi}\By+\e\Bz\UB_{\psi\psi},G^2_{\xi}\By+\MO\Big(\eq Q^2\big\|\nabla G\big\|^2\Big).
\end{align}
and 
\begin{align}\nonumber
I_{5,3}=&-\e\Bz\Pi_{\psi\psi\psi\psi},G\By\\\nonumber
=&-\e\Bz\UB,G^2_{\psi\psi}\By+\e\Bz \UB_{\psi},G^2_\psi \By\Big|_{\psi=1}-\e\Bz \UB_{\psi},G^2_\psi \By\Big|_{\psi=-1}\\\nonumber
&+\e\Bz2\UB_{\psi\psi},G^2_{\psi}\By-\e\Bz\frac{1}{2}\UB_{\psi\psi\psi\psi},G^2\By\\\nonumber
\geqslant &-\e\Bz\UB,G^2_{\psi\psi}\By+\e\Bz2\UB_{\psi\psi},G^2_{\psi}\By-\e\Bz\frac{1}{2}\UB_{\psi\psi\psi\psi},G^2\By,
\end{align}
where one has used the fact that $\pm\UB_{\psi}|_{\psi=\pm1}>0$. Thus $I_5$ admits the following estimate:
\begin{align}\label{I5}
\begin{aligned}
I_5\geqslant &-\e\Bz\UB,G^2_{\xi\xi}+2G^2_{\xi\psi}+G^2_{\psi\psi}\By+\e\Bz\UB_{\psi\psi},G^2_{\xi}\By+\e\Bz2\UB_{\psi\psi},G^2_{\psi}\By\\
&-\e\Bz\frac{1}{2}\UB_{\psi\psi\psi\psi},G^2\By-C\eq Q^2\big\|\nabla G\big\|^2.
\end{aligned}
\end{align}
It follows from (\ref{I1}) - (\ref{I5}) that
\begin{align}\label{Xin1}
\begin{aligned}
&I_1+I_2+I_3+I_4+I_5\\
\geqslant &-\e\Bz\UB,G^2_{\xi\xi}+2G^2_{\xi\psi}+G^2_{\psi\psi}\By\\
&+\Bz\frac{1}{2}\UB\VB_\psi-\frac{1}{2}\VB\UB_\psi-\frac{J_\xi}{J}\UB^2+\e\UB_{\psi\psi}, G_\xi^2\By\\
&+\Bz\frac{3}{2}\UB\VB_\psi-\frac{3}{2}\VB\UB_\psi-\frac{J_\xi}{J}\UB^2+2\e\UB_{\psi\psi}, G_\psi^2\By\\
&+\frac{1}{2}\Bz \UB\UB_{\xi\psi\psi}+\VB\UB_{\psi\psi\psi}-\UB_\psi\VB_{\psi\psi}-\UB_\xi\UB_{\psi\psi}+\frac{J_\xi}{J}\big(\UB_\psi^2+\UB\UB_{\psi\psi}\big)-\e\UB_{\psi\psi\psi\psi},G^2\By\\
&-C\eq Q^2\big\|\nabla G\big\|^2.
\end{aligned}
\end{align}
We now analyze the coefficient of $G^2$ in above inequality. In the region $-1\leqslant\psi\leqslant0$, for $\eta=\qe(\psi+1)$, one can get
\begin{align}\nonumber
&(\psi+1)^2\Big(\UB\UB_{\xi\psi\psi}+\VB\UB_{\psi\psi\psi}-\UB_\psi\VB_{\psi\psi}-\UB_\xi\UB_{\psi\psi}+\frac{J_\xi}{J}\big(\UB_\psi^2+\UB\UB_{\psi\psi}\big)-\e\UB_{\psi\psi\psi\psi}\Big)\\\nonumber
=&\eta^2\Big(\UB\UB_{\xi\eta\eta}+\qe\VB\UB_{\eta\eta\eta}-\qe\UB_\eta\VB_{\eta\eta}-\UB_\xi\UB_{\eta\eta}+\frac{J_\xi}{J}\big(\UB_\eta^2+\UB\UB_{\eta\eta}\big)-Q\UB_{\eta\eta\eta\eta}\Big)\\\nonumber
=&Q^2\eta^2\Big(u_pu_{\xi\eta\eta}+v_pu_{p\eta\eta\eta}-u_{p\eta}v_{p\eta\eta}-u_{p\xi}u_{p\eta\eta}+\frac{J_\xi}{J}\big(u_{p\eta}^2+u_pu_{p\eta\eta}\big)-u_{p\eta\eta\eta\eta}\Big)\\\nonumber
&+\MO(\eq Q^2)\\\nonumber
=&Q^2\eta^2\Big(u_pu_{\xi\eta\eta}+v_pu_{p\eta\eta\eta}-u_{p\eta}v_{p\eta\eta}-u_{p\xi}u_{p\eta\eta}+\frac{J_\xi}{J}\Big|_{\psi=-1}\big(u_{p\eta}^2+u_pu_{p\eta\eta}\big)-u_{p\eta\eta\eta\eta}\Big)\\\nonumber
&+Q^2\eta^2\Big(\frac{J_\xi}{J}-\frac{J_\xi}{J}\Big|_{\psi=-1}\Big)\big(u_{p\eta}^2+u_pu_{p\eta\eta}\big)+\MO(\eq Q^2)\\\nonumber
=&Q^2\eta^2\Big(u_pu_{\xi\eta\eta}+v_pu_{p\eta\eta\eta}+u_{p\eta}u_{p\xi\eta}+v_{p\eta}u_{p\eta\eta}+\frac{J_\xi}{J}\Big|_{\psi=-1}\big(u_{p\eta}^2+u_pu_{p\eta\eta}\big)-u_{p\eta\eta\eta\eta}\Big)\\\nonumber
&+\MO\Big(Q^2\eta^2(\psi+1)\big|u_{p\eta}^2+u_pu_{p\eta\eta}\big|\Big)+\MO\Big(\eq Q^2\Big)\\\nonumber
=&Q^2\eta^2\Big(u_pu_{\xi}+v_pu_{p\eta}+\frac{1}{2}\frac{J_\xi}{J}\Big|_{\psi=-1}u^2_p-u_{p\eta\eta}\Big)_{\eta\eta}+\MO\Big(\eq Q^2\eta^3\big|u_{p\eta}^2+u_pu_{p\eta\eta}\big|\Big)\\\nonumber
&+\MO\Big(\eq Q^2\Big)\\\nonumber
=&\MO\Big(\eq Q^2\Big),
\end{align}
since $[u_p,v_p]$ solves to the Prandtl problem (\ref{PE}). In the whole region $-1\leqslant\psi\leqslant 1$, one has
\begin{align}\nonumber
&(\psi+1)^2(1-\psi)^2\Big(\UB\UB_{\xi\psi\psi}+\VB\UB_{\psi\psi\psi}-\UB_\psi\VB_{\psi\psi}-\UB_\xi\UB_{\psi\psi}+\frac{J_\xi}{J}\big(\UB_\psi^2+\UB\UB_{\psi\psi}\big)-\e\UB_{\psi\psi\psi\psi}\Big)\\\nonumber
=&\MO\Big(\eq Q^2\Big),
\end{align}
It thus holds that
\begin{align}\label{Xin2}
\begin{aligned}
&\frac{1}{2}\Bz \UB\UB_{\xi\psi\psi}+\VB\UB_{\psi\psi\psi}-\UB_\psi\VB_{\psi\psi}-\UB_\xi\UB_{\psi\psi}+\frac{J_\xi}{J}\big(\UB_\psi^2+\UB\UB_{\psi\psi}\big)-\e\UB_{\psi\psi\psi\psi},G^2\By\\
=&\MO\Big(\eq Q^2\Big\|\frac{G}{(\psi+1)(1-\psi)}\Big\|^2\Big)=\MO\Big(\eq Q^2\big\|G_\psi\big\|^2\Big).
\end{aligned}
\end{align}
Next one can show that the coefficients of $G^2_\xi$ and $G^2_\psi$ in (\ref{Xin1}) are strictly positive. Indeed, in the region $-1\leqslant\psi\leqslant0$, 
\begin{align}\nonumber
&\frac{1}{2}\UB\VB_\psi-\frac{1}{2}\VB\UB_\psi-\frac{J_\xi}{J}\UB^2+\e\UB_{\psi\psi}\\\nonumber
=&\frac{1}{2}\qe\UB\VB_\eta-\frac{1}{2}\qe\VB\UB_\eta-\frac{J_\xi}{J}\UB^2+Q\UB_{\eta\eta}\\\nonumber
=&Q^2\Big(\frac{1}{2}u_pv_{p\eta}-\frac{1}{2}v_pu_{p\eta}-\frac{J_\xi}{J}u_p^2+u_{p\eta\eta}\Big)+\MO\Big(\eq Q^2\Big)\\\nonumber
=&Q^2\Big(-\frac{1}{2}u_pu_{p\xi}-\frac{1}{2}v_pu_{p\eta}-\frac{J_\xi}{J}u_p^2+u_{p\eta\eta}\Big)+\MO\Big(\eq Q^2\Big).
\end{align}
Due to the Prandtl equation, it holds that  
$$-u_pu_{p\xi}-v_pu_{p\eta}=\frac{1}{2}\frac{J_\xi(\xi,-1)}{J(\xi,-1)}u^2_p-u_{p\eta\eta}-\frac{1}{8}\frac{J_\xi(\xi,-1)}{J(\xi,-1)}.$$
Thus, it follows that for $-1\leqslant\psi\leqslant0$,
\begin{align}\nonumber
&\frac{1}{2}\UB\VB_\psi-\frac{1}{2}\VB\UB_\psi-\frac{J_\xi}{J}\UB^2+\e\UB_{\psi\psi}\\\nonumber
=&Q^2\Big(\frac{1}{4}\frac{J_\xi}{J}\Big|_{\psi=-1}u^2_p-\frac{1}{16}\frac{J_\xi}{J}\Big|_{\psi=-1}-\frac{J_\xi}{J}u_p^2+\frac{1}{2}u_{p\eta\eta}\Big)+\MO\Big(\eq Q^2\Big).
\end{align}
For $\eta=\qe(\psi+1)\leqslant \eta_0$, since $-\frac{J_\xi}{J}\geqslant 2\mu>0$ and $u_{p\eta\eta}\geqslant0$, one can obtain
\begin{align}\nonumber
&\frac{1}{4}\frac{J_\xi}{J}\Big|_{\psi=-1}u^2_p-\frac{1}{16}\frac{J_\xi}{J}\Big|_{\psi=-1}-\frac{J_\xi}{J}u_p^2+\frac{1}{2}u_{p\eta\eta}\\\nonumber
\geqslant&\frac{1}{4}\frac{J_\xi}{J}\Big|_{\psi=-1}u^2_p-\frac{1}{16}\frac{J_\xi}{J}\Big|_{\psi=-1}-\frac{1}{4}\frac{J_\xi}{J}u_p^2\\\nonumber
\geqslant&-C(\psi+1)+\frac{\mu}{8}\\\nonumber
\geqslant&-C\eq+\frac{\mu}{8},
\end{align}
while for $\eta\geqslant \eta_0$, since $-\frac{1}{2}\leqslant u_p(\xi,\eta)\leqslant-m_0\eta_0<0$, it holds that
\begin{align}\nonumber
&\frac{1}{4}\frac{J_\xi}{J}\Big|_{\psi=-1}u^2_p-\frac{1}{16}\frac{J_\xi}{J}\Big|_{\psi=-1}-\frac{J_\xi}{J}u_p^2+\frac{1}{2}u_{p\eta\eta}\\\nonumber
\geqslant&\frac{1}{4}\frac{J_\xi}{J}\Big|_{\psi=-1}u^2_p-\frac{1}{16}\frac{J_\xi}{J}\Big|_{\psi=-1}-\frac{J_\xi}{J}u_p^2\\\nonumber
\geqslant&2\mu u_p^2(\xi,\eta_0)\\\nonumber
\geqslant&2\mu m_0^2\eta_0^2.
\end{align}
In the whole region $-1\leqslant \psi\leqslant1$, one can obtain
\begin{align}\nonumber
&\frac{1}{2}\UB\VB_\psi-\frac{1}{2}\VB\UB_\psi-\frac{J_\xi}{J}\UB^2+\e\UB_{\psi\psi}\geqslant Q^2c^*-C\eq Q^2,
\end{align}
where $c^*=\min\big\{\frac{\mu}{8},2\mu m^2_0\eta^2_0\big\}$. Thus, the following estimate holds
\begin{align}\label{positivity1}
&\Bz\frac{1}{2}\UB\VB_\psi-\frac{1}{2}\VB\UB_\psi-\frac{J_\xi}{J}\UB^2+\e\UB_{\psi\psi}, G_\xi^2\By\geqslant Q^2c^*\big\|G_\xi\big\|^2-C\eq Q^2\|\nabla G\big\|^2.
\end{align}
Similarly, in the region $-1\leqslant\psi\leqslant0$, one has
\begin{align}\nonumber
&\frac{3}{2}\UB\VB_\psi-\frac{3}{2}\VB\UB_\psi-\frac{J_\xi}{J}\UB^2+2\e\UB_{\psi\psi}\\\nonumber
=&Q^2\Big(-\frac{3}{2}u_pu_{p\xi}-\frac{3}{2}v_pu_{p\eta}-\frac{J_\xi}{J}u_p^2+2u_{p\eta\eta}\Big)+\MO\Big(\eq Q^2\Big)\\\nonumber
=&Q^2\Big(\frac{3}{4}\frac{J_\xi}{J}\Big|_{\psi=-1}u^2_p-\frac{3}{16}\frac{J_\xi}{J}\Big|_{\psi=-1}-\frac{J_\xi}{J}u_p^2+\frac{1}{2}u_{p\eta\eta}\Big)+\MO\Big(\eq Q^2\Big).
\end{align}
For $\qe(\psi+1)\leqslant \eta_0$, it holds that
\begin{align}\nonumber
&\frac{3}{4}\frac{J_\xi}{J}\Big|_{\psi=-1}u^2_p-\frac{3}{16}\frac{J_\xi}{J}\Big|_{\psi=-1}-\frac{J_\xi}{J}u_p^2+\frac{1}{2}u_{p\eta\eta}\\\nonumber
\geqslant&\frac{3}{4}\frac{J_\xi}{J}\Big|_{\psi=-1}u^2_p-\frac{3}{16}\frac{J_\xi}{J}\Big|_{\psi=-1}-\frac{3}{4}\frac{J_\xi}{J}u_p^2\\\nonumber
\geqslant&-C\eq+\frac{3\mu}{8}\\\nonumber
\geqslant& -C\eq+c^*,
\end{align}
while for $\qe(\psi+1)\geqslant \eta_0$, one has
\begin{align}\nonumber
&\frac{3}{4}\frac{J_\xi}{J}\Big|_{\psi=-1}u^2_p-\frac{3}{16}\frac{J_\xi}{J}\Big|_{\psi=-1}-\frac{J_\xi}{J}u_p^2+\frac{1}{2}u_{p\eta\eta}\\\nonumber
\geqslant&\frac{3}{4}\frac{J_\xi}{J}\Big|_{\psi=-1}u^2_p-\frac{3}{16}\frac{J_\xi}{J}\Big|_{\psi=-1}-\frac{J_\xi}{J}u_p^2\\\nonumber
\geqslant&2\mu u_p^2(\xi,\eta_0)\\\nonumber
\geqslant&c^*.
\end{align}
Consequently,
\begin{align}\label{positivity2}
&\Bz\frac{3}{2}\UB\VB_\psi-\frac{3}{2}\VB\UB_\psi-\frac{J_\xi}{J}\UB^2+2\e\UB_{\psi\psi}, G_\psi^2\By\geqslant Q^2c^*\big\|G_\psi\big\|^2-C\eq Q^2\|\nabla G\big\|^2.
\end{align}
Finally, we conclude from (\ref{Xin1})-(\ref{positivity2}) that
\begin{align}\nonumber
&I_1+I_2+I_3+I_4+I_5\\\nonumber
\geqslant &-\e\Bz\UB,G^2_{\xi\xi}+2G^2_{\xi\psi}+G^2_{\psi\psi}\By+Q^2c^*\big\|G_\xi\big\|^2+Q^2c^*\big\|G_\psi\big\|^2\\\nonumber
&-C\eq Q^2\big\|\nabla G\big\|^2.
\end{align}
Since $-\UB$ is non-negative and $\eq$ is small enough, the following estimate holds for $G$:
\begin{align}\nonumber
&-\e\Bz\UB,G^2_{\xi\xi}+2G^2_{\xi\psi}+G^2_{\psi\psi}\By+Q^2\big\|G_\xi\big\|^2+Q^2\big\|G_\psi\big\|^2\leqslant \frac{C}{Q^2}\big\|F\big\|^2.
\end{align}
Since $\Pi=\UB G$, the estimate (\ref{KeyPie}) follows from the Hardy inequality. 
\end{proof}
\begin{remark}
In term of the linearized equation of (\ref{NPie}), the multiplier in the proof above is $-\frac{1}{J^2}\frac{\Pi}{\UB}$. The weight $\frac{1}{J^2}$ helps to gain the uniform positivity (see (\ref{positivity1}) and (\ref{positivity2})), and cancel both the terms caused by the non-shear Euler flow ( $\Delta_{\xi,\psi}\big[\frac{J_\xi}{J}\big]=0$ in the estimate of $I_1$) and the terms due to the normal derivatives of the Prandtl solution near the boundaries (see (\ref{Xin2})).
\end{remark}
{\bf Proof of Theorem \ref{main}:}

To this end, we use the method of the contraction mapping again. Define 
 \begin{align}\label{PiUpe}
\begin{aligned}
\big\|\Pi\big\|_{\Y}=&\big\|\nabla_{\xi,\psi}\Pi\big\|+\eq\big\|\nabla_{\xi,\psi}^2\Pi\big\|+\Big(\frac{\e}{Q}\Big)^\frac{5}{4}\big\|\nabla^3_{\xi,\psi}\Pi\big\|\\
&+\Big(\frac{\e}{Q}\Big)^2\big\|\nabla_{\xi,\psi}^4\Pi\big\|.
\end{aligned}
\end{align}
Define $\TT:H^4\big(\mathbb{R}\times(-1,1)\big)\rightarrow H^4\big(\mathbb{R}\times(-1,1)\big)$ by $\TT(\Pi)=\Psi$, where $\Psi$ solves the following problem:
\begin{equation}\label{Pieup}
\left\{
\begin{aligned}
&J\bar{\Phi}_{s\psi}\p_\xi\big(J\Delta_{\xi,\psi}\Psi\big)+J\Psi_{\psi}\p_\xi\big(J\Delta_{\xi,\psi}\bar{\Phi}_s\big)-J\bar{\Phi}_{s\xi}\p_\psi\big(J\Delta_{\xi,\psi}\Psi\big)-J\Psi_\xi\p_\psi\big(J\Delta_{\xi,\psi}\bar{\Phi}_s\big)\\
&+\e J\Delta_{\xi,\psi}\big(J\Delta_{\xi,\psi}\Psi\big)=-J^2\bar{R}_s-J^2N(\Pi), \\
&\Psi|_{\psi=1}=\Psi|_{\psi=-1}=0,\\
&\Psi_\psi|_{\psi=1}=\Psi_\psi|_{\psi=-1}=0,
\end{aligned}
\right.
\end{equation}
where 
$$N(\Pi)=\frac{1}{J}\Pi_\psi\p_\xi\big(J\Delta_{\xi,\psi}\Pi\big)-\frac{1}{J}\Pi_\xi\p_\psi\big(J\Delta_{\xi,\psi}\Pi\big).$$ 
Let 
$$B=\bigg\{\Pi\in H^4\big(\mathbb{R}\times(-1,1)\big):\big\|\Pi\big\|_{\Y}\leqslant \Big(\eq\Big)^{n-1}Q\bigg\}.$$
We will show that $\TT$ is a contractive mapping in $B$ if $\big\|\bar{R}_s\big\|\leqslant C\Big(\eq\Big)^{n}Q^2.$

Set $F=-\bar{R}_s-N(\Pi)$. It follows from Proposition \ref{Keyp} that
$$\big\|\nabla_{\xi,\psi}\Psi\big\|+\eq\big\|\nabla^2_{\xi,\psi}\Psi\big\|\leqslant \frac{C}{Q}\big\|F\big\|.$$
Rewrite (\ref{Pieup}) as
\begin{equation}\nonumber
\left\{
\begin{aligned}
&\Delta_{\xi,\psi}^2 \Psi=\tilde{F}, \\
&\Psi|_{\psi=1}=\Psi|_{\psi=-1}=0,\\
&\Psi_\psi|_{\psi=1}=\Psi_\psi|_{\psi=-1}=0,
\end{aligned}
\right.
\end{equation}
where
\begin{align}\nonumber
\tilde{F}=&\frac{F}{\e}-\frac{1}{\e J}\bar{\Phi}_{s\psi}\p_\xi\big(J\Delta_{\xi,\psi}\Psi\big)-\frac{1}{\e J}\Psi_{\psi}\p_\xi\big(J\Delta_{\xi,\psi}\bar{\Phi}_s\big)+\frac{1}{\e J}\bar{\Phi}_{s\xi}\p_\psi\big(J\Delta_{\xi,\psi}\Psi\big)\\\nonumber
&+\frac{1}{\e J}\Psi_\xi\p_\psi\big(J\Delta_{\xi,\psi}\bar{\Phi}_s\big)- \frac{2}{J}\nabla_{\xi,\psi}J\cdot\nabla_{\xi,\psi}\Delta_{\xi,\psi}\Psi-\frac{1}{J}\Delta_{\xi,\psi}J\Delta_{\xi,\psi}\Psi.
\end{align}
By the standard theory for fourth-order elliptic equations, one has
$$\big\|\Psi\big\|_{H^4}\leqslant C\big\|\tilde{F}\big\|.$$
Thus, it holds that
\begin{align}\nonumber
\big\|\nabla^4_{\xi,\psi}\Psi\big\|\leqslant&C\Big \|\frac{F}{\e}\Big\|+C\Big\|\frac{1}{\e J}\bar{\Phi}_{s\psi}\p_\xi\big(J\Delta_{\xi,\psi}\Psi\big)\Big\|+C\Big\|\frac{1}{\e J}\Psi_{\psi}\p_\xi\big(J\Delta_{\xi,\psi}\bar{\Phi}_s\big)\Big\|\\\nonumber
&+C\Big\|\frac{1}{\e J}\bar{\Phi}_{s\xi}\p_\psi\big(J\Delta_{\xi,\psi}\Psi\big)\Big\|+C\Big\|\frac{1}{\e J}\Psi_\xi\p_\psi\big(J\Delta_{\xi,\psi}\bar{\Phi}_s\big)\Big\|\\\nonumber
&+C\Big\| \frac{1}{J}\nabla_{\xi,\psi}J\cdot\nabla_{\xi,\psi}\Delta_{\xi,\psi}\Psi\Big\|+C\Big\|\frac{1}{J}\Delta_{\xi,\psi}J\Delta_{\xi,\psi}\Psi\Big\|\\\nonumber
\leqslant&\frac{C}{\e}\big \|F\big\|+C\Big(\frac{\e}{Q}\Big)^{-1}\big\|\nabla^3_{\xi,\psi}\Psi\big\|+C\Big(\frac{\e}{Q}\Big)^{-1}\big\|\nabla^2_{\xi,\psi}\Psi\big\|+C\Big(\frac{\e}{Q}\Big)^{-2}\big\|\nabla_{\xi,\psi}\Psi\big\|\\\nonumber
\leqslant&\frac{C}{Q}\Big(\frac{\e}{Q}\Big)^{-1}\big \|F\big\|+C\Big(\frac{\e}{Q}\Big)^{-1}\big\|\nabla^3_{\xi,\psi}\Psi\big\|+\frac{C}{Q}\Big(\frac{\e}{Q}\Big)^{-\frac{3}{2}}\big\|F\big\|+\frac{C}{Q}\Big(\frac{\e}{Q}\Big)^{-2}\big\|F\big\|\\\nonumber
\leqslant &\frac{C}{Q}\Big(\frac{\e}{Q}\Big)^{-2}\big\|F\big\|+C\Big(\frac{\e}{Q}\Big)^{-1}\big\|\nabla^3_{\xi,\psi}\Psi\big\|.
\end{align}
The Gagliardo-Nirenberg inequality implies
 \begin{align}\nonumber
\begin{aligned}
\big\|\nabla^3_{\xi,\psi}\Psi\big\|\leqslant C\big\|\nabla^2_{\xi,\psi}\Psi\big\|+C\big\|\nabla^4_{\xi,\psi}\Psi\big\|^\frac{1}{2}\big\|\nabla^2_{\xi,\psi}\Psi\big\|^\frac{1}{2}.
\end{aligned}
\end{align}
Hence one gets 
\begin{align}\nonumber
\big\|\nabla^4_{\xi,\psi}\Psi\big\|\leqslant&\frac{C}{Q}\Big(\frac{\e}{Q}\Big)^{-2}\big\|F\big\|+C\Big(\frac{\e}{Q}\Big)^{-1}\big\|\nabla^3_{\xi,\psi}\Psi\big\|+C\Big(\frac{\e}{Q}\Big)^{-1}\big\|\nabla^4_{\xi,\psi}\Psi\big\|^\frac{1}{2}\big\|\nabla^2_{\xi,\psi}\Psi\big\|^\frac{1}{2}\\\nonumber
\leqslant&\frac{C}{Q}\Big(\frac{\e}{Q}\Big)^{-2}\big\|F\big\|+C\Big(\frac{\e}{Q}\Big)^{-1}\big\|\nabla^3_{\xi,\psi}\Psi\big\|+C\Big(\frac{\e}{Q}\Big)^{-2}\big\|\nabla^2_{\xi,\psi}\Psi\big\|\\\nonumber
\leqslant&\frac{C}{Q}\Big(\frac{\e}{Q}\Big)^{-2}\big\|F\big\|.
\end{align}
Applying the Gagliardo-Nirenberg inequality again yields
 \begin{align}\nonumber
\begin{aligned}
\big\|\nabla^3_{\xi,\psi}\Psi\big\|\leqslant &C\big\|\nabla^2_{\xi,\psi}\Psi\big\|+C\big\|\nabla^4_{\xi,\psi}\Psi\big\|^\frac{1}{2}\big\|\nabla^2_{\xi,\psi}\Psi\big\|^\frac{1}{2}\leqslant\frac{C}{Q}\Big(\frac{\e}{Q}\Big)^{-\frac{5}{4}}\big\|F\big\|.
\end{aligned}
\end{align}
Therefore
$$\big\|\Psi\big\|_{\Y}\leqslant\frac{C}{Q}\big\|F\big\|.$$
It is easy to check that
 \begin{align}\nonumber
\begin{aligned}
\big\|N(\Pi)\big\|\leqslant& C\big\|\nabla_{\xi,\psi}\Pi\big\|_{\infty}\big\|\nabla^3_{\xi,\psi}\Pi\big\|\\
 \leqslant &C\Big(\big\|\nabla_{\xi,\psi}\Pi\big\|+\big\|\nabla_{\xi,\psi}\Pi\big\|^\frac{1}{4}\big\|\nabla_{\xi,\psi}\Pi\big\|^\frac{1}{2}\big\|\nabla^3_{\xi,\psi}\Pi\big\|^\frac{1}{4}\Big)\big\|\nabla^3_{\xi,\psi}\Pi\big\| \\
\leqslant& C\Big(\frac{\e}{Q}\Big)^{-\frac{5}{2}}\big\|\Pi\big\|_{\Y}^2.
\end{aligned}
\end{align}
For $n=12$, if $\Pi\in B$, then it holds that
 \begin{align}\nonumber
\begin{aligned}
\big\|\Psi\big\|_{\Y}\leqslant &\frac{C}{Q}\big\|\bar{R}_s\big\|+\frac{C}{Q}\big\||N(\Pi)\big\|\\
\leqslant &\frac{C}{Q}\big\|\bar{R}_s\big\|+\frac{C}{Q}\Big(\frac{\e}{Q}\Big)^{-\frac{5}{2}}\big\|\Xi\big\|_{\X}^2\\
\leqslant&C\Big(\eq\Big)^{n}Q+C\Big(\eq\Big)^{2n-12}Q\\
\leqslant&\Big(\eq\Big)^{n-1}Q,
\end{aligned}
\end{align}
provided that $\eq$ is small enough. Thus, $\TT(B)\subset B$. If $\Pi_1$, $\Pi_2\in B$, then
\begin{align}\nonumber
\begin{aligned}
\big\|\TT(\Pi_1-\Pi_2)\big\|_{\Y}\leqslant &C\big\|N(\Pi_1)-N(\Pi_2)\big\|\\
\leqslant&C\big\|\nabla_{\xi,\psi}\Pi_1\big\|_{\infty}\big\|\nabla^2_{\xi,\psi}\big(\Pi_1-\Pi_2\big)\big\|\\
 &+C\big\|\nabla_{\xi,\psi}\big(\Pi_1-\Pi_2\big)\big\|_{\infty}\big\|\nabla^2_{\xi,\psi}\Pi_2\big\|\\
\leqslant& \frac{C}{Q}\Big(\frac{\e}{Q}\Big)^{-\frac{5}{2}}\Big(\big\|\Pi_1\big\|_{\X}+\big\|\Pi_2\big\|_{\X}\Big)\big\|\Pi_{1}-\Pi_{2}\big\|_{\X}\\
\leqslant& C\Big(\eq\Big)^{n-11}\big\|\Pi_{1}-\Pi_{2}\big\|_{\Y},
\end{aligned}
\end{align}
hence $\TT$ is contractive on $B$ for $\eq$ small enough. Thus, there exists a $\Pi\in B$ such that $\TT(\Pi)=\Pi$. The problem (\ref{NPie}) admits a solution $\Pi\in B$ satisfying 
\begin{align}\nonumber
\begin{aligned}
&\big\|\nabla_{\xi,\psi}\Pi\big\|_{\infty}\leqslant C\Big(\frac{\e}{Q}\Big)^{-\frac{5}{4}}\big\|\Pi\big\|_{\Y}\leqslant C\Big(\eq\Big)^{n-1-\frac{5}{2}}Q\leqslant C\eq Q.\\
\end{aligned}
\end{align}
Thus, $\Phi=\bar{\Phi}_s+\Pi$ is a solution to the Navier-Stokes problem (\ref{NSP}), which satisfies
\begin{align}\nonumber
\begin{aligned}
&\big\|-\Phi_\psi-\UB\big\|_\infty\leqslant C\eq Q,\\
&\big\|\Phi_\xi-\VB\big\|_\infty\leqslant C\eq Q.
\end{aligned}
\end{align}
Moreover, since $\Pi\in H^4\big(\mathbb{R}\times(-1,1)\big)$, one has the asymptotic behaviors
\begin{align}\nonumber
\begin{aligned}
\underset{\xi\rightarrow\pm\infty}{\lim}\big[-\Pi_\psi,\Pi_\xi\big]=0,\text{ for $\psi\in[-1,1]$ uniformlly.}
\end{aligned}
\end{align}
Thus, we have shown that
\begin{align}\nonumber
\begin{aligned}
\underset{\xi\rightarrow\pm\infty}{\lim}\big[-\Psi_\psi,\Phi_\xi\big]=\underset{\xi\rightarrow\pm\infty}{\lim}\big[-\bar{\Phi}_{s\psi},\bar{\Phi}_{s\xi}\big]=[A^{\pm},0],\text{ for $\psi\in[-1,1]$ uniformlly.}
\end{aligned}
\end{align}
So the proof of Theorem \ref{main} is completed.
\qed

\section{Appendix}
{\bf Appendix A: A conformal isomorphism from $\Omega$ to $(-\infty,\infty)\times(-1,1)$}

In section \ref{Euler-section}, we have shown that $\psi$ satifies the following estimates:
\begin{align}\label{NEFE-in-Appendix}
\begin{aligned}
&\Big|r^k\p_r^k\p_\te^l\big[\psi(r,\te)-\frac{\te}{\beta}\big]\Big|\leqslant\frac{C_{k,l}(\lambda)}{r^\frac{1}{4}},\text{   for }r\geqslant1,\\
&\Big|r^k\p_r^k\p_\te^l\big[\psi(r,\te)-\frac{\te}{\al}\big]\Big|\leqslant C_{k,l}(\lambda)r^\frac{1}{4},\text{   for }r\leqslant1,
\end{aligned}
\end{align}
and 
\begin{align}\label{lob-in-Appendix}
\underset{\Omega}{\inf}\hspace{1mm}\psi_\te\geqslant b>0.
\end{align}
Define 
$$\xi=\frac{1}{\beta}\ln{r}-\int_{r}^\infty\frac{1}{\rho}\big(\psi_\te(\rho,\te)-\frac{1}{\beta}\big)\dd\rho.$$ 
Since $\psi$ is a harmonic function, one has that 
$$\p_r\xi=\frac{\p_\te\psi}{r}, \quad \frac{\p_\te\xi}{r}=-\p_r\psi.
$$ 
It follows from (\ref{NEFE-in-Appendix}) that
\begin{align}\label{xib-in-Appendix}
\xi\sim
\left\{
\begin{aligned}
&\frac{1}{\beta}\ln{r}, \quad r\rightarrow \infty,\\
&\frac{1}{\al}\ln{r}, \quad r\rightarrow0.
\end{aligned}
\right.
\end{align}
\begin{lemma}If $\Gamma^\pm$ are assume to be smooth curves satisfying (\ref{CUR}) and (\ref{CURC}). Then the mapping $(\xi,\psi):\Omega\rightarrow (-\infty,\infty)\times(-1,1)$ is a conformal isomorphism.
\end{lemma}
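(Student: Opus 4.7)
The plan is to recast the problem in complex-analytic terms. Setting $t:=\ln r$, $w:=t+i\te$, and $D:=\{(t,\te)\,:\,t\in\mathbb{R},\ |\te|<g(e^t)\}$, I introduce
\[
\Phi(w):=\xi(e^t,\te)+i\,\psi(e^t,\te),\qquad S:=\mathbb{R}+i(-1,1).
\]
The strategy is to show that $\Phi$ is a proper holomorphic local biholomorphism $D\to S$, and then to exploit the fact that $D$ and $S$ are simply connected planar domains to upgrade $\Phi$ to a global biholomorphism. Translated back to $(r,\te)$, this is precisely the stated conformal isomorphism.

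First I would verify that $\xi$ is well-defined and check the Cauchy--Riemann relations. The decay (\ref{NEFE-in-Appendix}) with $k=l=1$ bounds the integrand in the definition of $\xi$ by $C\rho^{-1-1/4}$ for $\rho\geqslant 1$, uniformly in $\te$, so the integral converges absolutely. Differentiating directly gives $\p_r\xi=\psi_\te/r$. For $\p_\te\xi$ I differentiate under the integral, use $\Delta\psi=0$ in polar form to rewrite $\psi_{\te\te}/\rho=-\p_\rho(\rho\psi_\rho)$, integrate by parts in $\rho$, and invoke the boundary decay $\rho\psi_\rho\to0$ from (\ref{NEFE-in-Appendix}) to obtain $\p_\te\xi=-r\psi_r$. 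In the variable $t=\ln r$ these become the Cauchy--Riemann equations $\xi_t=\psi_\te$, $\xi_\te=-\psi_t$, so $\Phi$ is holomorphic on $D$, with $|\Phi'(w)|^2=\psi_\te^2+\psi_t^2=r^2J$. By (\ref{lob-in-Appendix}) and the positivity of $J$ on $\Omega$, $|\Phi'|>0$ throughout $D$, so $\Phi$ is a local biholomorphism.

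The key step is properness. On the lateral boundary $\te=\pm g(e^t)$ one has $\psi=\pm 1$, so any sequence $w_n\in D$ approaching this boundary satisfies $\mathrm{Im}\,\Phi(w_n)\to\pm1$, i.e.\ $\Phi(w_n)\to\p S$. For sequences with $t_n\to\pm\infty$, the asymptotic (\ref{xib-in-Appendix}) combined with uniform-in-$\te$ control on the tail integral in the definition of $\xi$ (split at $\rho=1$ and estimated on each piece by the corresponding half of (\ref{NEFE-in-Appendix})) gives $\mathrm{Re}\,\Phi(w_n)\to\pm\infty$ uniformly in $\te$. Hence preimages of compacta in $S$ are compact in $D$, so $\Phi$ is proper. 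A proper local biholomorphism between connected Riemann surfaces is a finite-sheeted holomorphic covering, and since $S$ is simply connected the covering must be one-sheeted; thus $\Phi$ is a global biholomorphism $D\to S$, and the lemma follows.

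The step I expect to be trickiest is the uniformity in $\te$ of $\xi\to\pm\infty$ at both ends; without it the map could fail to be proper and the one-sheeted covering argument would collapse. The saving grace is that the bounds in (\ref{NEFE-in-Appendix}) are already uniform in $\te$, so the needed uniform coercivity of $\xi$ reduces to a routine tail estimate on the integral in its definition, split at $\rho=1$ to handle the infinity and the origin separately.
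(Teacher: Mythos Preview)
Your argument is correct and takes a genuinely different route from the paper. The paper proves bijectivity by hand: it picks the arc $\{r=1\}$, parametrises the level sets of $\psi$ by the integral curves of $\nabla\xi$ through points of that arc, and then uses the ODE estimate $\dot r_c\geqslant b/r_c$ (from $\psi_\te\geqslant b$) to show each curve runs from the vertex to infinity with $\xi$ strictly increasing from $-\infty$ to $+\infty$; surjectivity and injectivity then follow directly. Your approach instead packages the same analytic inputs---Cauchy--Riemann, $\psi_\te\geqslant b$, the uniform coercivity of $\xi$ at both ends from (\ref{NEFE-in-Appendix}), and continuity of $\psi$ up to $\Gamma^\pm$---into a properness statement, and then finishes with the topological fact that a proper local biholomorphism onto the simply connected strip $S$ with connected source is a global biholomorphism. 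This is cleaner and avoids the ODE analysis entirely; the paper's method, on the other hand, is more constructive and yields explicit information about the foliation of $\Omega$ by the level curves $\Gamma_c=\{\psi=c\}$. One small point worth making explicit in your write-up is that $D$ is connected (immediate since $\Omega$ is a domain), which is what forces the covering to be one-sheeted.
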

\begin{proof}
It suffices to show that $(\xi,\psi)$ is a bijection. Let $\bar{\mathcal{C}}_1=\big\{(r,\te)\big|r=1,|\te|\leqslant g(1)\big\}$. Since $\psi|_{r=1,\te=\pm g(1)}=\pm 1$, $\psi_\te\geqslant b>0$, and $\psi:\bar{\mathcal{C}}_1\rightarrow[-1,1]$ is a bijection. For any $c\in[-1,1]$, there exists a unique $\te_c\in\big[-g(1),g(1)\big]$, such that $\psi|_{r=1,\te=\te_c}=c$. Let $(x,y)=(r\cos\te,r\sin\te)$ be the Cartesian coordinate and $(x_c(t),y_c(t))$ be the characteristic line defined by
\begin{equation}\label{ODEcl}
\left\{
\begin{aligned}
&\frac{\dd x_c}{\dd t}=\xi_x(x_c(t),y_c(t)),\\
&\frac{\dd y_c}{\dd t}=\xi_y(x_c(t),y_c(t)),\\
&\big(x_c(0),y_c(0)\big)=\big(\cos(\te_c),\sin(\te_c)\big).
\end{aligned}
\right.
\end{equation}
Since $\xi_x=\psi_y$, $\xi_y=-\psi_x$, $\xi_x^2+\xi_y^2>0$ and $\psi|_{\Gamma^\pm}=\pm1$, it is easy to check that $\Gamma^\pm$ is the integral curve of $c=\pm1$. Set $r_c(t):=\sqrt{x_c(t)^2+y_c(t)^2}$.  Since $(\xi_x,\xi_y)\cdot (\frac{x}{r},\frac{y}{r})=\xi_r=\frac{\psi_\te}{r}$, one has
$$\frac{C}{r_c}\geqslant\frac{\dd r_c}{\dd t}=\frac{\xi_x(x_c,y_c)x_c+\xi_y(x_c,y_c)y_c}{r_c}\geqslant \frac{b}{r_c}.$$
Then $2bt+1\leqslant r_c^2(t)\leqslant 2Ct+1$ for $t\geqslant0$. For any $c\in(-1,1)$, as $t$ increases,  $r_c(t)$ increases and $\big(x_c(t),y_c(t)\big)$ does not touch $\Gamma^\pm$, so the solution to (\ref{ODEcl}) exists for $t\in[0,\infty)$. Assume that the maximal existence interval of (\ref{ODEcl}) to be $(t_c,\infty)$. It can be shown that $t_c>-\infty$. Indeed, since
$$\frac{\dd r_c}{\dd t}=\frac{\xi_x(x_c,y_c)x_c+\xi_y(x_c,y_c)y_c}{r_c}\geqslant \frac{b}{r_c},$$
then $r_c^2(t)\leqslant 2bt+1$ for $t\leqslant 0$. It follows that $\big(x_c(t),y_c(t)\big)$ arrives at the origin at a finite $t<0$, thus, $t_c>-\infty$. Let
\begin{align}\nonumber
\Gamma_c=\big\{\big(x_c(t),y_c(t)\big)\big| t>t_c\big\}.
\end{align}
Since $\frac{\dd}{\dd t}\psi(x_c(t),y_c(t))=\psi_x\xi_x+\psi_y\xi_y=0$, so $\psi|_{\Gamma_c}=c$. And $\frac{\dd}{\dd t}\xi(x_c(t),y_c(t))=\xi_x^2+\xi_y^2>0$, thus $\xi(x_c(t),y_c(t))$ increases with respect to $t$. 

Now we show that $(\xi,\psi)$ is a surjection. In fact, $2bt+1\leqslant r_c^2(t)$ for $t\geqslant0$ implies that $r_c(t)$ goes to $\infty$ as $t\rightarrow\infty$. Since $\xi\sim\frac{1}{\beta}\ln r$ for $r$ large enough, it holds that $\underset{t\rightarrow\infty}{\lim}\xi(x_c(t),y_c(t))=\infty$. As $t\rightarrow t_c$, $r_c(t)$ convergences to $0$, so (\ref{xib-in-Appendix}) implies $\underset{t\rightarrow t_c}{\lim}\xi(x_c(t),y_c(t))=-\infty$. Thus, $(\xi,\psi)$ is a surjection.

If $(\xi(x,y),\psi(x,y))=(\xi(\bar{x},\bar{y}),\psi(\bar{x},\bar{y}))$, it follows from the above argument for $r_c(t)$ that the characteristic lines start from $(x,y)$ and $(\bar{x},\bar{y})$ will intersect with $\mathcal{C}_1$. Let $\bar{c}=\psi(x,y)=\psi(\bar{x},\bar{y})$. Then the two characteristic lines intersect with $\mathcal{C}_1$ at a same point $(\cos\bar{c},\sin\bar{c})$. It means that $(x,y)$ and $(\bar{x},\bar{y})$ belong to $\Gamma_{\bar{c}}$. Since $\xi(x,y)=\xi(\bar{x},\bar{y})$ and $\frac{\dd}{\dd t}\xi(x_{\bar{c}}(t),y_{\bar{c}}(t))>0$, hence $(x,y)=(\bar{x},\bar{y})=(x_{\bar{c}}(\tau),y_{\bar{c}}(\tau))$ for some $\tau$. Thus, $(\xi,\psi)$ is an injection.
\end{proof}

{\bf Appendix B:  The force term of the linearized Prandtl equation}

In this appendix, we give the precise form of the force term $F^m_b$ in the linearized Prandtl equations (\ref{brandtlm}). For simplicity, we introduce the following notations: 
\begin{align}\nonumber
&u_e^m=-\frac{1}{Q}\Phi_{e\psi}^m,\quad v_e^m=\frac{1}{Q}\Phi_{e\xi}^m,\\\nonumber
&u_e^{(m)}=\overset{m}{\underset{j=0}{\sum}}\frac{\eta^j}{j!}\p^j_\psi u_e^{m-j}|_{\psi=-1},\quad v_e^{(m)}=\overset{m+1}{\underset{j=0}{\sum}}\frac{\eta^j}{j!}\p^j_\psi v_e^{m+1-j}|_{\psi=-1},\\\nonumber
&u_p^m=u_b^m+u_e^{(m)},\quad v_p^m=v_b^m+\overset{m+1}{\underset{j=0}{\sum}}\frac{\eta^j}{j!}\p^j_\psi v_e^{m+1-j}|_{\psi=-1},\\\nonumber
&J_1^m=\eta^m\p^m_\psi\Big[\frac{J_\xi}{J}\Big]\bigg|_{\psi=-1},\quad J_2^m=\eta^m\p^m_\psi\Big[\frac{J_{\xi\xi}}{J}\Big]\bigg|_{\psi=-1},\\\nonumber
&J_3^m=\eta^m\p^m_\psi\Big[\frac{J_\psi}{J}\Big]\bigg|_{\psi=-1},\quad J_2^m=\eta^m\p^m_\psi\Big[\frac{J_{\psi\psi}}{J}\Big]\bigg|_{\psi=-1}.
\end{align}
Then the force term is 
$$F^m_b=-\int_\eta^\infty f^m_b(\xi,\eta')\dd \eta',$$
where 
\begin{align}\nonumber
f^m_b=&-\overset{m-1}{\underset{j=1}{\sum}}\Big(u_p^ju_{b\xi\eta}^{m-j}+u_b^j u_{e\xi\eta}^{(m-j)}\Big)-u_b^{0}u_{e\xi\eta}^{(m)}-u_e^{(m)}u_{b\xi\eta}^0\\\nonumber
&+\overset{m-2}{\underset{j=0}{\sum}}\Big(u_p^jv_{b\xi\xi}^{m-2-j}+u_b^jv_{e\xi\xi}^{(m-2-j)}\Big)-\underset{\underset{(j,l)\notin\{(0,m),(m,0)\}}{j+l+k=m}}{\sum}\Big(u_p^ju_{b\eta}^lJ_1^k+u_b^ju_{e\eta}^{(l)}J_1^k\Big)\\\nonumber
&-u_b^0u_{e\eta}^{(m)}J_1^0-u_e^{(m)}u_{b\eta}^0J_1^0+\underset{\underset{j,l,k\geqslant0}{j+l+k=m-2}}{\sum}\Big(u_p^jv_{b\xi}^lJ_1^k+u_b^jv_{e\xi}^{(l)}J_1^k\Big)\\\nonumber
&-\overset{m-1}{\underset{j=1}{\sum}}\Big(v_p^ju_{b\eta\eta}^{m-j}+v_b^j u_{e\eta\eta}^{(m-j)}\Big)-v_b^{(0)}u_{e\eta\eta}^{(m)}-u_{b\eta\eta}^0\overset{m+1}{\underset{j=1}{\sum}}\frac{\eta^j}{j!}\p^j_\psi v_e^{m+1-j}|_{\psi=-1}\\\nonumber
&-\overset{m-2}{\underset{j=0}{\sum}}\Big(v_p^ju_{b\xi\xi}^{m-2-j}+v_b^j u_{e\xi\xi}^{(m-2-j)}\Big)-\underset{\underset{j,l,k\geqslant0}{j+l+k=m-1}}{\sum}\Big(v_p^ju_{b\eta}^lJ_3^k+v_b^ju_{e\eta}^{(l)}J_3^k\Big)\\\nonumber
&+\overset{m-3}{\underset{j=0}{\sum}}\Big(v_p^jv_{b\xi}^{m-3-j}+v_b^j v_{e\xi}^{(m-3-j)}\Big)+2u_{b\xi\xi\eta}^{m-2}-v_{b\xi\xi\xi}^{m-4}\\\nonumber
&+2\overset{m-2}{\underset{j=0}{\sum}}J_1^ju_{b\xi\eta}^{m-2-j}-2\overset{m-4}{\underset{j=0}{\sum}}J_1^jv_{b\xi\xi}^{m-4-j}+2\overset{m-1}{\underset{j=0}{\sum}}J_3^ju_{b\eta\eta}^{m-1-j}+2\overset{m-3}{\underset{j=0}{\sum}}J_3^ju_{b\xi\xi}^{m-3-j}\\\nonumber
&+\overset{m-2}{\underset{j=0}{\sum}}\Big(J_2^j+J_4^j\Big)u_{b\eta}^{m-2-j}-\overset{m-4}{\underset{j=0}{\sum}}\Big(J_2^j+J_4^j\Big)v_{b\xi}^{m-4-j}.
\end{align}

{\bf Appendix C:  The uniqueness of non-positive solution to (\ref{INBE})}

In Section \ref{infty behavior}, we have constructed a solution to the following problem:
\begin{equation}\label{Ape-in-app}
\left\{
\begin{aligned}
&-2\beta A^+\frac{\dd A^+}{\dd \psi}-\e\Big(\frac{\dd^3 A^+}{\dd\psi^3}+4\beta^2\frac{\dd A^+}{\dd \psi}\Big)=0,\\
&A^+(-1)=A^+(1)=0, \\
&\int_{-1}^1 A^+(\psi)\dd\psi=-Q.
\end{aligned}
\right.
\end{equation}
Set $A^\infty(\te)=\beta A^+(\beta\te)$. Then $A^\infty$ solves the problem (\ref{INBE}). Based on the solution constructed in Section \ref{infty behavior}, we will show that the non-positive solution to (\ref{Ape-in-app}) is unique.
\begin{lemma}If $\eq$ is small enough and $\tilde{A}^+$ is a smooth non-positive solution to (\ref{Ape-in-app}), then $\tilde{A}^+=A^+$. 
\end{lemma}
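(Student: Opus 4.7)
The plan is to reduce the third-order ODE to a second-order one by integration and then use a constrained Poincar\'e-type estimate on the difference $w := \tilde A^+ - A^+$.

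First, I would observe that the left-hand side of the equation in (\ref{Ape-in-app}) is the exact derivative $-\p_\psi[\beta A^2 + \e A_{\psi\psi} + 4\beta^2 \e A]$, so every solution satisfies
\[
\e A_{\psi\psi} + \beta A^2 + 4\beta^2 \e A = c
\]
for some constant $c$ determined by the boundary data and flux. Subtracting this identity for $\tilde A^+$ and $A^+$ and using $\tilde A^{+2}-A^{+2} = (\tilde A^+ + A^+)w$, the difference $w$ satisfies
\[
\e w_{\psi\psi} - \beta b w + 4\beta^2 \e w = K,
\]
where $b := -(\tilde A^+ + A^+) \ge 0$ by non-positivity of both solutions, and $K := \tilde c - c$ is a constant. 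The boundary conditions $w(\pm 1) = 0$ hold, and the shared flux constraint $\int_{-1}^1 \tilde A^+ = \int_{-1}^1 A^+ = -Q$ gives $\int_{-1}^1 w\,\dd\psi = 0$.

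Next, I would multiply by $w$ and integrate over $(-1,1)$: the constant $K$ drops out thanks to $\int w = 0$, and integration by parts using $w(\pm 1) = 0$ yields
\[
\e \int_{-1}^1 w_\psi^2 \,\dd\psi + \beta \int_{-1}^1 b w^2 \,\dd\psi = 4\beta^2 \e \int_{-1}^1 w^2 \,\dd\psi.
\]
The crucial analytic input is the \emph{constrained} Poincar\'e inequality: for $w \in H^1_0(-1,1)$ with $\int w = 0$, one has $\int w^2 \le \pi^{-2} \int w_\psi^2$, since the lowest Dirichlet eigenvalue of $-\p_\psi^2$ on $(-1,1)$ admitting a mean-zero eigenfunction is $\pi^2$, realized by $\sin(\pi\psi)$ (the first eigenfunction $\cos(\pi\psi/2)$ is excluded by the mean-zero constraint). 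Combined with the standing assumption $\beta \le \pi/2$, so that $4\beta^2/\pi^2 \le 1$, this gives
\[
\e\Bigl(1 - \tfrac{4\beta^2}{\pi^2}\Bigr) \int_{-1}^1 w_\psi^2 \,\dd\psi + \beta \int_{-1}^1 b w^2 \,\dd\psi \le 0.
\]
Both terms on the left are non-negative and so must vanish. When $\beta < \pi/2$ the prefactor is strictly positive, forcing $w_\psi \equiv 0$, and $w(\pm 1) = 0$ then immediately gives $w \equiv 0$.

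The main (and only) obstacle is the borderline case $\beta = \pi/2$, where the coefficient of $\int w_\psi^2$ degenerates and one is left with $\int b w^2 = 0$. To handle this, I would invoke strict negativity of $A^+$ a.e.\ on $(-1,1)$, which follows from Proposition \ref{inftyest}: the estimate $\|A^+ - \bar A_s^+\|_{L^\infty} \le C(\e/Q)^{n+1} Q$, together with the strict negativity of $\bar A_s^+$ in the interior (established in Section \ref{high-order} and exploited in the proof of Lemma \ref{inflinest}), implies, for $\e/Q$ sufficiently small, that $A^+ < 0$ strictly on $(-1,1)$. Hence $b = |\tilde A^+| + |A^+| > 0$ almost everywhere, and $\int b w^2 = 0$ forces $w = 0$ a.e., whence $w \equiv 0$ by continuity. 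This yields $\tilde A^+ = A^+$ and completes the proof of uniqueness.
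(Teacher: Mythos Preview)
Your proof is correct and takes a genuinely different route from the paper. Both arguments begin by integrating the ODE once to the second-order relation $\beta A^2 + \e A_{\psi\psi} + 4\beta^2\e A = \text{const}$, but then diverge. The paper sets $W = (A^+ - \tilde A^+)\,\mathrm{sgn}(c-\tilde c)$, divides by $A^+-\delta$, and applies the maximum principle, using the positivity of the integration constant $c$ (obtained from Proposition~\ref{inftyest}) to make the zero-order coefficient favourable; combined with $\int W=0$ this forces $W\equiv 0$. You instead multiply the difference equation by $w$ and integrate, killing the unknown constant via $\int w=0$, and close with a constrained Poincar\'e inequality. Your method is more direct---it avoids computing the sign of $c$ altogether, and for $\beta<\pi/2$ it does not even need the strict negativity of $A^+$---but it relies on the structural hypothesis $\beta\le \pi/2$, which the paper's maximum-principle argument does not use at this step.

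One caveat on your justification of the constrained Poincar\'e inequality: the mean-zero constraint is orthogonality in $L^2$ to the constant function $1$, not to the first eigenfunction $\cos(\pi\psi/2)$, so the first mode is not literally ``excluded'' and the claimed implication is not valid in general. The inequality $\int_{-1}^1 w^2 \le \pi^{-2}\int_{-1}^1 w_\psi^2$ for $w\in H^1_0(-1,1)$ with $\int w=0$ is nonetheless true. One clean way to see it: split $w=w_{\mathrm{odd}}+w_{\mathrm{even}}$; the odd part lies in $\overline{\mathrm{span}}\{\sin(k\pi\psi):k\ge 1\}$ and so satisfies the bound with constant $\pi^2$, while for the even part with zero mean the Euler--Lagrange equation $-w_{\psi\psi}=\mu w+\nu$ with $w(\pm1)=0$ and $\int w=0$ forces $\tan\sqrt\mu=\sqrt\mu$, whose smallest positive root exceeds $\pi$. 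Hence the overall constrained infimum of the Rayleigh quotient is exactly $\pi^2$, as you need.
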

\begin{proof}
Since $A^+$ solves (\ref{Ape-in-app}), then
$$
\beta \big(A^+\big)^2+\e A^+_{\psi\psi}+4\beta\e A^+\equiv c
$$
is a constant. Similarly,
$$
\beta \big(\tilde{A}^+\big)^2+\e \tilde{A}^+_{\psi\psi}+4\beta\e \tilde{A}^+\equiv \tilde{c}.
$$
Since $A^+(-1)=0$, one gets $c=\e A^+_{\psi\psi}(-1)$. It follows from Proposition \ref{inftyest} that 
$$
c=\e A^+_{\psi\psi}(-1)\geqslant Q^2 A^+_{p\eta\eta}|_{\eta=0}-C\e Q\qe=\frac{Q^2\beta}{4}-CQ^2\eq>0,
$$
where (\ref{PEI}) has been used. Set $W=(A^+-\tilde{A}^+)\rm{sgn}(c-\tilde{c})$. Then 
$$
\beta\big(A^++\tilde{A}^+\big)W+\e W_{\psi\psi}+4\beta\e W\geqslant0,
$$
and $\int_{-1}^1 W\dd\psi=0$. Let $R=\frac{W}{A^+-\delta}$, $\delta>0$. Then $R$ satisfies 
$$
\Big(\beta\big(A^++\tilde{A}^+\big)\big(A^+-\delta\big)+\e A^+_{\psi\psi}+4\beta\e \big(A^+-\delta\big)\Big)R+\e \big(A^+-\delta\big) R_{\psi\psi}+2\e A^+_{\psi}R_\psi\geqslant0.
$$
Since $A^+$ and $\tilde{A}^+$ are non-positive, the coefficient of $R$ in the inequality above bounded below as
\begin{align}\nonumber
&\beta\big(A^++\tilde{A}^+\big)\big(A^+-\delta\big)+\e A^+_{\psi\psi}+4\beta\e \big(A^+-\delta\big)\\\nonumber
=&\beta\tilde{A}^+\big(A^+-\delta\big)-\beta A^+\delta+\beta \big(A^+\big)^2+\e A^+_{\psi\psi}+4\beta\e A^+-4\beta\e\delta\\\nonumber
\geqslant& c-4\beta\e\delta>0,
\end{align}
provided that $\delta$ is small enough. Applying the maximum principle yields $R\geqslant0$, thus $W\leqslant0$. $\int_{-1}^1 W\dd\psi=0$ implies $W=0$, namely $\tilde{A}^+=A^+$.
\end{proof}

\noindent \textbf{Acknowledgements:} 
This research is partially supported by Zheng Ge Ru Foundation, Hong Kong RGC Earmarked Research Grants CUHK-14301421, CUHK-14300917, CUHK-14302819, and CUHK-14300819, and the key project of NSFC No. 12131010. C.Gao is deeply grateful to Prof. Liqun Zhang, Prof. Linqiu Li and Dr. Chuankai Zhao for very valuable discussion.

\bibliographystyle{springer}
\bibliography{mrabbrev,literatur}
\newcommand{\noopsort}[1]{} \newcommand{\printfirst}[2]{#1}
\newcommand{\singleletter}[1]{#1} \newcommand{\switchargs}[2]{#2#1}

\end{document}